\documentclass[11pt,pdflatex]{amsart}

\usepackage{amssymb}
\usepackage{amsmath}
\usepackage{amsfonts}
\usepackage{hyperref}
\usepackage{multirow, comment}
\usepackage{mathrsfs}
\usepackage[mathscr]{euscript}

\theoremstyle{plain}
\newtheorem{theorem}{Theorem}[section]
\newtheorem{proposition}[theorem]{Proposition}
\newtheorem{corollary}[theorem]{Corollary}
\newtheorem{lemma}[theorem]{Lemma}

\theoremstyle{definition}
\newtheorem{remark}[theorem]{Remark}

\newcommand{\epf}{{\ifhmode\unskip\nobreak\hfil\penalty50 \hskip1em
\else\nobreak\fi \nobreak\mbox{}\hfil\mbox{$\square$} \parfillskip=0pt
\finalhyphendemerits=0 \par\vskip5pt}}

\newcommand{\lra}{\longrightarrow}

\newcommand{\CC}{\mathbf{C}}
\newcommand{\FF}{\mathbf{F}}

\newcommand{\QQ}{\mathbf{Q}}
\newcommand{\RR}{\mathbf{R}}
\newcommand{\ZZ}{\mathbf{Z}}

\newcommand{\ar}{\mathrm{ar}}

\newcommand{\CO}{\mathcal{O}}

\newcommand{\gp}{\mathfrak{p}}

\newcommand{\Fpbar}{{\overline{\FF}_p}}
\usepackage[usenames]{color}
\usepackage{color}
\usepackage{amssymb}
\usepackage{graphicx, epsfig}
\usepackage{latexsym, amsfonts, amscd, amsmath}
\usepackage{mathrsfs}

\makeindex \setcounter{tocdepth}{2}
\input xy
\xyoption{all}

 \voffset = -20pt \hoffset = -60pt \textwidth =
460pt \textheight =610pt \headheight = 12pt \headsep = 20pt

\definecolor{orange}{rgb}{1,0.5,0}
\definecolor{Indigo}{rgb}{0.2,0.1,0.7}
\definecolor{Violet}{rgb}{0.5,0.1,0.7}

\theoremstyle{plain}
\newtheorem{thm}[theorem]{Theorem}
\newtheorem{prop}[theorem]{Proposition}

\newtheorem{cor}[theorem]{Corollary}

\theoremstyle{definition}
\newtheorem{dfn}[theorem]{Definition}

\theoremstyle{remark}

\numberwithin{equation}{section}
\numberwithin{figure}{section} \numberwithin{table}{subsection}

\newcommand{\Hom}{{\operatorname{Hom}}}

\newcommand{\Res}{{\operatorname{Res }}}

\newcommand{\Spec}{{\operatorname{Spec }}}

\newcommand{\GL}{{\operatorname{GL}}}

\newcommand{\gerd}{{\frak{d}}}

\newcommand{\gerp}{{\frak{p}}}
\newcommand{\gerq}{{\frak{q}}}

\newcommand{\calA}{{\mathcal{A}}}

\newcommand{\calC}{{\mathcal{C}}}
\newcommand{\calD}{{\mathcal{D}}}

\newcommand{\calG}{{\mathcal{G}}}
\newcommand{\calH}{{\mathcal{H}}}

\newcommand{\calL}{{\mathcal{L}}}

\newcommand{\calO}{{\mathcal{O}}}

\newcommand{\calT}{{\mathcal{T}}}

\newcommand{\calV}{{\mathcal{V}}}

\newcommand{\calZ}{{\mathcal{Z}}}

\def\AA{\mathbb{A}}
\def\BB{\mathbb{B}}
\def\CC{\mathbb{C}}

\def\FF{\mathbb{F}}
\def\GG{\mathbb{G}}

\def\PP{\mathbb{P}}
\def\QQ{\mathbb{Q}}
\def\RR{\mathbb{R}}

\def\TT{\mathbb{T}}

\def\ZZ{\mathbb{Z}}

\newcommand{\ra}{{\; \rightarrow \;}}

\newcommand{\Qpbar}{\overline{\QQ}_p}

\begin{document}

\title[Cones of Weights and Minimal Cones of Strata]{Cones of Weights and Minimal Cones of the Goren-Oort Strata in Hilbert modular varieties}

\author{Fred Diamond}
\email{fred.diamond@kcl.ac.uk}
\address{Department of Mathematics,
King's College London, WC2R 2LS, UK}
\author{Payman L Kassaei}
\email{payman.kassaei@kcl.ac.uk}
\address{Department of Mathematics,
King's College London, WC2R 2LS, UK}

\date{April 2025}

\begin{abstract}  Let $p$ be a prime, $F$ a totally real field in which $p$ is unramified, and $X/\Fpbar$ a Shimura variety associated to ${\rm Res}_{F/\QQ} {\rm GL}_2$ (or a PEL Hilbert modular variety).  A mod $p$ Hilbert modular form of weight $\kappa$ can be defined as a section of an automorphic line bundle $\calL_\kappa$ on $X$. We consider sections of $\calL_\kappa$ (forms) over a Goren-Oort stratum $X_T$ inside $X$, and define the cone of weights of $X_T$ to be the $\QQ^{\geq 0}$-cone generated by the weights of all nonzero  forms on $X_T$. We explicitly determine the cone of weights of all strata, showing in particular that they are not in general generated by the weights of the associated Hasse invariants. Using this, we define a notion of minimal cone for each stratum, and explicitly determine the minimal cones of all strata. When $X$ is a Shimura variety associated to ${\rm Res}_{F/\QQ} {\rm GL}_2$, we prove that for every nonzero eigenform $f$ for the prime-to-$p$ Hecke algebra on a stratum $X_T$, there is another eigenform with the same Hecke eigenvalues which has weight in the minimal cone of $X_T$.
\end{abstract}
\maketitle

\section{Introduction}
Let $F$ be a totally real field of degree $d>1$ over $\QQ$, $\calO_F$ its ring of integers, and $p$ a prime which is unramified in $F$. Recall that the weight of a mod $p$ (geometric) Hilbert modular form is a $d$-tuple of integers $\kappa=\sum_{\beta \in \BB} k_\beta e_\beta \in \ZZ^\BB$, where $\BB=\Hom(\calO_F,\Fpbar)$. Let $\sigma$ denote the Frobenius automorphism acting on $\BB$ by composition on the right. In \cite{DK1}, we introduced the notion of the minimal cone of weights for mod $p$ Hilbert modular forms:
\[
\calC^{\rm min}=\{\kappa=\sum_{\beta \in \BB} k_\beta e_\beta \in \QQ^\BB: pk_\beta \geq k_{\sigma^{-1}\beta}, \forall \beta \in \BB\},
\]
and proved that for every mod $p$ Hilbert modular eigenform $f$, there is an eigenform with the same $q$-expansion which has   weight in $\calC^{\rm min}$. In fact, we proved a stronger result:   every mod $p$ Hilbert modular form that is not divisible by any partial Hasse invariants has weight in  $\calC^{\rm min}$.  An immediate consequence of this result is that the cone of weights of mod-$p$ Hilbert modular forms, i.e., the $\QQ^{\geq 0}$-cone generated by the weights of nonzero Hilbert modular forms, is generated by the weights of the partial Hasse invariants, namely, it is the Hasse cone: 
\[
\calC^{\rm Hasse}=\{\kappa\in \QQ^\BB: \kappa= \sum_{\beta \in \BB} x_\beta h_\beta\ {\rm such\ that}\ x_\beta \in \QQ^{\geq0},  \forall \beta\in \BB\},
\]
where, $h_\beta =-e_\beta + p e_{\sigma^{-1}\beta}$ is the weight of the partial Hasse invariant ${\bf h}_\beta$. In \cite{DK2}, we extended these results to the general case where $p$ may be ramified in $F$. 

As part of a program to study the cone of weights of automorphic forms on Shimura varieties, Goldring and Koskivirta \cite{GK} also showed, using different methods,  that when $p$ is unramified in $F$, the cone of weights of Hilbert modular forms equals the Hasse cone. One question that arises in the work of Goldring and Koskivirta is to determine Shimura varieties for which the cone of weights is generated by the weights of the relevant Hasse invariants. While, as mentioned above, this is the case over a Hilbert modular variety, it fails to hold over some of its Goren-Oort strata. The central idea in this work is that the generators of the cone of weights should be sought among pullbacks of Hasse invariants of certain Shimura varieties to which the space of interest maps. We will make this precise. 

Let $X/\Fpbar$ denote a Hilbert modular variety of level prime to $p$, and for $\kappa=\sum_{\beta \in \BB} k_\beta e_\beta$, let $\omega^\kappa$ be the corresponding automorphic line bundle.  Recall that if $\pi:\calA \rightarrow X$ is the universal abelian scheme over $X$, then $\pi_\ast\Omega_{\calA/X}\cong \oplus_{\beta\in \BB}\omega_\beta$, where, $\omega_\beta$ is a line bundle, $\calO_F$ acts on it via  $\beta$, and $\omega^\kappa=\otimes_{\beta\in\BB}\!\ \omega_\beta^{k_\beta}$. Recall further the partial Hasse invariant ${\bf h}_\beta \in H^0(X,\omega^{h_\beta})$ defined in \cite{AG}. For any $T \subseteq \BB$, the Goren-Oort stratum $X_T$ is defined by the vanishing of partial Hasse invariants ${\bf h}_\beta$, for $\beta \in T$. By the cone of weights of $X_T$, denoted $\calC(X_T,\omega^\bullet)$, we mean the $\QQ^{\geq 0}$-cone generated by those $\kappa$ for which $H^0(X_T,\omega^\kappa)\neq 0$. The collection of the Hasse invariants of $X_T$ is given by  $\{ {\bf h}_\beta: \beta \not \in T; {\bf b}_\beta: \beta \in T\},$ where, for $\beta \in T$,  ${\bf b}_\beta$ is a nowhere vanishing section of weight $b_\beta=e_\beta+pe_{\sigma^{-1}\beta}$ on $X_T$, corresponding to the isomorphism $\omega_\beta \cong \omega^{-p}_{\sigma^{-1}\beta}$. In particular, we always have an inclusion of $\QQ^{\geq 0}$-cones
\begin{eqnarray}\label{eqn: inclusion of cones}
\langle h_\beta: \beta \not\in T; \pm b_\beta: \beta \in T\rangle \subseteq \calC(X_T,\omega^\bullet).
\end{eqnarray}
Goldring and Koskivirta \cite{GK} showed that the above inclusion is an equality for certain ``admissible" strata, a result that also follows from our work in \cite{DK1}, and asked the question if this is the case for all strata. In the first part of this paper, we show that the inclusion is proper for all ``non-admissible" strata, and determine the cone of weights for all strata. In fact, we show that  every generator of $\calC(X_T,\omega^\bullet)$ is obtained by pulling back a Hasse invariant over another stratum $X_{T'}$, via a map $X_T\rightarrow X_{T'}$. Let us make this more explicit. For simplicity of presentation, we will assume $p$ is inert in $F$, and consider only $T \subsetneq \BB$.  In \cite {TX}, Tian and Xiao prove that for such $T$, there is a $T \subseteq \tilde{T}$ of even cardinality, such that $X_T$ is isomorphic to a $(\PP^1)^{|\tilde{T}-T|}$-bundle over a quaternionic Shimura variety associated to the quaternion algebra $B_{\tilde{T}}$ that is ramified exactly at $\tilde{T}$, where $\tilde{T}$ is considered as a subset of $\Hom(F,\RR)$. In particular, this implies that if $T \subseteq T' \subseteq \BB$ are such that $\tilde{T}=\tilde{T}'$, then there is a map\footnote{ These maps could also be constructed directly using Dieudonn\'e module theory.} $\pi_{T,T'}:X_{T} \rightarrow X_{T'}$ which makes $X_{T}$ a  $(\PP^1)^{|T'-T|}$-bundle over $X_{T'}$. We prove that $\calC(X_T, \omega^\bullet)$ is generated by the weights of the pullbacks of the Hasse invariants of $X_{T'}$ under $\pi_{T,T'}$ as $T'$ varies subject to the above condition. More precisely, for the restricted choice of $T$ as above, we have
\[
\calC(X_T, \omega^\bullet)=\langle f_\beta(T): \beta \in \BB-T; \pm b_\beta: \beta \in T\rangle,
\]  
where $f_\beta(T)=-e_{\sigma^{n_\beta}\beta}-(-p)^{n_\beta}e_\beta$, where $n_\beta$ is the smallest $i>0$ such that $\sigma^i\beta \not\in \tilde{T}$, if $\tilde{T}\neq \BB$, and $n_\beta=d$ if $\tilde{T}=\BB$. When $\tilde{T}=T$, i.e., when $X_T$ is isomorphic to a quaternionic Shimura variety associated to $B_{\tilde{T}}$, this cone equals the cone of weights generated by the weights of the Hasse invariants of $X_T$ (as well as those of the quaternionic Shimura variety). This is the admissible case referred to above. In all other cases, the inclusion \ref{eqn: inclusion of cones} is proper.

In the second part of the article, we study the question of minimal weights of forms over strata. One motivation for this work has been to clarify what a minimal cone of weights should mean for more general Shimura varieties, and how such minimal cones can be calculated. In \cite{DK1}, the definition of the minimal cone was motivated by considerations arising from the relationship between Hilbert modular forms and Galois representations as in \cite{DS}. In this work, we show how the minimal cone of a stratum $X_T$ can be defined using a partial subset of the generators $\kappa$ of $\calC(X_T,\omega^\bullet)$, for which $ H^0(C,\omega^\kappa)$ is $1$-dimensional for any connected component $C$ of $X_T$.     Starting with this definition, we go on to explicitly calculate the minimal cones of all strata, and prove that  these  minimal cones play the same role vis-\`a-vis the minimal weights of Hecke eigenforms over the strata. To account for the (prime-to-$p$ Hecke action), in the final part of this article, we study the same questions over Shimura varieties associated to ${\rm Res}_{F/\QQ} {\rm GL}_2$ (as opposed to the related PEL Shimura varieties implicitly used in the discussion above). We will make this more precise below. 

Let $X'/\Fpbar$ denote the Shimura variety associated to ${\rm Res}_{F/\QQ} {\rm GL}_2$ of a level prime to $p$. Again, for any $T \subset \BB$, there is a stratum $X'_T$ given by the vanishing of the partial Hasse invariants indexed by $\beta \in T$.  In this setting, the weight of a form defined over any stratum is a tuple $(\lambda, \kappa)$, where $\kappa=\sum_\beta k_\beta e_\beta$, $\lambda=\sum_\beta \ell_\beta e'_\beta$ are elements of  $\ZZ^\BB$. Defining the cone of weights $\calC(X'_T,\delta^\bullet\omega^\bullet)$ to be the $\QQ^{\geq 0}$-cone generated by weights of nonzero forms on $X'_T$, we show that $$\calC(X'_T,\delta^\bullet\omega^\bullet)=\langle f_\beta: \beta \in \BB-T; \pm b_\beta: \beta \in T; \pm e'_\beta: \beta \in \BB\rangle$$
We further define and explicitly describe the minimal cone of weights for $X'_T$. Finally, we  prove that for any form ${\bf f}$ of weight $(\kappa,\lambda)$ on $X'_T$ which is an eigenform for the prime-to-$p$ Hecke algebra, there is a from ${\bf f}_0$ on $X'_T$ which has the same Hecke eigenvalues as ${\bf f}$, such that the weight of ${\bf f}_0$  lies in the minimal cone of $X'_T$.

While, in \cite{DK1,DK2}, we used the explicit description of the minimal cone of $X$ to identify the cone of weights on $X$, in this work, we first determine the cone of weights for all strata in $X$, and then use that to find an explicit description of the minimal cones. We propose this as a strategy for describing minimal cones over more general Shimura varieties, in combination with the methods developed by Goldring, Koskivirta and their collaborators.

As stated earlier, every stratum $X_T$ is Hecke-equivariantly isomorphic to a flag space over some quaternionic Shimura variety. We would like to mention that, while we don't pursue this in the article, the exact methods of this work can be used to determine cones of weights and minimal cones of flag spaces over quaternionic Shimura varieties more generally, (and, in particular, those of the full flag spaces). For example, given $T \subset \BB$, the cone of weights of the full flag space over a Shimura variety associated to $B_{\tilde{T}}$  is generated by all $f_\beta(T)$ for $\beta \in \BB$, and the explicit description of the minimal cone is similar to those obtained in this paper.

Finally, we note that Gabriel Micolet \cite{M} has recently generalized the main result of \cite{TX} to the case where $p$ may be ramified in $F$, using an approach as in \cite{DKS}.  Using the constructions of \cite{M}, we expect the results of this paper can be generalized to the case where $p$ is ramified in $F$.

 \subsection{Acknowledgements}  We are grateful to Wushi Goldring, Jean-Stefan Koskivirta, and Deding Yang for several helpful conversations around the themes of this paper. This research also benefited from discussions sparked by a workshop on Serre weight conjectures held at King's College London in March 2024, sponsored by the Heilbronn Institute for Mathematical Research.

 \section{Hilbert modular varieties}\label{section: HMV}

 Let $F$ be a totally real field of degree $d>1$, $\calO_F$ its ring of integers, and $\gerd_F$ its different. Let $p$ be a prime number which is unramified in $F$. For a  prime ideal $\gerp$  of $\calO_F$ dividing $p$, let $\FF_\gerp = \calO_F/\gerp$, and denote by  $\FF$ a finite field containing an isomorphic copy of each $\FF_\gerp$. Let $\QQ_\FF$ denote the fraction field of $W(\FF)$, and fix embeddings $\iota_\FF: \mathbb{Q}_\FF \hookrightarrow \QQ_p^{\rm ur}
 \subset \Qpbar$, as well as embeddings $\iota_p:\overline{\QQ} \hookrightarrow \overline{\QQ}_p $ and
 $\iota_\infty: \overline{\QQ} \hookrightarrow \CC$. Let $\AA^\infty_F$ denote the finite adeles over $F$, and 
$\AA_F^{\infty, p}$ the finite prime-to-$p$ adeles of $F$.

Let $\BB={\rm Emb}(F,\mathbb{Q}_\FF)$.  We write $\BB=\textstyle\coprod_{\mathfrak{p}|p}
\mathbb{B}_\mathfrak{p}$, where 
$
\mathbb{B}_\mathfrak{p}= \{\beta\in\mathbb{B}\colon
\gerp \subseteq \beta^{-1}(pW(\FF))\}.
$
 Using $\iota_p$, $\iota_\infty$, we identify  $\BB$ with the set of infinite places of $F$, and think of $\BB_\gerp$ as those inducing the prime ideal $\gerp$. Let $\sigma$ denote the Frobenius automorphism of $\mathbb{Q}_\FF$, lifting $x \mapsto
x^p$ modulo $p$. It acts on $\BB$ by $\beta \mapsto \sigma
\circ \beta$. It acts transitively on each $\BB_\mathfrak{p}$.

Let $K$ be a small enough (see \cite[Lemma~2.4.1]{DS} for example) open compact subgroup of ${\rm GL}_2(\AA_F^{\infty})$ containing ${\rm GL}_2(\calO_{F,p})$. Write $K=K^pK_p$, where $K^p \subset {\rm GL}_2(\AA_F^{\infty, p})$, and $K_p={\rm GL}_2(\calO_{F,p})$. Fix an element $\epsilon \in (\AA_F^{\infty,p})^\times/\det(K^p)(\widehat{\ZZ}^{(p)})^\times$, and let $I$ denote the (prime-to-$p$) fractional ideal of $F$ generated by $\epsilon$.

We denote by $X_K^\epsilon$ the scheme of relative dimension $d$ over $\Spec(\ZZ_{(p)})$ which represents the functor:
\[
 \mathcal{M}: \langle{\rm Schemes}/\ZZ_{(p)} \rangle \ra \langle{\rm Sets}\rangle,
 \] where $\mathcal{M}(S)$ is the set of all isomorphism classes of  $\underline{A}/S=(A/S,\iota,\lambda,\alpha)$ such that:  
 \begin{itemize}
 \item $A$ is an abelian scheme of relative dimension $d$ over  $S$;
\item $\iota\colon\mathcal{O}_F \hookrightarrow {\rm End}_S(A)$ is a ring
homomorphism such that ${\rm Lie}(A/S)$ is locally free of rank one over $\calO_F \otimes \calO_S$;
 \item  $\lambda$ is a quasi-polarization inducing an $\CO_F$-linear isomorphism $A\otimes_{\CO_F} \gerd_F^{-1} \stackrel{\sim}{\longrightarrow} A^{\vee} \otimes_{\CO_F} I$ (in particular, $\iota$ is invariant under the associated Rosati involution);
 \item  $\alpha$ is a level $K^p$-structure compatible with $\epsilon$ (more precisely, in the sense of \S2.1.2 of \cite{DKS}, with compatibility meaning that the diagram on p.12 of {\em loc.~cit.}~commutes for some $\zeta$).
\end{itemize}

\begin{remark} There are two standard notions of Hilbert modular varieties: Shimura varieties associated to the group $\Res_{F/\QQ}\GL_2$, or to the group $G^*: = \Res_{F/\QQ}\GL_2 \times_{T_F} \GG_m$, where $T_F = \Res_{F/\QQ}\GG_m$ (the morphisms to it being induced by the determinant and base-change).  The advantage of the latter setting is its representability by PEL moduli problems, whereas the former is more readily applicable in the context of Langlands reciprocity. Our scheme $X_K^{\epsilon}$ is a model for the Shimura variety of level $K^* = g^{-1}Kg \cap G^*(\AA_F^\infty)$ for the group $G^*$, where $g \in \GL_2(\AA_F^{\infty,p})$ is such that $\det(g) = \epsilon$, with conventions chosen as in \cite{DKS} to facilitate the transfer of results (in \S\ref{section: GL2} below) to the setting of $\Res_{F/\QQ}\GL_2$.  However, we differ slightly from the set-up in \cite{DKS} by imposing the compatibility condition with a fixed $\epsilon$, as it will be more convenient here to work with (not just locally) Noetherian schemes.
\end{remark}

We let $X=X_K^\epsilon \otimes_{\ZZ_{(p)}} \FF$. Let $s:\calA \rightarrow X$ denote the universal abelian scheme. Let $\omega= s_*\Omega^1_{\calA/X}$ and $\calH=s_*H^1_{\rm dR}(\calA/X)$.
By assumption,   $\omega$ is locally free of rank one over  $\calO_{X} \otimes_\ZZ \calO_F \cong \oplus_{_{\beta\in \BB}}\  \CO_{X}$.
This induces a decomposition
\[
\omega=\oplus_{_{\beta\in \BB}}\  \omega_{\beta},
\]
where, each $\omega_\beta$ is locally free of rank one over $\calO_{X}$.
Similarly, we have a decomposition
\[
\calH =\oplus_{_{\beta\in \BB}}\  \calH_{\beta},
\]
where, each  $\calH_\beta$ is locally free of rank two over $\calO_{X}$.  Furthermore, using the quasi-polarization $\lambda$ and the canonical isomorphism $R^1\epsilon_*\CO_{\calA^\vee} \cong{\rm Lie}(\calA/X)$, we obtain isomorphisms $\epsilon_\beta: \wedge^2 \calH_\beta \cong \calO_{X}$.

 For $\beta \in \BB$, we let $e_\beta \in \ZZ^\BB$ correspond to $\beta$. For any $\kappa = \sum k_\beta e_\beta \in \ZZ^\BB$, we define
\[
\omega^\kappa=\otimes_{_{\beta \in \BB}}\  \omega_\beta^{k_\beta}.
\]
For each $\beta \in \BB$, there is a partial Hasse invariant ${\bf h}_\beta \in H^0(X,\omega_{\sigma^{-1}\beta}^{\otimes p}\otimes \omega_\beta^{-1})$ defined in \cite{AG}, as the $\beta$-component of the Verschiebung map $V:\omega \rightarrow \omega^{(p)}$.

The Goren-Oort stratification on $X$ is a collection $\{X_T\}_{T \subseteq \BB}$ of closed subschemes of $X$, where $X_T$, given by the vanishing of $\{{\bf h}_\beta: \beta \in T\}$, is a smooth and equi-dimensional closed subscheme of codimension $|T|$. If $\beta \not\in T$, we denote the restriction of ${\bf h}_\beta$ to $X_T$ by ${\bf h}_\beta(T)$. 

\begin{dfn}\label{dfn: b} Let $\gerp|p$, and $\beta\in \BB_\gerp$. Assume $\beta \in T \subset \BB$. Consider the Verschiebung morphism   $V_{\beta}: \calH_{\beta} \rightarrow \calH^{(p)}_{\sigma^{-1}\beta}$ restricted to $X_T$. It is surjective onto $\omega_{\sigma^{-1}\beta}^{(p)}$, and  since $\beta \in T$, it vanishes on $\omega_{\beta}$,  inducing an isomorphism of line bundles on $X_T$:
$$ V_{\beta}: \omega_{\beta}^{-1}  \overset{\epsilon_\beta}{\cong} \omega_{\beta}^{-1} \otimes  \wedge^2 \calH_\beta  \cong \calH_{\beta}/\omega_{\beta} \xrightarrow{\sim}  \omega_{\sigma^{-1}\beta}^p.
$$
We let ${\bf b}_\beta(T)$ denote  the corresponding nowhere-vanishing section in $H^0(X_T, \omega_{\sigma^{-1}\beta}^{\otimes p}\otimes \omega_\beta)$.  \end{dfn}
 
 \begin{cor}\label{cor: torsion} Let $\BB_\gerp \subset T \subset \BB$. Then, for each $\beta \in \BB_\gerp$, we have $\omega_\beta^{(-p)^{|\BB_\gerp|}-1} \cong \calO_{X_T}$.
 \end{cor}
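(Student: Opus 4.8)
The plan is to iterate the isomorphism of line bundles produced by Definition~\ref{dfn: b}. Fix $\gerp \mid p$ and write $f = |\BB_\gerp|$; recall that, $p$ being unramified, $\sigma$ acts on $\BB_\gerp$ as a single cycle of length $f$. For every $\beta' \in \BB_\gerp$ the hypothesis $\BB_\gerp \subseteq T$ gives $\beta' \in T$, so the nowhere-vanishing section ${\bf b}_{\beta'}(T) \in H^0(X_T, \omega_{\sigma^{-1}\beta'}^{\otimes p}\otimes\omega_{\beta'})$ exists and provides an isomorphism $\omega_{\beta'} \cong \omega_{\sigma^{-1}\beta'}^{-p}$ of line bundles on $X_T$.

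First I would apply this with $\beta' = \beta$, then with $\beta' = \sigma^{-1}\beta$, then $\sigma^{-2}\beta$, and so on, raising to the appropriate power at each stage, so that after $i$ steps one has $\omega_\beta \cong \omega_{\sigma^{-i}\beta}^{(-p)^i}$ on $X_T$; the exponents multiply because replacing $\omega_{\sigma^{-(i-1)}\beta}$ by $\omega_{\sigma^{-i}\beta}^{-p}$ inside an $i{-}1$-st tensor power turns $(-p)^{i-1}$ into $(-p)^i$. Every intermediate embedding $\sigma^{-i}\beta$ again lies in $\BB_\gerp \subseteq T$, so each step is legitimate, and this is precisely where the hypothesis $\BB_\gerp \subseteq T$ (rather than just $\beta \in T$) is used.

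Taking $i = f$ and using $\sigma^{-f}\beta = \beta$ yields $\omega_\beta \cong \omega_\beta^{(-p)^f}$ on $X_T$, hence $\omega_\beta^{(-p)^{f}-1} \cong \calO_{X_T}$, which is the assertion since $f = |\BB_\gerp|$. I do not anticipate any serious obstacle here: the corollary is essentially a formal consequence of Definition~\ref{dfn: b} together with the transitivity of $\sigma$ on $\BB_\gerp$, and the only point requiring care is the bookkeeping of the sign and the exponent $(-p)^i$ around the $\sigma$-orbit.
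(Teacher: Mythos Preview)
Your proposal is correct and is exactly the argument the paper has in mind: the corollary is stated without proof immediately after Definition~\ref{dfn: b}, and your iteration of the isomorphism $\omega_{\beta'} \cong \omega_{\sigma^{-1}\beta'}^{-p}$ around the $\sigma$-orbit $\BB_\gerp$ is the intended justification.
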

   
\section{Morphisms between strata}\label{section: morphisms}

We start by reviewing some results from \cite{TX}. Let $T=\sqcup_{\gerp|p} T_\gerp$, where $T_\gerp \subseteq B_\gerp$.     A chain in $T_\gerp$ is a subset of the form $C=\{\sigma^{-i}\beta: 0 \leq i \leq m\}$ where $m\geq 0$, and so that $\sigma^{-m-1} \beta \not \in T_\gerp$ and $\sigma\beta \not\in T_\gerp$. For such a chain, we  define $\tilde{C}=C$ if $m$ is odd, and $\tilde{C}=C\cup\{\sigma^{-m-1} \beta\}$ if $m$ is even. Any $T_\gerp\subsetneq \BB_\gerp$ can be written as a disjoint union of chains, $T_\gerp=\sqcup_j C_j$, and we define $\tilde{T}_\gerp=\sqcup_j \tilde{C_j}$. If $T_\gerp=\BB_\gerp$, we define $\tilde{T}_\gerp=\BB_\gerp$. Set $\tilde{T}=\sqcup_{\gerp|p} \tilde{T}_\gerp$, and $$S(T)=\tilde{T}\cup \{\gerp: T_\gerp=\BB_\gerp, 2\!\not |\ |\BB_\gerp|\}.$$ Then $S(T)$ is a set of places of $F$ of even cardinality, and we denote by $B_{S(T)}$ the quaternion algebra over $F$ ramified exactly at $S$.  We define 
$${\rm Iw}(T)=\{\gerp|p: T_\gerp=\BB_\gerp,\ {\rm and}\ |\BB_\gerp|\ {\rm is\ even}\}.$$

Let $G$ denote the algebraic group over $\QQ$ defined by 
$$G(R) = \{\,g\in (B_{S(T)}\otimes R)^\times\,|\,\nu(g) \in R^\times\,\},$$ where $\nu$ is the reduced norm, and
consider the system of quaternionic Shimura varieties $\{X_{G,K_T}\}_{K_T}$ for $G$, where $K_T\subset G(\AA^{\infty})$ is a compact open subgroup.   Fix $T \subseteq \BB$, and consider all $T'$ satisfying $T \subseteq T' \subseteq \tilde{T}$, such that ${\rm Iw}(T')={\rm Iw}(T)$. Note that $S(T)=S(T')$. It follows from Theorem~5.2 of \cite{TX} that there is a choice of $K_T$ (see \S\ref{section: GL2} below), a collection of rank $2$ vector bundles $\{\calV_\beta: \beta \in \tilde{T}-T\}$ on $X_{G,K_T}$, and isomorphisms
 $$\xymatrix{X_{T'}  \ar[r]^{\iota_{T'}\ \ \ \ \ \ \ \  }_{\cong\ \ \ \ \ \ \ \ } & \Pi_{_{\beta \in \tilde{T}-T'}} \PP(\calV_{\beta}) \ar[r]^{\ \ \pi_{T'}} &X_{G,K_T},}  $$
where $\PP(\calH_{\beta}) \rightarrow X_{G,K_T}$ is the projectivization of $\calV_\beta$ on $X_{G,K_T}$. 
Furthermore it follows from the proof of \cite[Thm.~5.2]{TX} that there is a collection of line bundles $\{\eta_\beta: \beta \in \BB-\tilde{T}\}$ on $X_{G,K_T}$ such that $\iota_{T'}^*\pi_{T'}^*{\eta_\beta} \cong\omega_\beta$ if $\beta \in \BB-\tilde{T}$, and $ \iota_{T'}^*\calO_{\beta}(-1)\cong\omega_\beta$ for any $\beta \in \tilde{T}-T'$.

 We now define certain morphisms between the strata in $X$. Let $T, T' \subset \BB$ be as above. Then, we have isomorphisms $$\iota_T: X_T \simeq \Pi_{_{\beta \in \tilde{T}-T}} \PP(\calV_{\beta}),$$ 
 $$\iota_{T'}:X_{T'} \simeq \Pi_{_{\beta \in \tilde{T}-T'}} \PP(\calV_{\beta}).$$
Let $\calH^{T'}_\beta=\iota^*_{T'}\pi^*_{T'}(\calV_{\beta})$, a rank $2$ vector bundle on $X_{T'}$. We obtain an isomorphism
\[
\iota_{T,T'}:  X_T \simeq \Pi_{_{\beta \in T'-T}} \PP(\calH^{T'}_\beta).
\]
Let $s_{T'}$ denote the structural morphism $\PP(\calH^{T'}_\beta) \rightarrow X_{T'}$. It follows that 
$\iota^*_{T,T'}s^*_{T'}\omega_\beta\cong \omega_\beta$ for all $\beta \in \BB-T'$, and $\iota^*_{T,T'}\calO_\beta(-1)\cong \omega_\beta$ for $\beta \in T'-T$. We define
$$\pi_{T,T'}:=s_{T'}\circ\iota_{T,T'}: X_T \rightarrow X_{T'}.$$
Then, $\pi^*_{T,T'}\omega_\beta\cong\omega_\beta$ for $\beta \in \BB-T'$, and $\omega_\beta \cong \calO_\beta(-1)$ over $X_T$, for $\beta \in T'-T$.  If $T \subset T'' \subset T'$, then  $\pi_{T,T"}$ and $\pi_{T',T''}$ are defined, and \begin{equation}\label{eqn: cycle}
\pi_{T'',T'} \pi_{T,T''}=\pi_{T,T'}.
\end{equation}

In the following definition, given $T$, we attach various  integers to $\beta \in \BB$ that we will use throughout the article.
\begin{dfn}\label{definition: mu}
Fix $T \subseteq \BB$, and let $\beta \in \BB_\gerp \subseteq \BB$. If $T_\gerp\neq\BB_\gerp$, we define $\mu_\beta$ to be  the smallest integer $i> 0$ such that $\sigma^i\beta \not \in T$. If $T_\gerp=\BB_\gerp$, we define $\mu_\beta=0$.  If $\beta \in \BB_\gerp$, and $\BB_\gerp-\tilde{T}_\gerp \neq \emptyset$, we define $n_\beta$ to be the smallest integer $i>0$ such that $\sigma^i\beta \in \BB_\gerp-\tilde{T}_\gerp$. Finally, if $\beta \in \BB_\gerp \neq T_\gerp$,  we define $\nu_\beta$ to be the smallest integer $i\geq0$ such that $\sigma^{-i}\beta \not \in T_\gerp$. Note that all these notions depend on $T$.\end{dfn}

 \begin{lemma}\label{lemma: pullback} Let $T \subseteq T' \subseteq \BB$ be as above. Assume $\kappa=\sum_{\beta \in \BB}k_\beta e_\beta \in \ZZ^\BB$ be such that $k_{\sigma^i\beta}=0$ for all $\beta \in T'-T, 0\leq i \leq \mu_\beta-1$. Then,  $\pi_{T,T'}^*\omega^\kappa \cong \omega^\kappa$, and the pullback morphism
 $$\pi^*_{T,T'}: H^0(X_{T'}, \omega^\kappa) \rightarrow H^0(X_T, \omega^\kappa)$$
 is an isomorphism.  Furthermore, these isomorphisms are compatible in the sense that if $T''$ is such that $T \subseteq T'' \subseteq T' \subseteq \BB$, then $\pi_{T,T''}^*\circ \pi_{T'',T'}^* = \pi_{T,T'}^*$.
 \end{lemma}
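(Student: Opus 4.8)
\noindent\textit{Proof plan.} The plan is to prove the line-bundle isomorphism $\pi_{T,T'}^*\omega^\kappa\cong\omega^\kappa$ on $X_T$ first, and then to deduce the statement on global sections formally from the fact that $\pi_{T,T'}$ is an iterated $\PP^1$-bundle. First I would unwind the hypothesis. Since $T_\gerp=\BB_\gerp$ would force $T'_\gerp=T_\gerp$, every $\beta\in T'-T$ lies in some $\BB_\gerp$ with $T_\gerp\neq\BB_\gerp$ and is the cap $\sigma^{-m-1}\gamma_\beta$ of a unique even chain $C_\beta=\{\sigma^{-i}\gamma_\beta:0\le i\le m\}$ of $T$; comparing with Definition~\ref{definition: mu} one finds $\mu_\beta=m+2$ and $\{\sigma^i\beta:0\le i\le\mu_\beta-1\}=C_\beta\cup\{\beta\}=\tilde C_\beta$. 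Hence the hypothesis says precisely that $k_\gamma=0$ for all $\gamma$ in $Z:=\bigcup_{\beta\in T'-T}\tilde C_\beta$, and $Z\subseteq T'$. The heart of the argument is the claim that $\pi_{T,T'}^*(\omega_\gamma|_{X_{T'}})\cong\omega_\gamma|_{X_T}$ for every $\gamma\in\BB\setminus Z$; granting this and using $k_\gamma=0$ on $Z$, one gets $\pi_{T,T'}^*\omega^\kappa\cong\bigotimes_{\gamma\notin Z}\pi_{T,T'}^*(\omega_\gamma|_{X_{T'}})^{k_\gamma}\cong\bigotimes_{\gamma\notin Z}\omega_\gamma^{k_\gamma}\cong\omega^\kappa$ on $X_T$.

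To prove the claim I would split $\BB\setminus Z=(\BB\setminus T')\sqcup(T\setminus Z)$. For $\gamma\in\BB\setminus T'$ the isomorphism is the one recalled just before the lemma. For $\gamma\in T\setminus Z$ lying in some $\BB_\gerp$ with $T_\gerp\neq\BB_\gerp$, I would iterate the isomorphisms $\omega_{\gamma'}\cong\omega_{\sigma^{-1}\gamma'}^{-p}$ coming from the nowhere-vanishing sections ${\bf b}_{\gamma'}$ of Definition~\ref{dfn: b} — valid simultaneously on $X_T$ and on $X_{T'}$ as long as $\gamma',\sigma^{-1}\gamma',\dots\in T\subseteq T'$ — for $\nu_\gamma$ steps; since $\gamma,\dots,\sigma^{-(\nu_\gamma-1)}\gamma\in T$ while $\sigma^{-\nu_\gamma}\gamma\notin T$, the element $\sigma^{-\nu_\gamma}\gamma$ is the cap of the chain of $T$ through $\gamma$, and because $\gamma\notin Z$ this cap cannot have been added to $T'$, i.e.\ $\sigma^{-\nu_\gamma}\gamma\in\BB\setminus T'$, so the first case applies after transporting the relations along $\pi_{T,T'}$; this gives $\omega_\gamma|_{X_T}\cong\pi_{T,T'}^*(\omega_\gamma|_{X_{T'}})$. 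The only remaining indices are those in some $\BB_\gerp$ with $T_\gerp=\BB_\gerp$; for such $\gerp$ there is no $\PP^1$-factor and $\omega_\gamma$ is pulled back from the quaternionic Shimura variety $X_{G,K_T}$ along the structural maps of \cite{TX}, with respect to which $\pi_{T,T'}$ is a morphism, so the isomorphism holds there too.

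Granting $\pi_{T,T'}^*\omega^\kappa\cong\omega^\kappa$, the rest is formal: through $\iota_{T,T'}$ the morphism $\pi_{T,T'}$ is the tower $\Pi_{\beta\in T'-T}\PP(\calH^{T'}_\beta)$ of projective bundles of rank-two vector bundles, so $\pi_{T,T'*}\calO_{X_T}\cong\calO_{X_{T'}}$; by the projection formula the adjunction unit $\omega^\kappa|_{X_{T'}}\to\pi_{T,T'*}\pi_{T,T'}^*(\omega^\kappa|_{X_{T'}})\cong\pi_{T,T'*}(\omega^\kappa|_{X_T})$ is an isomorphism, and on global sections this identifies $\pi_{T,T'}^*$ with an isomorphism $H^0(X_{T'},\omega^\kappa)\xrightarrow{\sim}H^0(X_T,\omega^\kappa)$. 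For the compatibility, if $T\subseteq T''\subseteq T'$ then the $(T,T')$-hypothesis implies the $(T,T'')$-hypothesis (since $T''-T\subseteq T'-T$) and the $(T'',T')$-hypothesis (a short check, using that a chain of $T''$ is a union of chains of $T$ joined by the caps added in passing from $T$ to $T''$, so that for $\beta\in T'-T''$ its $T''$-chain together with $\beta$ is contained in $Z$); hence all three pullback maps are defined by the first part, and $\pi_{T,T''}^*\circ\pi_{T'',T'}^*=\pi_{T,T'}^*$ follows at once from \eqref{eqn: cycle}. The step I expect to be the main obstacle is the claim in the second paragraph — isolating exactly which $\omega_\gamma$ are unchanged under $\pi_{T,T'}^*$, which is where the geometry of the Goren--Oort strata from \cite{TX} is needed, together with the verification that the combinatorial hypothesis on $\kappa$ is precisely what confines its support to the complement of $Z$.
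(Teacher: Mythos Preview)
Your proposal is correct and follows essentially the same approach as the paper: show $\pi_{T,T'}^*\omega_\gamma\cong\omega_\gamma$ for $\gamma$ outside the set $Z=\{\sigma^i\tau:\tau\in T'-T,\,0\le i\le\mu_\tau-1\}$ by reducing via the ${\bf b}_\bullet$-isomorphisms to the known case $\gamma\in\BB\setminus T'$, and then invoke the iterated $\PP^1$-bundle structure for the $H^0$-isomorphism and \eqref{eqn: cycle} for compatibility. The only cosmetic differences are that the paper parametrizes the reduction ``forward'' from some $\beta_0\in\BB\setminus T'$ via $\mu_{\beta_0}$ whereas you go ``backward'' via $\nu_\gamma$, and you are more explicit about the case $T_\gerp=\BB_\gerp$ and the projection-formula argument.
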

 
\begin{proof} We claim that $\pi^*_{T,T'}\omega_\beta \cong \omega_\beta$, for all $\beta \in J=\BB-\{\sigma^i\tau: \tau \in T'-T, 0\leq i \leq \mu_\tau-1\}$. From the above discussion, we know this holds for all $\tau \in \BB-T'$. Any  $\beta \in J$ is of the form $\sigma^i\beta_0$, where $\beta_0 \in \BB-T'$, and $0 \leq i \leq \mu_{\beta_0}-1$. The claim now follows since for such $\beta_0,i,$ we have an isomorphism $\omega_{\sigma^i\beta_0} \cong \omega_{\beta_0}^{(-p)^i},$ over both $X_T$ and $X_{T'}$. The main statement now  follows since $\pi_{T,T'}: X_T \rightarrow X_{T'}$ is a product of $\PP^1$-bundles.  The compatibility assertion follows from Equation (\ref{eqn: cycle}) and the definition of the  isomorphism $\pi_{T,T'}^*\omega^\kappa \cong \omega^\kappa$.
\end{proof}

\section{Cones of weight for strata}

In the following we will prove the existence of a collection of sections of $\omega^\kappa$ on a stratum $X_T$ (for varying $\kappa$) compatibly with respect to the morphisms described above.

 Let $\gerp|p$, and $\beta,\beta' \in \BB_\gerp$. Choose $0<n\leq |\BB_\gerp|$ such that $\sigma^n\beta'=\beta$. We define $h_\beta^{\beta'}:=-e_{\beta}+ p^n e_{\beta'}$, and $b_\beta^{\beta'}:=e_{\beta}+p^n e_{\beta'}$. When $n=1$, we simply write $h_\beta$, and $b_\beta$, so that we have ${\bf h}_\beta(T) \in H^0(X_T,\omega^{h_\beta})$, for $\beta \not\in T$, and ${\bf b}_\beta(T) \in H^0(X_T,\omega^{b_\beta})$, for $\beta \in T$. We have an identity:
\begin{eqnarray}\label{equation: Hasse} h_{\sigma^{n+m}{\beta}}^{\sigma^n\beta} + p^m h_{\sigma^n\beta}^\beta =h_{\sigma^{n+m}\beta}^{\beta},\end{eqnarray} if $n,m>0$, and $n+m \leq |\BB_\gerp|$.  

For each $T \subseteq \BB$, and  $\gerp|p$, let $\calH(T_\gerp)=\BB_\gerp-T_\gerp$, and $\calT(T_\gerp)=(\BB_\gerp-\tilde{T}_\gerp) \cup \sigma(\tilde{T}_\gerp-T_\gerp)$.

\begin{prop} \label{prop: sections} Let $\beta \in \calH(T_\gerp)$, and $\beta' \in \calT(T_\gerp)$. Write $\beta'=\sigma^{-m}\beta$, where $0<m\leq |\BB_\gerp|$. Then, there exists a nonzero section $${\bf h}_\beta^{\beta'}(T) \in H^0(X_T,\omega^{h_\beta^{\beta'}}),$$ such that 
if $T \subseteq T' \subseteq  T^0:=T \cup \big( \{\sigma^{-j}\beta: 0<j<m\} \cap \tilde{T}\big)$ (whence, $\beta \in \calH(T'_\gerp), \beta'\in \calT(T'_\gerp)$, and $\pi_{T,T'}$ is defined),  then $${\bf h}_{\beta}^{\beta'}(T)=\pi_{T,T'}^*{\bf h}_{\beta}^{\beta'}(T').$$
\end{prop}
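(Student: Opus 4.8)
The plan is to construct the sections ${\bf h}_\beta^{\beta'}(T)$ directly, via a telescoping product of known partial Hasse invariants and the nowhere-vanishing sections ${\bf b}_\gamma$, and then to verify compatibility under the $\pi_{T,T'}$ pullbacks. First I would write $\beta' = \sigma^{-m}\beta$ and decompose the ``path'' from $\beta'$ to $\beta$ into the consecutive elements $\gamma_j := \sigma^{-j}\beta$ for $0 \le j \le m$, so $\gamma_0 = \beta \in \calH(T_\gerp)$ and $\gamma_m = \beta' \in \calT(T_\gerp)$. The key point is that, by the definition of $\calT(T_\gerp) = (\BB_\gerp - \tilde T_\gerp) \cup \sigma(\tilde T_\gerp - T_\gerp)$, for each intermediate index $\gamma_j$ with $0 < j < m$, either $\gamma_j \notin T_\gerp$ (so we have a genuine partial Hasse invariant ${\bf h}_{\gamma_{j-1}}(T)$ relating $\omega_{\gamma_j}$-powers to $\omega_{\gamma_{j-1}}$-powers, of weight $h_{\gamma_{j-1}} = -e_{\gamma_{j-1}} + p\, e_{\gamma_j}$), or $\gamma_j \in T_\gerp \cap \tilde T$, in which case ${\bf b}_{\gamma_{j-1}}(T)$ (from Definition~\ref{dfn: b}, wait—let me be careful: ${\bf b}_\gamma(T)$ is defined for $\gamma \in T$ and has weight $b_\gamma = e_\gamma + p\,e_{\sigma^{-1}\gamma}$) provides a \emph{nowhere-vanishing} section. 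Concretely I would set
\[
{\bf h}_\beta^{\beta'}(T) := \prod_{j=1}^{m} {\bf s}_{\gamma_{j-1}}^{\,p^{\,?}}(T),
\]
where ${\bf s}_\gamma = {\bf h}_\gamma$ if $\sigma^{-1}\gamma \notin T_\gerp$ and ${\bf s}_\gamma = {\bf b}_\gamma$ otherwise, with the exponents chosen by the telescoping identity~(\ref{equation: Hasse}) so that the total weight collapses to $h_\beta^{\beta'} = -e_\beta + p^m e_{\beta'}$; one checks the weight bookkeeping is exactly the cocycle identity in (\ref{equation: Hasse}) (with the sign changes on ${\bf b}$-factors absorbed since $b_\gamma$ and $h_\gamma$ differ only in the sign of the $e_\gamma$-term, and a nowhere-vanishing section can be inverted). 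Nonvanishing of the product follows because each ${\bf h}_{\gamma_{j-1}}(T)$ for $\gamma_j \notin T_\gerp$ is a nonzero section on the (integral/reduced, equidimensional) stratum $X_T$—here one uses that $X_{T \cup \{\gamma_{j-1}\}}$ is a proper closed subscheme, so ${\bf h}_{\gamma_{j-1}}$ does not vanish identically on $X_T$—and the ${\bf b}$-factors are nowhere vanishing.

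Next I would verify the compatibility with $\pi_{T,T'}^*$ for $T \subseteq T' \subseteq T^0 = T \cup (\{\sigma^{-j}\beta : 0 < j < m\} \cap \tilde T)$. The weight $h_\beta^{\beta'}$ satisfies the hypothesis of Lemma~\ref{lemma: pullback}: its support is $\{\beta, \beta'\} \subseteq \BB - T^0$ provided one checks neither $\beta$ nor $\beta'$ lies in the ``blocked'' set $\{\sigma^i\tau : \tau \in T'-T,\ 0 \le i \le \mu_\tau - 1\}$, which holds because $\beta \in \calH(T'_\gerp)$, $\beta' \in \calT(T'_\gerp)$, and elements added in passing from $T$ to $T'$ are of the form $\sigma^{-j}\beta$ with $0<j<m$—so by $\sigma$-periodicity and the definition of $\mu$ they can only reach the strictly intermediate $\gamma_j$'s, not the endpoints. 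Thus $\pi_{T,T'}^*: H^0(X_{T'}, \omega^{h_\beta^{\beta'}}) \to H^0(X_T, \omega^{h_\beta^{\beta'}})$ is an isomorphism by Lemma~\ref{lemma: pullback}. It then suffices to show $\pi_{T,T'}^* {\bf h}_\beta^{\beta'}(T') = {\bf h}_\beta^{\beta'}(T)$; by the multiplicativity of the construction and the fact that $\pi_{T,T'}^*$ is a ring homomorphism on sections, this reduces to checking it factor-by-factor, i.e. that $\pi_{T,T'}^* {\bf h}_{\gamma_{j-1}}(T') = {\bf h}_{\gamma_{j-1}}(T)$ and $\pi_{T,T'}^* {\bf b}_{\gamma_{j-1}}(T') = {\bf b}_{\gamma_{j-1}}(T)$. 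These follow because both partial Hasse invariants and the ${\bf b}$-sections are defined intrinsically from the Verschiebung morphism on the universal abelian scheme, which is compatible with pullback along $\pi_{T,T'}$ (it is the restriction of the same global morphism on $X$); one should also use compatibility of the isomorphisms $\pi_{T,T'}^* \omega_\gamma \cong \omega_\gamma$ for $\gamma \in \BB - T'$ fixed in the construction of $\pi_{T,T'}$. The final compatibility assertion ($\pi_{T,T''}^* \circ \pi_{T'',T'}^* = \pi_{T,T'}^*$ on these sections) is then immediate from~(\ref{eqn: cycle}) and the last clause of Lemma~\ref{lemma: pullback}.

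The main obstacle I anticipate is the \emph{bookkeeping of which intermediate indices lie in $T$ versus not}, and correspondingly that the recipe ${\bf s}_\gamma \in \{{\bf h}_\gamma, {\bf b}_\gamma\}$ genuinely covers every case that can arise for $0<j<m$ when $\beta' \in \calT(T_\gerp)$—in particular, one must rule out the possibility that some $\gamma_j$ with $0<j<m$ lies in $T_\gerp \setminus \tilde T$, since there is no ${\bf b}$-section available there; this is precisely why the hypothesis $\beta' \in \calT(T_\gerp)$ (rather than merely $\beta' \in \sigma(\text{something})$) is needed, as $\tilde T_\gerp$ is ``closed under the chain structure'' and hence a chain of $T_\gerp$-elements emanating backward from $\gamma_1$ and ending just before a non-$T$ element must stay inside $\tilde T_\gerp$. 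Establishing this combinatorial lemma about chains cleanly, and threading it through both the nonvanishing argument and the Lemma~\ref{lemma: pullback} hypothesis check, is where the real care is required; the algebro-geometric inputs (functoriality of Verschiebung, $\PP^1$-bundle pullback on sections) are standard.
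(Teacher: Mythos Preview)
Your construction has the right flavor (a telescoping product of ${\bf h}$'s and ${\bf b}$'s), but the compatibility argument contains a genuine gap.  You propose to verify $\pi_{T,T'}^*{\bf h}_\beta^{\beta'}(T') = {\bf h}_\beta^{\beta'}(T)$ \emph{factor by factor}, claiming that $\pi_{T,T'}^*{\bf h}_\gamma(T') = {\bf h}_\gamma(T)$ because ``both are defined intrinsically from Verschiebung, which is compatible with pullback along $\pi_{T,T'}$ (it is the restriction of the same global morphism on $X$).''  This is not correct: $\pi_{T,T'}$ is \emph{not} an inclusion of strata or the restriction of a map on $X$; it is the projection of a $(\PP^1)^{|T'-T|}$-bundle built out of the Tian--Xiao isomorphisms with quaternionic Shimura varieties.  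There is no universal abelian scheme over $X_{T'}$ whose pullback along $\pi_{T,T'}$ is the one over $X_T$, so the Verschiebung argument does not apply.  The identifications $\pi_{T,T'}^*\omega_\gamma \cong \omega_\gamma$ come from matching both sides with $\eta_\gamma$ on $X_{G,K_T}$, and carrying specific Hasse-type sections through that chain is exactly what is not obvious.  Worse, your recipe for which factor to use (${\bf h}$ versus ${\bf b}$) at a given $\gamma_j$ depends on whether $\gamma_j \in T$; when you pass to $T'$ with $\gamma_j \in T'\setminus T$, the recipe \emph{changes type}, so even if individual factors pulled back correctly, the two products would not be term-by-term comparable.

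The paper sidesteps both issues with a single move: it defines the product once and for all on the \emph{maximal} stratum $X_{T^0}$ (where the choice of factor at each step is fixed), and then \emph{defines} ${\bf h}_\beta^{\beta'}(T) := \pi_{T,T^0}^*$ of that product.  Compatibility for any intermediate $T \subseteq T' \subseteq T^0$ is then immediate from the cocycle relation $\pi_{T,T^0} = \pi_{T',T^0}\circ\pi_{T,T'}$ (equation~(\ref{eqn: cycle}) and Lemma~\ref{lemma: pullback}), with no need to compare Hasse invariants living on different strata.  The exponents also require the alternating signs $\delta_i = (-1)^{m_{\sigma^{-i}\beta}}$, which you allude to but do not pin down; this is needed so that the ${\bf b}$-factors (which contribute $+e_\gamma$ rather than $-e_\gamma$) still telescope, and so that negative exponents only ever land on the invertible ${\bf b}$-sections.
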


\begin{proof} For $\tau \in \BB_\gerp$, let $m_\tau$ be the smallest $j \geq 0$ such that $\sigma^{j}\tau \not \in T^0$, and for $0 \leq i \leq m$,  set $\delta_i=(-1)^{m_{\sigma^{-i}\beta}}$.  Set ${\bf f}_\tau={\bf b}_\tau(T^0)$ if $\tau \in T^0$, and ${\bf f}_\tau={\bf h}_\tau(T^0)$ otherwise. Define $${\bf h}_\beta^{\beta'}(T)=\pi_{T,T^0}^*\displaystyle\prod_{0 \leq i < m} ({\bf f}_{\sigma^{-i}\beta})^{\delta_i p^i},$$
 where $ \prod_{0 \leq i < m} ({\bf f}_{\sigma^{-i}\beta})^{\delta_i p^i}$ is a (nonzero) section of  $\omega^{h_\beta^{\beta'}}$ on $X_{T^0}$, noting that the exponent $\delta_ip^i$ is negative only if $\sigma^{-i}\beta \in T_0$, implying ${\bf f}_{\sigma^{-i}\beta}^{-1}={\bf b}_{\sigma^{-i}\beta}(T^0)^{-1}$  is defined on $X_{T^0}$. By Lemma \ref{lemma: pullback}, it follows that  ${\bf h}_\beta^{\beta'}(T)$ is a nonzero section of $\omega^{h_\beta^{\beta'}}$ on $X_T$. The second statement then follows from the definition of the sections and the compatibility assertion in the lemma.
\end{proof}

\begin{lemma}\label{lemma: intersect} Let $T\subseteq \BB$, and $\gerp |p$. Let $\beta\in \calH(T_\gerp)$, and $\beta' \in \calT(T_\gerp)$. Then the vanishing locus of ${\bf h}_\beta^{\beta'}(T)$ in $X_T$ intersects every connected component of $X_T$ in a nonempty smooth divisor.
\end{lemma}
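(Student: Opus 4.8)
The plan is to pull ${\bf h}_\beta^{\beta'}(T)$ back to a stratum on which it becomes, up to nowhere‑vanishing factors, a single partial Hasse invariant, and then transport the statement back. Concretely, by Proposition~\ref{prop: sections} we have ${\bf h}_\beta^{\beta'}(T)=\pi_{T,T^0}^*g$ with $g=\prod_{0\le i<m}({\bf f}_{\sigma^{-i}\beta})^{\delta_i p^i}\in H^0(X_{T^0},\omega^{h_\beta^{\beta'}})$, where $\pi_{T,T^0}\colon X_T\to X_{T^0}$ is a composition of projections of $\PP^1$‑bundles. Such a morphism is smooth, surjective, and has geometrically connected fibres; hence it induces a bijection on connected components, pulls effective Cartier divisors back to effective Cartier divisors, pulls smooth closed subschemes back to smooth ones, and a component of $X_T$ meets a preimage exactly when its (full) image component meets the original subscheme. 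So it suffices to show that the vanishing locus of $g$ in $X_{T^0}$ meets every connected component in a non‑empty smooth divisor.

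The next step is to identify that vanishing locus. In the product defining $g$, a factor ${\bf f}_\tau$ with $\tau\in T^0$ equals ${\bf b}_\tau(T^0)$ and is nowhere vanishing, whereas for $\tau\notin T^0$ we have ${\bf f}_\tau={\bf h}_\tau(T^0)$, whose zero scheme is the reduced Goren--Oort stratum $X_{T^0\cup\{\tau\}}$. The term $i=0$ is ${\bf f}_\beta={\bf h}_\beta(T^0)$ occurring to the first power, since $\beta\notin T^0$ forces $\delta_0=1$. The key combinatorial point is that for $0<i<m$ one has $\sigma^{-i}\beta\in\tilde T$ --- this reflects the fact that, with $\beta\in\calH(T_\gerp)$ and $\beta'\in\calT(T_\gerp)$, the descent from $\beta$ to $\beta'$ does not leave $\tilde T$ before reaching $\beta'$, and it is exactly what makes the defining product of Proposition~\ref{prop: sections} a section of $\omega^{h_\beta^{\beta'}}$ --- and hence $\sigma^{-i}\beta\in T^0$, so all factors but the $i=0$ one are nowhere vanishing. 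Therefore the zero scheme of $g$ in $X_{T^0}$ is exactly $X_{T^0\cup\{\beta\}}$; equivalently, the vanishing locus of ${\bf h}_\beta^{\beta'}(T)$ in $X_T$ is the Goren--Oort stratum $X_{T\cup\{\beta\}}=\pi_{T,T^0}^{-1}(X_{T^0\cup\{\beta\}})$.

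It then remains to check that $X_{T^0\cup\{\beta\}}$ is a smooth divisor in $X_{T^0}$ meeting every connected component. Smoothness and the divisor property are immediate: $X_{T^0\cup\{\beta\}}$ is smooth and equi‑dimensional of codimension $|T^0|+1$ in $X$, hence of codimension $1$ in the smooth scheme $X_{T^0}$, and this persists after pullback along the smooth surjection $\pi_{T,T^0}$. For the last assertion I would use the Tian--Xiao description $X_{T^0}\cong\prod_{\gamma\in\tilde T-T^0}\PP(\calV_\gamma)$ over the quaternionic Shimura variety $X_{G,K_T}$: when $\beta\in\tilde T-T^0$ the stratum $X_{T^0\cup\{\beta\}}$ is cut out inside the $\PP(\calV_\beta)$‑factor by a section over $X_{G,K_T}$, hence surjects onto $X_{G,K_T}$ and meets every component; when $\beta\notin\tilde T$ the line bundle $\omega_{\sigma^{-1}\beta}^{\otimes p}\otimes\omega_\beta^{-1}$ and the section ${\bf h}_\beta(T^0)$ descend along $\pi_{T^0}$ (the intermediate indices being eliminated via the ${\bf b}$‑isomorphisms), the descent being, up to a nonzero scalar, a partial Hasse invariant of $X_{G,K_T}$, whose zero locus is a non‑empty stratum meeting every connected component. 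Since all the strata in play have canonically the same set of connected components, the claim follows. This last point --- non‑emptiness of the resulting stratum on every connected component --- is the step I expect to be the real obstacle: via Tian--Xiao it reduces to non‑emptiness of (deeper) Goren--Oort strata on quaternionic Shimura varieties together with the compatibility of their connected‑component sets, which is the geometric core of the theory; the combinatorial reduction in the second paragraph (that only the $i=0$ factor of $g$ can vanish) is the other point that needs to be carried out with care.
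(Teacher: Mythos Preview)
Your overall strategy---reduce to $X_{T^0}$ via the $\PP^1$-bundle map $\pi_{T,T^0}$ and analyze the vanishing locus of $g=\prod_{0\le i<m}({\bf f}_{\sigma^{-i}\beta})^{\delta_ip^i}$ there---is exactly the paper's.  The gap is in the second paragraph: the claim that $\sigma^{-i}\beta\in\tilde T$ for all $0<i<m$ is false in general, so the identification of $Z(g)$ with the single stratum $X_{T^0\cup\{\beta\}}$ fails.  For a concrete counterexample take $|\BB_\gerp|=5$, $T_\gerp=\{\beta_1\}$ (so $\tilde T_\gerp=\{\beta_0,\beta_1\}$), $\beta=\beta_4\in\calH(T_\gerp)$, $\beta'=\beta_2\in\BB_\gerp-\tilde T_\gerp\subset\calT(T_\gerp)$, hence $m=2$.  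Then $\sigma^{-1}\beta=\beta_3\notin\tilde T$, so $T^0=T$ and $g={\bf h}_{\beta_4}(T)\cdot{\bf h}_{\beta_3}(T)^{p}$ is a product of \emph{two} partial Hasse invariants on $X_{T^0}$.  Your stated justification---that the combinatorics ``is exactly what makes the defining product a section of $\omega^{h_\beta^{\beta'}}$''---is not a valid reason: the weight telescopes regardless, since the signs $\delta_i$ are engineered precisely so that the intermediate $p^ie_{\sigma^{-i}\beta}$ terms cancel whether the factor at step $i$ is an ${\bf h}$ or a ${\bf b}$.

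What actually happens on $X_{T^0}$ is that the factors with $\sigma^{-i}\beta\in T^0$ are the nowhere-vanishing ${\bf b}$'s, while each factor with $\sigma^{-i}\beta\notin T^0$ is a partial Hasse invariant occurring to a \emph{positive} power (since $\sigma^{-i}\beta\notin T^0$ forces $m_{\sigma^{-i}\beta}=0$, hence $\delta_i=1$).  Thus $Z(g)$ is the union of the corresponding Goren--Oort strata $X_{T^0\cup\{\sigma^{-i}\beta\}}$, and the paper simply invokes \cite[Lemma~4.1]{DK1} to conclude that each such stratum (in particular the $i=0$ one) meets every connected component of $X_{T^0}$.  Your longer Tian--Xiao argument for this last point is not needed; the reference handles it directly.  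Note also that what the applications (Lemma~\ref{lemma: intsum}) actually require is only that the vanishing locus be purely of codimension one and meet every component---both of which survive once you drop the incorrect single-stratum identification.
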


\begin{proof} First note that if $\iota: Y_1 \rightarrow Y_2$ is a product of $\PP^1$- bundles, and $f$ a section on $Y_2$, the vanishing locus of which intersects every connected component of $Y_2$ in a nonempty smooth divisor, then the vanishing locus of $\iota^*f$ intersects every connected component of $Y_1$ in  a nonempty smooth divisor. Hence, in the notation of Proposition \ref{prop: sections}, it is enough to prove the statement for ${\bf h}_\beta^{\beta'}(T^0)$. This follows from  Lemma 4.1 of \cite{DK1}.  
\end{proof}

Let $ Z$ be a closed subscheme of $X$. We define $C(Z,\omega^\bullet)$ to be the cone in $\ZZ^\BB$ generated by $$\{\kappa \in \BB^\ZZ:  H^0(Z,\omega^\kappa) \neq 0\}.$$
Let $\calC(Z,\omega^\bullet)$ be the $\QQ^{\geq 0}$-saturation of $C(Z,\omega^\bullet)$.
When $Z=X_T$, we denote $C(X_T,\omega^\bullet), \calC(X_T,\omega^\bullet)$ by $C_T, \calC_T$, respectively. We always have
$$\langle \pm b_\beta: \beta \in T; h_\beta: \beta \not\in T \rangle \subset C_T,$$
appearing as weights of $\{{\bf h}_\beta(T): \beta \not\in T\}$ and $\{ {\bf b}_\beta(T)^{\pm 1}: \beta \in T\}$ on $X_T$.
 
 Let $T = \sqcup_{\gerp|p} T_\gerp$, where for each $\gerp|p$, $T_\gerp \subseteq \BB_\gerp$. Let $\tilde{T}=\sqcup_{\gerp|p} \tilde{T}_\gerp$ be as in \S \ref{section: morphisms} 
\begin{dfn}\label{dfn: D cones} Let $\gerp|p$. 
Recall that $\calH(T_\gerp)=\BB_\gerp-T_\gerp$, and $\calT(T_\gerp)=(\BB_\gerp-\tilde{T}_\gerp) \cup \sigma(\tilde{T}_\gerp-T_\gerp)$. We define
$$\calG(T_\gerp)=\{h_\beta^{\beta'}:  \beta \in \calH(T_\gerp), \beta' \in \calT(T_\gerp)\} \cup \{\pm b_\beta: \beta \in T_\gerp \}.$$
We define $D_{T_\gerp}$ to be the $\ZZ^{\geq 0}$-cone generated by $\calG(T_\gerp)$, and $D_T$ the $\ZZ^{\geq 0}$-cone generated by $D_{T_\gerp}$ for all $\gerp|p$. We define $\calD_{T_\gerp}$ and $\calD_T$ to be the  $\QQ^{\geq 0}$-saturation of $D_{T_\gerp}$ and $D_T$, respectively.
\end{dfn}

\begin{cor}\label{cor: lower bound} For any $T \subseteq \BB$, we have $D_T \subseteq C_T$.
\end{cor}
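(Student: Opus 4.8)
The plan is to show that every generator of the cone $D_T$ actually arises as the weight of a nonzero section of some $\omega^\kappa$ on $X_T$, since $C_T$ is by definition the cone generated by all such weights. Recall from Definition \ref{dfn: D cones} that $D_T$ is generated (over the various $\gerp | p$) by the elements of $\calG(T_\gerp) = \{h_\beta^{\beta'} : \beta \in \calH(T_\gerp),\ \beta' \in \calT(T_\gerp)\} \cup \{\pm b_\beta : \beta \in T_\gerp\}$. So it suffices to exhibit, for each such generator, a nonzero global section of the corresponding line bundle on $X_T$.

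First I would dispatch the generators $\pm b_\beta$ for $\beta \in T_\gerp$: by Definition \ref{dfn: b}, the section ${\bf b}_\beta(T) \in H^0(X_T, \omega^{b_\beta})$ is \emph{nowhere vanishing}, hence so is its inverse ${\bf b}_\beta(T)^{-1} \in H^0(X_T, \omega^{-b_\beta})$; this puts both $b_\beta$ and $-b_\beta$ in $C_T$. Next, for the generators $h_\beta^{\beta'}$ with $\beta \in \calH(T_\gerp)$ and $\beta' \in \calT(T_\gerp)$: Proposition \ref{prop: sections} produces precisely a nonzero section ${\bf h}_\beta^{\beta'}(T) \in H^0(X_T, \omega^{h_\beta^{\beta'}})$, so $h_\beta^{\beta'} \in C_T$. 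Since $C_T$ is a $\ZZ^{\geq 0}$-cone (closed under nonnegative integer combinations — sections multiply), and $D_T$ is the $\ZZ^{\geq 0}$-cone generated by the union of all $\calG(T_\gerp)$ over $\gerp | p$, it follows that $D_T \subseteq C_T$. (Here one uses that a nonzero section of $\omega^{\kappa_1}$ and a nonzero section of $\omega^{\kappa_2}$ have a nonzero product in $H^0(X_T, \omega^{\kappa_1 + \kappa_2})$, which holds because $X_T$ is reduced and, being equi-dimensional and smooth, each connected component is integral so $H^0$ of a line bundle is torsion-free.)

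The only point requiring a little care — and the one I would flag as the main (minor) obstacle — is the multiplicativity of nonvanishing of sections, i.e., that the product of two nonzero sections over $X_T$ is again nonzero. This is where we invoke that $X_T$ is smooth and equi-dimensional (stated in \S\ref{section: HMV}), so its connected components are integral; then $H^0(C, \calL)$ is a torsion-free module over the domain $H^0(C, \calO_C)$ for each component $C$, and a product of nonzero sections is nonzero on each component, hence on $X_T$. Everything else is a direct appeal to the already-established existence statements (Definition \ref{dfn: b} for the $b_\beta$, Proposition \ref{prop: sections} for the $h_\beta^{\beta'}$), so the corollary is essentially a bookkeeping consequence of those two results together with the definition of $C_T$.
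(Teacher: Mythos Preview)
Your proof is correct and follows the same approach as the paper's (which is even terser, citing only Proposition~\ref{prop: sections}). Note that your discussion of multiplicativity is unnecessary: $C_T$ is by \emph{definition} the cone generated by the set $\{\kappa : H^0(X_T,\omega^\kappa)\neq 0\}$, hence closed under addition regardless of whether products of nonzero sections remain nonzero.
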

\begin{proof} It is enough to prove that $D_{T_\gerp} \subseteq C_{T}$, for all $\gerp|p$, which follows from Proposition \ref{prop: sections}.
\end{proof}

\section{ Reduced cones and optimal bases}\label{section: optimal}

We  provide an optimal basis for $\calD_{T}$. Fix $\gerp|p$. First assume that $\BB_\gerp-\tilde{T_\gerp}\neq \emptyset$. Recall $n_\beta$ from Definition \ref{definition: mu}. We define 
\begin{eqnarray*}\calG'(T_\gerp) = \{h_{\sigma^{n_\beta}\beta}^\beta: \beta \in \BB_\gerp-\tilde{T_\gerp} \} \cup \{-b_{\sigma^{n_\beta}\beta}^{\beta}: \beta \in \tilde{T_\gerp}-T_\gerp\}  \cup \{\pm b_\beta: \beta \in T_\gerp\}.\end{eqnarray*}
When  $\BB_\gerp-\tilde{T_\gerp}=\emptyset$,  we define
$$\calG'(T_\gerp)= \{-e_{\beta}: \beta \in \BB-T_\gerp\}  \cup \{\pm b_\beta: \beta \in T_\gerp\}.$$
In either case, we define $D'_{T_\gerp}$ to be the $\ZZ^{\geq 0}$-cone generated by $\calG'(T_\gerp)$ in $\ZZ^{\BB_\gerp}$, and $\calD'_{T_\gerp}$ to be the $\QQ^{\geq 0}$-saturation of it. Let $\calD'_{T}$ be the $\QQ^{\geq 0}$-cone generated by $\{\calD'_{T_\gerp}: \gerp|p\}$ in $\QQ^\BB$.

\begin{prop} \label{prop: optimal} We have $\calD'_{T}=\calD_{T}$, for all $T \subseteq  \BB$.
\end{prop}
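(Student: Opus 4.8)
The plan is to work prime-by-prime: since both $\calD_T$ and $\calD'_T$ are defined as the $\QQ^{\geq 0}$-cones generated by their local pieces $\calD_{T_\gerp}$, $\calD'_{T_\gerp}$ inside $\QQ^\BB = \bigoplus_\gerp \QQ^{\BB_\gerp}$, it suffices to prove $\calD'_{T_\gerp} = \calD_{T_\gerp}$ for each $\gerp \mid p$, and from now on I fix such a $\gerp$ and suppress it from the notation. There are two cases, as in the definition of $\calG'(T_\gerp)$.

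\emph{Case 1: $\BB_\gerp - \tilde T_\gerp \neq \emptyset$.} The inclusion $\calD'_{T_\gerp} \subseteq \calD_{T_\gerp}$ is the easy direction: I must check each generator in $\calG'(T_\gerp)$ lies in $\calD_{T_\gerp}$. The elements $\pm b_\beta$ for $\beta \in T_\gerp$ are literally generators of $\calG(T_\gerp)$. For $\beta \in \BB_\gerp - \tilde T_\gerp$, I claim $\sigma^{n_\beta}\beta \in \calH(T_\gerp)$ and $\beta \in \calT(T_\gerp)$: indeed $\beta \notin \tilde T_\gerp$ so $\beta \in \BB_\gerp - \tilde T_\gerp \subseteq \calT(T_\gerp)$, while $\sigma^{n_\beta}\beta \in \BB_\gerp - \tilde T_\gerp \subseteq \BB_\gerp - T_\gerp = \calH(T_\gerp)$ by the minimality defining $n_\beta$; hence $h_{\sigma^{n_\beta}\beta}^\beta \in \calG(T_\gerp)$. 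For $\beta \in \tilde T_\gerp - T_\gerp$, note $\sigma\beta \notin T_\gerp$ would put $\sigma\beta$ in $\calT(T_\gerp)$ directly — but more to the point I use the identity $-b_{\sigma^{n_\beta}\beta}^\beta = h_{\sigma^{n_\beta}\beta}^\beta - 2p^{n_\beta}e_\beta$... cleaner: the chain structure gives $-b_{\sigma^{n_\beta}\beta}^{\beta}$ as a $\ZZ^{\geq0}$-combination of $h$'s and $b$'s already in $\calG(T_\gerp)$, via the telescoping identity \eqref{equation: Hasse} together with $-b_{\sigma\tau}^\tau = h_{\sigma\tau}^\tau - 2pe_\tau$ unwound along the chain from $\beta$ up to the first index outside $\tilde T_\gerp$ (all intermediate indices lie in $\tilde T_\gerp - T_\gerp$, so their $\pm b$ and the relevant $h$ are available). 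This is where the bookkeeping is densest; I would organize it as a single lemma on chains.

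For the reverse inclusion $\calD_{T_\gerp} \subseteq \calD'_{T_\gerp}$, I take an arbitrary generator $h_\beta^{\beta'}$ with $\beta \in \calH(T_\gerp)$, $\beta' \in \calT(T_\gerp)$, $\beta' = \sigma^{-m}\beta$, $0 < m \leq |\BB_\gerp|$, and express it as a $\QQ^{\geq 0}$-combination of $\calG'(T_\gerp)$ (the $\pm b_\beta$, $\beta \in T_\gerp$, again transfer directly). Write $h_\beta^{\beta'} = -e_\beta + p^m e_{\beta'}$. The strategy is to run the same product decomposition used in the proof of Proposition~\ref{prop: sections}: telescoping along the segment $\{\sigma^{-i}\beta : 0 \leq i \leq m\}$, one writes $h_\beta^{\beta'}$ as $\sum_{0 \leq i < m} \delta_i p^i\, w_{\sigma^{-i}\beta}$ where $w_\tau = b_\tau$ if $\tau \in T$ and $w_\tau = h_\tau$ (i.e. $h_\tau^{\sigma^{-1}\tau}$) if $\tau \notin T$, and $\delta_i = \pm1$ — this is an identity in $\ZZ^{\BB_\gerp}$, essentially \eqref{equation: Hasse} plus the $\pm b$ corrections, and all coefficients $\delta_i p^i$ are non-negative except where $\sigma^{-i}\beta \in T_\gerp$, where $-b$ is allowed. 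Now each $w_\tau$ with $\tau \notin T_\gerp$ is $h_\tau^{\sigma^{-1}\tau}$ with $\tau, \sigma^{-1}\tau$ adjacent; I further re-expand such single-step Hasse weights at indices in $\tilde T_\gerp - T_\gerp$ using the generators $-b_{\sigma^{n_\bullet}\bullet}^\bullet$ and $h_{\sigma^{n_\bullet}\bullet}^\bullet$ of $\calG'(T_\gerp)$ — again by \eqref{equation: Hasse} run forward to the first index in $\BB_\gerp - \tilde T_\gerp$ — leaving only the generators in $\calG'(T_\gerp)$. The key structural fact making this terminate correctly is that $\beta' \in \calT(T_\gerp)$ means $\beta'$ is either already outside $\tilde T_\gerp$ or is $\sigma$ of something in $\tilde T_\gerp - T_\gerp$, so the "tail" of the telescope lands on an allowed generator.

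\emph{Case 2: $\BB_\gerp = \tilde T_\gerp$.} Then $\calG'(T_\gerp) = \{-e_\beta : \beta \in \BB_\gerp - T_\gerp\} \cup \{\pm b_\beta : \beta \in T_\gerp\}$, and $\calT(T_\gerp) = \sigma(\tilde T_\gerp - T_\gerp) = \sigma(\BB_\gerp - T_\gerp)$. Here I use Corollary~\ref{cor: torsion}: over $X_T$ (hence at the level of the cone $\calD_{T_\gerp}$, once one checks the torsion relation $\omega_\beta^{(-p)^{|\BB_\gerp|}-1} \cong \calO_{X_T}$ holds — but note $\calD$ is defined purely combinatorially, so I instead observe directly that $p^{|\BB_\gerp|}e_\beta + e_\beta = b$-type relations force) the relation $((-p)^{|\BB_\gerp|} - 1)e_\beta \in D_{T_\gerp}$ for $\beta \in \BB_\gerp$; combined with each $h_\beta^{\beta'} = -e_\beta + p^m e_{\beta'}$ one solves a linear system (the matrix is invertible over $\QQ$ by the torsion relation) to get each $-e_\beta \in \calD_{T_\gerp}$ and conversely each $h_\beta^{\beta'}$ as a $\QQ^{\geq0}$-combination of the $-e_\beta$'s. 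The $\pm b_\beta$ again match up. One must be slightly careful that if $T_\gerp = \BB_\gerp$ with $|\BB_\gerp|$ odd, $S(T)$ picks up $\gerp$ and $\tilde T_\gerp = \BB_\gerp = T_\gerp$, so $\BB_\gerp - T_\gerp = \emptyset$ and both cones are just $\langle \pm b_\beta : \beta \in \BB_\gerp\rangle$, which is immediate.

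\emph{Main obstacle.} The genuine difficulty is entirely in Case~1: verifying that the telescoping rewriting of an arbitrary $h_\beta^{\beta'}$ ($\beta' \in \calT(T_\gerp)$ being the crucial hypothesis) produces only \emph{non-negative} coefficients on the $\calG'(T_\gerp)$ generators, given that the $b$-corrections introduce signs $\delta_i$. The right framework is a clean chain-combinatorics lemma describing $\tilde T_\gerp$ as a disjoint union of chains $\tilde C_j$ and tracking, for each index $\tau$ on the segment from $\beta'$ to $\beta$, the parity $m_\tau$ of the distance to the next index outside $\tilde T_\gerp$; the signs $\delta_i$ are determined by these parities, and the claim is that they conspire exactly so that every negative coefficient sits on an index in $T_\gerp$ (where $-b$ is permitted) or cancels. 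I would isolate and prove this sign lemma first, and then both inclusions in Case~1 become essentially formal substitutions. I expect no difficulty in the compatibility/globalization step across $\gerp \mid p$, nor in Case~2 modulo the elementary linear algebra over $\QQ$.
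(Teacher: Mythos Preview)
Your overall architecture (prime-by-prime, split on whether $\BB_\gerp = \tilde T_\gerp$) matches the paper's, and your easy inclusion $\calD'_{T_\gerp}\subseteq\calD_{T_\gerp}$ is essentially the same: the paper writes $-b_{\sigma^{n_\beta}\beta}^\beta = h_{\sigma^{n_\beta}\beta}^{\sigma\beta} - p^{n_\beta-1}b_{\sigma\beta}$ in one line (using that $\sigma\beta\in T_\gerp$ when $\beta\in\tilde T_\gerp-T_\gerp$), and in the case $\tilde T_\gerp=\BB_\gerp$ uses $-(1+p^{|\BB_\gerp|})e_\beta = h_\beta^{\sigma\beta} - p^{|\BB_\gerp|-1}b_{\sigma\beta}$, which is the concrete form of your ``linear system'' remark.

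Where you diverge is the harder inclusion $\calD_{T_\gerp}\subseteq\calD'_{T_\gerp}$. You propose to telescope an arbitrary $h_\beta^{\beta'}$ along the entire segment $\{\sigma^{-i}\beta\}$ into single-step $h$'s and $b$'s, then re-expand, and you correctly flag the resulting sign bookkeeping as the main obstacle without actually discharging it. The paper sidesteps this entirely with one preliminary observation: it first shows that $e_\beta\in\calD'_{T_\gerp}$ for every $\beta\in\BB_\gerp-\tilde T_\gerp$ (since $(p^{|\BB_\gerp|}-1)e_\beta = h_\beta^\beta$ is a sum of the ``optimal'' generators via \eqref{equation: Hasse}) and $-e_\beta\in\calD'_{T_\gerp}$ for every $\beta\in\tilde T_\gerp-T_\gerp$ (since $-p^{n_\beta}e_\beta = -b_{\sigma^{n_\beta}\beta}^\beta + e_{\sigma^{n_\beta}\beta}$). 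With these $\pm e_\beta$ in hand, the paper does a clean four-case split on the pair $(\beta,\beta')\in\calH(T_\gerp)\times\calT(T_\gerp)$ according to whether each lies in $\BB_\gerp-\tilde T_\gerp$ or in $\tilde T_\gerp-T_\gerp$ (resp.\ $\sigma(\tilde T_\gerp-T_\gerp)$), and in each case writes $h_\beta^{\beta'}$ as a two- or three-term nonnegative combination of $\pm e_\beta$, $\pm e_{\beta'}$ (or $\pm e_{\sigma^{-1}\beta'}$), and $b_{\beta'}$. No parity tracking along chains is needed.

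So your proposal is not wrong, but the ``sign lemma'' you defer is exactly the work, and the paper shows it is avoidable: the $\pm e_\beta$ step linearizes the problem and reduces the case analysis to one-line identities.
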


\begin{proof}  We begin by proving  $\calD'_{T_\gerp}\subseteq \calD_{T_\gerp}$. First, assume $\BB_\gerp-\tilde{T_\gerp}\neq \emptyset$. Clearly, if $\beta \in \BB_\gerp-\tilde{T}_\gerp$, then $h_{\sigma^{n_\beta}\beta}^\beta$ lies in $\calD_{T_\gerp}$. If $\beta \in \tilde{T}_\gerp-T_\gerp$, then 
\[
-b_{\sigma^{n_\beta}\beta}^\beta=h_{\sigma^{n_\beta}\beta}^{\sigma\beta}-p^{(n_\beta-1)}b_{\sigma\beta}.
\]
Since $\sigma^{n_\beta}\beta \in \BB_\gerp-\tilde{T}_\gerp$, and $\sigma\beta \in \sigma(\tilde{T}_\gerp-T_\gerp) \subset T_\gerp$, it follows that both $h_{\sigma^{n_\beta}\beta}^{\sigma\beta}$ and $-b_{\sigma\beta}$ lie in $\calD_{T_\gerp}$, and, hence, so does $-b_{\sigma^{n_\beta}\beta}^\beta$.  If $\BB_\gerp-\tilde{T_\gerp}=\emptyset$, then for $\beta \in \BB_\gerp-T_\gerp=\tilde{T}_\gerp-T_\gerp$, we have $-(1+p^{|\BB_\gerp|})e_\beta=h_\beta^{\sigma\beta}-{p^{|\BB_\gerp|-1}}b_{\sigma\beta} \in D_{T_\gerp}$, and hence $-e_\beta \in \calD_{T_\gerp}$.

We now prove the reverse inclusion. Note that for all $\beta \in \BB_\gerp-\tilde{T}_\gerp$, we have $(p^{|\BB_\gerp|}-1)e_\beta=h_\beta^\beta \in D'_{T_\gerp}$, and hence $e_\beta \in \calD'_{T_\gerp}$. Also, for all $\beta \in \tilde{T}_\gerp-T_\gerp $, we have $-e_\beta \in \calD'_{T_\gerp}$. This is clear if $\tilde{T}_\gerp=\BB_\gerp$. Otherwise, note that $-p^{n_\beta}e_\beta=-b_{\sigma^{n_\beta}\beta}^\beta+e_{\sigma^{n_\beta}\beta} \in \calD_{T'_\gerp}$, as $\sigma^{n_\beta}\beta\in \BB_\gerp-\tilde{T}_\gerp$, whence $e_{\sigma^{n_\beta}\beta} \in \calD'_{T_\gerp}$

To prove $\calD_{T_\gerp}\subset \calD'_{T_\gerp}$, we consider $h_{\beta}^{\beta'}$ in four cases:

\noindent Case 1: $\beta,\beta' \in \BB_\gerp-\tilde{T}_\gerp$. By Identity \ref{equation: Hasse}, we have 
$\langle h_{\sigma^{n_\beta}\beta}^{\beta}: \beta \in \BB_\gerp-\tilde{T_\gerp} \rangle = \langle h_\beta^{\beta'}: \beta,\beta' \in \BB_\gerp-\tilde{T}_\gerp\rangle$. Hence $h_\beta^{\beta'} \in D'_{T_\gerp}$ by definition.

\vspace{2mm}

\noindent Case 2: $\beta \in \BB_\gerp-\tilde{T}_\gerp, \beta' \in \sigma(\tilde{T}_\gerp-T_\gerp)\subseteq T_\gerp$. By Identity \ref{equation: Hasse}, it is enough to assume $\beta=\sigma^{n_{\beta'}}{\beta'}$, in which case, $h_\beta^{\beta'}=-b_\beta^{\sigma^{-1}\beta'} +p^{n_{\beta'}}b_{\beta'} \in \calD'_{T_\gerp}$.

\vspace{2mm}

\noindent Case 3: $\beta \in \tilde{T}_\gerp-T_\gerp, \beta' \in \sigma(\tilde{T}_\gerp-T_\gerp)\subseteq T_\gerp$. Assume $\sigma^n\beta'=\beta$. In this case, we  have $h_\beta^{\beta'}=-e_\beta-p^{n+1}e_{\sigma^{-1}\beta'}+p^nb_{\beta'} \in \calD'_{T_\gerp}$, since $-e_\beta, -e_{\sigma^{-1}\beta'}, b_{\beta'} \in \calD'_{T_\gerp}$.

\vspace{2mm}

\noindent Case 4: $\beta \in \tilde{T}_\gerp-T_\gerp, \beta' \in {\BB}_\gerp-\tilde{T}_\gerp$. Assume $\sigma^n\beta'=\beta$. We have $h_\beta^{\beta'}=-e_{\beta}+ p^ne_{\beta'} \in \calD'_{T_\gerp}$, since $-e_\beta, e_{\beta'} \in \calD'_{T_\gerp}$.
   \end{proof}

\begin{lemma} \label{lemma: no boxes} When $\tilde{T}=T$, we have $\calD_{T}=\langle h_\beta : \beta \in \BB-T \rangle + \langle \pm b_\beta: \beta \in T \rangle.$
\end{lemma}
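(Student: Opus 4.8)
The plan is to deduce this directly from the optimal-basis description in Proposition~\ref{prop: optimal}. When $\tilde{T}=T$, we have $\tilde{T}_\gerp=T_\gerp$ for every $\gerp\mid p$, so $\tilde{T}_\gerp-T_\gerp=\emptyset$, and the defining generating set $\calG'(T_\gerp)$ collapses: in the case $\BB_\gerp-\tilde{T}_\gerp\neq\emptyset$ it becomes exactly $\{h_{\sigma^{n_\beta}\beta}^\beta:\beta\in\BB_\gerp-T_\gerp\}\cup\{\pm b_\beta:\beta\in T_\gerp\}$, with the middle family $\{-b^\beta_{\sigma^{n_\beta}\beta}\}$ empty; in the case $\BB_\gerp=\tilde{T}_\gerp$ it becomes $\{-e_\beta:\beta\in\BB_\gerp-T_\gerp\}\cup\{\pm b_\beta:\beta\in T_\gerp\}$, but this second case cannot actually occur here since $\BB_\gerp=\tilde T_\gerp=T_\gerp$ forces $\BB_\gerp-T_\gerp=\emptyset$, leaving just $\{\pm b_\beta:\beta\in\BB_\gerp\}$. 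So in all cases $\calD_{T_\gerp}$ (equivalently $\calD'_{T_\gerp}$) is generated by $\{h_{\sigma^{n_\beta}\beta}^\beta:\beta\in\BB_\gerp-T_\gerp\}\cup\{\pm b_\beta:\beta\in T_\gerp\}$, where now $n_\beta=\nu_{\sigma\beta}$ is just the length of the "gap" in $T_\gerp$ past $\beta$ — note $\sigma^i\beta\in T_\gerp$ for $0<i<n_\beta$ and $\sigma^{n_\beta}\beta\notin T_\gerp$.

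The remaining point is to show that, modulo the subgroup spanned by the $\pm b_\beta$ for $\beta\in T$, each $h_{\sigma^{n_\beta}\beta}^\beta$ is a nonnegative combination of the $h_\gamma$ for $\gamma\in\BB-T$, and conversely. I would use Identity~\ref{equation: Hasse} together with the relation $b_\gamma = h_\gamma + 2e_\gamma$ — more usefully, the observation that for $\gamma\in T_\gerp$ one has $-e_\gamma = \tfrac12(h_\gamma-b_\gamma)$ is not integral, so instead I would argue at the level of $\QQ^{\geq0}$-cones directly. Concretely: for $\beta\in\BB_\gerp-T_\gerp$ with $\sigma^i\beta\in T_\gerp$ for $0<i<n_\beta$, telescoping Identity~\ref{equation: Hasse} gives $h^\beta_{\sigma^{n_\beta}\beta} = h_{\sigma^{n_\beta}\beta} + p\,h_{\sigma^{n_\beta-1}\beta}+\cdots+p^{n_\beta-1}h_{\sigma\beta}$, and each intermediate term $h_{\sigma^i\beta}$ for $0<i<n_\beta$ has $\sigma^i\beta\in T$; rewriting $h_{\sigma^i\beta} = -e_{\sigma^i\beta}+pe_{\sigma^{i-1}\beta}$ and $b_{\sigma^i\beta}=e_{\sigma^i\beta}+pe_{\sigma^{i-1}\beta}$, one sees $2h_{\sigma^i\beta} = (b_{\sigma^i\beta}) - (2e_{\sigma^i\beta}) $ is awkward; the cleaner route is to observe that $h_{\sigma^i\beta}+b_{\sigma^i\beta}=2p\,e_{\sigma^{i-1}\beta}$ and $-h_{\sigma^i\beta}+b_{\sigma^i\beta}=2e_{\sigma^i\beta}$, so $h_{\sigma^i\beta}\in\langle -b_{\sigma^i\beta}, e_{\sigma^{i-1}\beta}\rangle$ up to scaling. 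I would therefore instead run the argument by showing the two $\QQ^{\geq0}$-cones have the same extreme rays: the ray through $h^\beta_{\sigma^{n_\beta}\beta}$ equals the ray through $h_{\sigma^{n_\beta}\beta}$ after adding suitable positive multiples of $b_{\sigma^i\beta}$ ($0<i<n_\beta$), using that those $b$'s are invertible on $X_T$ so both $+b$ and $-b$ are available; and conversely each $h_\gamma$ for $\gamma\in\BB-T$ is itself of the form $h^{\beta}_{\sigma^{n_\beta}\beta}$ for the unique $\beta\in\BB-T$ with $\sigma^{n_\beta}\beta=\gamma$, hence already a generator. This gives containment both ways.

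The main obstacle I anticipate is purely bookkeeping: making the telescoping of Identity~\ref{equation: Hasse} compatible with the sign/invertibility constraints (the $b_\beta$ are only available for $\beta\in T$, and one must check each intermediate index $\sigma^i\beta$ for $0<i<n_\beta$ does lie in $T$, which follows from the definition of $n_\beta$ and the hypothesis $\tilde T=T$). There is no serious geometric input needed beyond Proposition~\ref{prop: optimal}; the content is entirely the combinatorics of the cone generators, and I expect the verification that $\langle h_\beta:\beta\in\BB-T\rangle+\langle\pm b_\beta:\beta\in T\rangle$ is stable under the moves dictated by Identity~\ref{equation: Hasse} to be a short finite check once the indices are set up correctly.
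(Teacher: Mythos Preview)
Your overall strategy is the same as the paper's: invoke Proposition~\ref{prop: optimal} to get $\calD_T = \langle h_{\sigma^{n_\beta}\beta}^\beta : \beta \in \BB-T\rangle + \langle \pm b_\beta : \beta \in T\rangle$, and then show that each $h_{\sigma^{n_\beta}\beta}^\beta$ differs from $h_{\sigma^{n_\beta}\beta}$ by an element of $\langle \pm b_\gamma : \gamma \in T\rangle$. The paper does this in one line via the explicit identity
\[
h_{\sigma^{n_\beta}\beta}^\beta \;=\; h_{\sigma^{n_\beta}\beta} \;+\; \sum_{i=1}^{n_\beta-1} (-p)^{\,n_\beta - i}\, b_{\sigma^i\beta},
\]
which immediately gives both inclusions.

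What your plan is missing is the one piece of actual content that makes this identity (or any version of your ``bookkeeping'') close: the fact that $n_\beta$ is \emph{odd} whenever $\tilde T = T$. Indeed, your telescoping gives $h_{\sigma^{n_\beta}\beta}^\beta - h_{\sigma^{n_\beta}\beta} = p^{n_\beta} e_\beta - p\, e_{\sigma^{n_\beta-1}\beta}$, and solving for this in the $\QQ$-span of $\{b_{\sigma^i\beta} : 1 \le i \le n_\beta-1\}$ forces $c_i = (-p)^{n_\beta-i}$ from the top down, while the bottom coefficient demands $c_1 = p^{n_\beta-1}$; these agree if and only if $n_\beta$ is odd. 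So without the parity observation your linear algebra has no solution and the argument stalls. The reason $n_\beta$ is odd is that when $\tilde T = T$ every chain in $T_\gerp$ has even cardinality (this is exactly what $\tilde C = C$ means), and $\{\sigma\beta,\ldots,\sigma^{n_\beta-1}\beta\}$ is such a chain; you should state and use this rather than leaving it to unspecified bookkeeping.
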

\begin{proof} By Proposition \ref{prop: optimal}, we  have $\calD_T=\calD'_T=\langle h_{\sigma^{n_\beta}\beta}^\beta: \beta \in \BB-T \rangle  + \langle \pm b_\beta: \beta \in T \rangle$. The result follows since
for  $\beta \not \in T=\tilde{T}$, we have   $h_{\sigma^{n_\beta}\beta}^\beta=h_{\sigma^{n_\beta}\beta}+\Sigma_{i=1}^{n_\beta-1} {(-p)^{n_\beta-i}} b_{\sigma^i\beta}$, noting that each $n_\beta$ is odd in this case.
\end{proof}

For any $\QQ^{\geq 0}$-cone $\langle \pm b_\beta: \beta \in T \rangle \subset C \subset \QQ^{\BB}$, let $C^{T\!-{\rm red}}$ be the image of $C$ under
$$i_T: \QQ^{\BB} \rightarrow \QQ^{\BB-T},$$
which is defined as follows: for $\beta \in \BB_\gerp \subseteq \BB$, we have  $\pi(e_\beta)=0$ if $\BB_\gerp=T_\gerp$, and, otherwise,  $\pi(e_\beta)= (-p)^{\nu_\beta}e_{\sigma^{-\nu_\beta}\beta}$, where $\nu_\beta$ is given in Definition \ref{definition: mu}. Hence, 
\begin{eqnarray}\label{equation: L-beta} i_T (\kappa)=\sum_{\beta \in \BB-T} \big(\sum_{i=0}^{\mu_\beta-1} (-p)^i k_{\sigma^{i}\beta}\big )e_\beta.\end{eqnarray}

The kernel of $\pi$ is the cone generated by all $\{\pm b_\beta: \beta \in T_\gerp\}$, and  the obvious inclusion $j_T: C^{T\!-{\rm red}} \rightarrow C$ gives a section to $\pi$. Hence, we have $C=j_T(C^{T\!-{\rm red}})+ \langle \pm b_\beta: \beta \in T \rangle$. It follows that $D^{{T\!-{\rm red}}}_{T}$ is generated by $\calG^{T\!-{\rm red}}(T)=\cup_{\gerp|p} \calG_\gerp^{T\!-{\rm red}}(T)$, where,
 \begin{eqnarray*} \calG_\gerp^{T\!-{\rm red}}(T)=\{h_{\sigma^{n_\beta}{\beta}}^\beta: \beta \in \BB_\gerp-\tilde{T_\gerp} \} \cup \{-b_{\sigma^{n_\beta}\beta}^{\beta}: \beta \in \tilde{T_\gerp}-T_\gerp\}, \end{eqnarray*}

if $\BB_\gerp-\tilde{T_\gerp}\neq \emptyset$, and  $\calG_\gerp^{T\!-{\rm red}}(T)= \{-e_{\beta}: \beta \in \BB-T_\gerp\},$ if  $\BB_\gerp-\tilde{T_\gerp}=\emptyset$.

 \section {Explicit description of $\calD_T$}\label{section:explicit}
We will now obtain an explicit description of $\calD_T$ in terms of the weight components, i.e., we determine a basis for the dual cone of $\calD_T^{T-{\rm red}}$.  For $\beta \in \BB-\tilde{T}$, we set  $g_\beta=h_{\sigma^{n_\beta}\beta}^{\beta}$, if $\beta \in \BB-\tilde{T}$, and $g_\beta=b_\beta$ if $\beta \in T$.  For $\beta \in \tilde{T}_\gerp-T_\gerp$, we set  $g_\beta=-b_{\sigma^{n_\beta}\beta}^\beta$, if $\BB_\gerp-\tilde{T}_\gerp\neq\emptyset$, and $g_\beta=-e_\beta$, otherwise. By Proposition \ref{prop: optimal}, we have
$$
\calD_T=\langle g_\beta : \beta \in \BB-T \rangle   +  \langle \pm g_\beta: \beta \in T \rangle.$$

Let $\epsilon:\BB \rightarrow \{-1,0,1\}$ be a  map.  Let $T \subseteq \BB$, $\gerp|p$, and $\beta,\beta' \in \BB_\gerp$. Write $\beta'=\sigma^n\beta$ for some  $0 \leq n < |\BB_\gerp|$. define the functional
$$L^\epsilon[\beta,\beta'](\kappa)=\sum_{0 \leq i \leq n} \epsilon(\sigma^i\beta)p^ik_{\sigma^i\beta}.$$
We simply write $L^\epsilon[\beta]$ for $L^\epsilon[\beta,\sigma^{-1}\beta]$.

Given $T \subset \BB$,  we define its sign function $\epsilon_T: \BB \rightarrow \{-1,1\}$. If $\beta \in \BB_\gerp$ and $T_\gerp=\BB_\gerp$, we define $\epsilon_T(\beta)=0$. Otherwise, we define $\epsilon_T(\beta)=1$ for $\beta \in \BB_\gerp-\tilde{T}_\gerp$, and $\epsilon_T(\beta)=(-1)^{\mu_\beta-1}$ for  $\beta \in \tilde{T}_\gerp$. We define for $\beta \in \BB-T$:
\[
L_{T,\beta}=\begin{cases}
L^{\epsilon_T}[\sigma^{\mu_\beta}\beta] & {\rm if}\ \beta \in \BB-\tilde{T},\\
L^{\epsilon_T}[\beta,\sigma^{\mu_\beta-1}\beta]& {\rm if}\ \beta \in \tilde{T}-T.
\end{cases}
\]

\begin{prop} \label{prop:explicit} Let $T \subset \BB$. We have    
$$\calD_T=\{\kappa \in \QQ^\BB: L_{T,\beta}(\kappa) \geq 0,\ \forall \beta \in \BB-T\}.$$

\end{prop}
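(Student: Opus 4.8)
The plan is to prove the two inclusions between the explicitly-defined polyhedral cone $P_T := \{\kappa \in \QQ^\BB : L_{T,\beta}(\kappa) \geq 0,\ \forall \beta \in \BB - T\}$ and $\calD_T$. Since both are $\QQ^{\geq 0}$-cones, it suffices to work prime-by-prime, so fix $\gerp \mid p$ and reduce everything to $\QQ^{\BB_\gerp}$; I would handle the two regimes $\BB_\gerp - \tilde{T}_\gerp \neq \emptyset$ and $\BB_\gerp = T_\gerp = \tilde{T}_\gerp$ separately, the latter being where the ``torsion'' phenomenon of Corollary \ref{cor: torsion} forces $\calD_{T_\gerp}$ to contain $\pm e_\beta$ for all $\beta$.

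For the inclusion $\calD_T \subseteq P_T$: by Proposition \ref{prop: optimal} it is enough to check that each generator $g_\beta$ of $\calD'_T$ satisfies $L_{T,\beta'}(g_\beta) \geq 0$ for every $\beta' \in \BB - T$, and in fact to check that $L_{T,\beta'}(\pm g_\beta) = 0$ whenever $\beta \in T$. This is a direct computation: the functionals $L_{T,\beta'}$ are (signed, $p$-power-weighted) partial sums of consecutive weight-coordinates along a $\sigma$-orbit, and the generators $h_{\sigma^{n_\beta}\beta}^\beta$, $-b_{\sigma^{n_\beta}\beta}^\beta$, $\pm b_\beta$, $-e_\beta$ each have support on at most two (or few) consecutive coordinates, so pairing them is bookkeeping with the sign function $\epsilon_T$ and the telescoping Identity \ref{equation: Hasse}. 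The key point is that $\epsilon_T$ is precisely designed so that the sign of $g_\beta$ at its ``endpoints'' matches $\epsilon_T$ on $\BB_\gerp - \tilde{T}_\gerp$ and alternates correctly across $\tilde{T}_\gerp - T_\gerp$; one checks $L_{T,\beta'}(g_\beta) \in \{0, +\text{(positive)}\}$ case by case, where it is positive essentially only when $\beta = \beta'$ is the ``leading'' index of $L_{T,\beta'}$.

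For the reverse inclusion $P_T \subseteq \calD_T$: I would pass to the reduced cones using the projection $i_T : \QQ^\BB \to \QQ^{\BB-T}$ of Section \ref{section: optimal}, which kills the $\pm b_\beta$ directions and, by the discussion there, satisfies $\calD_T = j_T(\calD_T^{T-\mathrm{red}}) + \langle \pm b_\beta : \beta \in T\rangle$; since the $L_{T,\beta}$ factor through $i_T$ (they are supported on coordinates one can rewrite via $e_{\sigma^i\tau} \sim (-p)^i e_\tau$ using the $b$'s), it is enough to prove the statement for $\calD_T^{T-\mathrm{red}} \subseteq \QQ^{\BB-T}$, which by the last display of Section \ref{section: optimal} is the $\QQ^{\geq 0}$-cone on the $g_\beta$'s, $\beta \in \BB - T$. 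Now $\{g_\beta : \beta \in \BB - T\}$ has cardinality $|\BB - T| = \dim \QQ^{\BB-T}$, so it suffices to show this set is a $\QQ$-basis and that the dual basis of functionals is exactly $\{L_{T,\beta}\}$ — equivalently, that the matrix $(L_{T,\beta}(g_{\beta'}))_{\beta,\beta'}$ is invertible with the ``correct'' sign pattern (e.g., upper-triangular up to reindexing with positive diagonal), which follows from the computation already done for the first inclusion once we observe $L_{T,\beta}(g_{\beta'}) = 0$ unless $\beta'$ lies in the ``segment'' defining $L_{T,\beta}$. Then any $\kappa$ with all $L_{T,\beta}(\kappa) \geq 0$ is a nonnegative combination of the $g_\beta$'s.

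The main obstacle I anticipate is the case-management when $\BB_\gerp - \tilde{T}_\gerp = \emptyset$: there $\calD_{T_\gerp}$ contains $\pm e_\beta$ for all $\beta \in \BB_\gerp$ (so, on that $\gerp$-block, $\calD_T$ fills the whole space and there are no constraints $L_{T,\beta}$ with $\beta \in \BB_\gerp$, consistent with $\BB_\gerp \subseteq T$ being impossible here unless $\tilde{T}_\gerp = \BB_\gerp$ with $|\BB_\gerp|$ even, i.e. $\gerp \in \mathrm{Iw}(T)$) — one must verify the bookkeeping degenerates gracefully, in particular that the identity $-(1+p^{|\BB_\gerp|})e_\beta = h_\beta^{\sigma\beta} - p^{|\BB_\gerp|-1}b_{\sigma\beta}$ from the proof of Proposition \ref{prop: optimal} gives $e_\beta \in \calD_{T_\gerp}$ with the right sign. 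A secondary subtlety is checking the claimed triangularity of $(L_{T,\beta}(g_{\beta'}))$ rather than merely nonsingularity; but since the supports of the $L_{T,\beta}$ are nested segments along $\sigma$-orbits anchored at the elements of $\BB_\gerp - \tilde{T}_\gerp$, a suitable linear order on $\BB - T$ should make the matrix triangular with entries $p^{\mu_\beta - 1}$ or $1 + p^{|\BB_\gerp|}$ on the diagonal, completing the argument.
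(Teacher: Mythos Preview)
Your overall strategy---showing that the $g_\beta$ form a basis with the $L_{T,\beta}$ essentially dual to them---is exactly what the paper does, but you overcomplicate it and introduce a gap. The paper works directly in $\QQ^\BB$ with the full basis $\{g_\beta : \beta \in \BB\}$ (including $g_\beta = b_\beta$ for $\beta \in T$) and observes by inspection that $L_{T,\beta}(g_\tau) = 0$ for all $\tau \neq \beta$ and $L_{T,\beta}(g_\beta) > 0$: the pairing matrix is \emph{diagonal}, not merely triangular. From this both inclusions are immediate, since $\calD_T = \langle g_\beta : \beta \notin T\rangle + \langle \pm g_\beta : \beta \in T\rangle$ is then exactly the locus where the (positive multiples of the) dual functionals are nonnegative. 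No passage to the reduced cone, and no case split on whether $\BB_\gerp - \tilde{T}_\gerp$ is empty, is needed.

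Your plan to verify only upper-triangularity is insufficient, and this is where the gap lies: a triangular matrix with positive diagonal does not make the $L_{T,\beta}$ into positive multiples of the dual basis, and the implication ``all $L_{T,\beta}(\kappa) \geq 0 \Rightarrow$ all coefficients $c_\beta \geq 0$'' can fail whenever an off-diagonal entry is positive. So the equivalence you assert between ``dual basis'' and ``triangular with positive diagonal'' is false, and the argument as written would not close. Separately, your edge-case discussion is incorrect: when $\tilde{T}_\gerp = \BB_\gerp$ but $T_\gerp \neq \BB_\gerp$, the cone $\calD_{T_\gerp}$ does \emph{not} fill $\QQ^{\BB_\gerp}$---the generators include $-e_\beta$ for $\beta \in \BB_\gerp - T_\gerp$ but not $+e_\beta$, and the constraints $L_{T,\beta}$ for those $\beta$ are genuinely present. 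Only the subcase $T_\gerp = \BB_\gerp$ gives the whole space, consistent with $\BB_\gerp - T_\gerp$ then being empty.
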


\begin{proof}   Note that  $\{g_\beta: \beta \in \BB\}$ is a basis for $\QQ^\BB$. By inspection, for $\beta \in \BB-T$, we have 
\[
L_{T,\beta}(g_\tau)=\begin{cases} >0 & {\rm if}\ \tau=\beta\\
=0 & {\rm if}\ \tau\neq \beta.
\end{cases}
\]
Since $\calD_T=\langle g_\beta: \beta \in \BB-T \rangle + \langle \pm g_\beta: \beta \in T\rangle$, it follows that $$\calD_T=\{\kappa \in \QQ^\BB: L_{T,\beta}(\kappa) \geq 0,\ \forall \beta \in \BB-T\}.$$
\end{proof}

\section{The Main Theorem}\label{section: main}

In this section, we determine the cones of weights of strata of $X$.  First, we prove a lemma. Let $Y$ be a scheme over $\FF$, and $\{\calL_i\}_{i\in I}$  a finite collection of line bundles on $Y$. For any weight $\kappa=\sum_{_{i \in I}} \kappa_ie_i \in \ZZ^I$, we define $\calL^{\kappa}=\otimes_{_{i \in I}} \calL_i^{ \kappa_i}$. If $Z$ is a closed subscheme of $Y$, we define  $C_Z(\calL^\bullet)$ to be the $\QQ^{\geq 0}$-cone in $\ZZ^I$ generated by the following subset of $\ZZ^I$:
$$\{\kappa \in \ZZ^I  : H^0(Z, \calL^{\kappa}) \neq 0 \}.$$

\begin{lemma}\label{lemma: intsum}
Let $Y$ be a noetherian scheme over $\FF$, and $\{\calL_i\}_{i\in I}$ a finite collection of line bundles on $Y$.  For $j \in J$ (a finite set), let  ${\bf h}_j \in H^0(Y, \calL^{h_j})$, where $h_j \in  \ZZ^I$.  Let $Z_j$ be the vanishing locus of ${\bf h}_j$, and assume that each $Z_j$ is purely of codimension one. Assume further that the intersection of  $Z_j$ with every irreducible component of $Y$ is nonempty. Then, $$C_Y(\calL^\bullet) \subset \big( \bigcap_{j \in J} C_{Z_j}(\calL^\bullet) \big)+ \langle h_j: j \in J\rangle,$$
where  $\langle h_j: j \in J\rangle$ denotes the $\QQ^{\geq 0}$-cone generated by $\{h_j: j \in J\}$.
\end{lemma}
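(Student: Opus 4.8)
The plan is to prove the inclusion $C_Y(\calL^\bullet) \subset \big( \bigcap_{j} C_{Z_j}(\calL^\bullet) \big) + \langle h_j : j \in J\rangle$ by induction on $|J|$, the case $|J|=0$ being trivial. So it suffices to treat $|J|=1$, say $J=\{j_0\}$ with section ${\bf h}={\bf h}_{j_0} \in H^0(Y,\calL^h)$ and vanishing locus $Z=Z_{j_0}$, and then iterate (intersecting the remaining $Z_j$'s with $Z$, which stays purely codimension one in $Z$ and meets every component of $Z$ by the hypothesis applied inside $Z$; some care is needed that the hypotheses of the lemma propagate to $Z$, but this is where the ``intersection with every irreducible component is nonempty'' assumption does its work). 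Thus I reduce to showing: if $\kappa \in \ZZ^I$ has $H^0(Y,\calL^\kappa)\neq 0$, then $\kappa \in C_Z(\calL^\bullet) + \QQ^{\geq0}h$.

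For the core step, take a nonzero $s \in H^0(Y,\calL^\kappa)$. The key is to divide $s$ by the largest power of ${\bf h}$ possible. Since ${\bf h}$ cuts out $Z$ purely in codimension one and meets every irreducible component of $Y$, on each irreducible component ${\bf h}$ is a nonzerodivisor (its zero locus is a proper closed subset), so multiplication by ${\bf h}$ is injective on sections; hence there is a well-defined maximal $n \geq 0$ such that $s = {\bf h}^n \cdot s'$ for some $s' \in H^0(Y,\calL^{\kappa - n h})$. (To make ``maximal'' legitimate I would note $Y$ is noetherian, work component by component on a cover, and bound $n$ using that ${\bf h}^n$ divides $s$ forces the order of vanishing of $s$ along each component of $Z$ to be $\geq n$; this is finite.) By maximality, $s'$ does not vanish identically on $Z$ — more precisely, $s'$ is not divisible by ${\bf h}$, and since $Z$ meets every component of $Y$ and ${\bf h}$ generates the ideal of $Z$ locally (up to the codimension-one/reducedness issue — here I would restrict the generality to what the paper actually needs, or invoke that ${\bf h}$ is, locally on $Y$, a uniformizer along $Z$), the restriction $s'|_Z \in H^0(Z,\calL^{\kappa-nh}|_Z)$ is nonzero. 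Therefore $\kappa - nh \in C_Z(\calL^\bullet)$, and $\kappa = (\kappa - nh) + n h \in C_Z(\calL^\bullet) + \QQ^{\geq 0} h$, as desired.

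The main obstacle I anticipate is the step asserting $s'|_Z \neq 0$, i.e.\ that after removing all powers of ${\bf h}$, the residual section genuinely survives restriction to $Z$. This needs ${\bf h}$ to be, Zariski-locally on $Y$, a generator of the ideal sheaf $\calI_Z$ — equivalently $Z$ should be a Cartier divisor and ${\bf h}$ a defining equation — which is what ``purely of codimension one'' plus the smoothness/niceness of the Goren--Oort setup (cf.\ Lemma~\ref{lemma: intersect}) is meant to provide in the applications. If $Z$ is merely purely codimension one but ${\bf h}$ vanishes to higher order, one must instead define $n$ via $n = \min_{C} \operatorname{ord}_C(s)$ over components $C$ of $Z$ divided by $\operatorname{ord}_C({\bf h})$, and argue more carefully; I would handle this by passing to the local rings $\calO_{Y,\xi}$ at the generic points $\xi$ of $Z$ (DVRs when $Y$ is nice enough, or at least one-dimensional local rings), comparing valuations of $s$ and ${\bf h}$, and taking $n$ to be the floor of the minimal ratio. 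The remaining bookkeeping — that the construction is compatible across components and that restriction to $Z$ commutes with the line-bundle twists — is routine. Once the $|J|=1$ case is in hand, the induction closes immediately, using that $\bigcap_{j\in J} C_{Z_j}(\calL^\bullet) \supseteq \bigcap_{j\in J}\bigl(C_{Z_j\cap Z_{j_0}}(\calL^\bullet) + \QQ^{\geq0}h_{j_0}\bigr)$-type containments and that all the $h_j$ land in the final cone.
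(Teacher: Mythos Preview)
Your $|J|=1$ case is correct and captures the essential idea: factor out the maximal power of ${\bf h}_{j_0}$, then the residual section restricts nonzero to $Z_{j_0}$.  However, your induction step is flawed.  You propose to pass to $Z=Z_{j_0}$ and apply the lemma there to the restricted sections $\{{\bf h}_j|_Z\}_{j\neq j_0}$, but (a) the hypotheses need not propagate --- $Z_j\cap Z$ need not be purely of codimension one in $Z$, nor meet every irreducible component of $Z$, and nothing in the statement guarantees this --- and more seriously (b) even granting this, the conclusion on $Z$ would involve the cones $C_{Z_j\cap Z_{j_0}}(\calL^\bullet)$, which are not in general contained in $C_{Z_j}(\calL^\bullet)$: a weight $\mu$ with $H^0(Z_j\cap Z_{j_0},\calL^\mu)\neq 0$ need not satisfy $H^0(Z_j,\calL^\mu)\neq 0$.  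So the ``$\bigcap_{j} C_{Z_j} \supseteq \bigcap_{j}(C_{Z_j\cap Z_{j_0}}+\QQ^{\geq 0}h_{j_0})$-type containment'' you invoke at the end is simply false in general, and the induction does not close.

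The paper's proof avoids this by staying on $Y$ throughout: one writes $f=(\prod_{j\in J}{\bf h}_j^{M_j}) f_0$ simultaneously for all $j$ (citing \cite[Lemma~7.2]{DK2}), with $f_0|_{Z_j}\neq 0$ for every $j\in J$; then $\kappa-\sum_j M_j h_j$ lies in each $C_{Z_j}(\calL^\bullet)$ at once, hence in their intersection.  Your $|J|=1$ argument can be repaired into exactly this: after dividing $s$ by ${\bf h}_{j_0}^{n_0}$ to get $s_1$ with $s_1|_{Z_{j_0}}\neq 0$, divide $s_1$ (still on $Y$) by the maximal power of ${\bf h}_{j_1}$ to get $s_2$; since $s_1={\bf h}_{j_1}^{n_1}s_2$ restricts nonzero to $Z_{j_0}$, so does $s_2$.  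Iterating on $Y$ (not on $Z$) yields the simultaneous factorization the paper uses.
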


\begin{proof} Assume $f\neq 0$ is a section of $\calL^\kappa$ on $Y$. By Lemma 7.2 in \cite{DK2}, we can write $f=(\Pi_{_{j \in J}}{\bf h}_j^{M_j}) f_0$ for some nonnegative integers  $\{M_j: j \in J\}$, such that restriction of $f_0$ to each $Z_j$ is nonzero. Therefore, $f_0$ gives a nonzero section of $\calL^{\otimes (\kappa-\sum_{ j\in J}M_jh_j)}$ on each $Z_j$. It follows that  $\kappa-\sum_{j\in J}M_jh_j\in \bigcap_{j \in J} C_{Z_j}(\calL^\bullet)$. 
\end{proof}
 
\begin{remark} Expressions involving cones of weights such as the one appearing in the above lemma arise, and are referred to as intersection-sum cones, in the work of Goldring-Koskivirta on weight cone conjectures. They also implicitly appear in the work of Diamond-Kassaei on minimal cones.
\end{remark}

 \begin{prop} \label{prop: O(1)} Let $T \subseteq \BB$. Let $T \subseteq T'\subseteq   \tilde{T}$ be such that  ${\rm Iw}(T')={\rm Iw}(T)$ (so, in particular, the morphism $\pi_{T,T'}: X_T \rightarrow X_{T'}$ is defined). Let $Z$ be a reduced closed subscheme of $X_{T'}$.  Then
$$\calC(\pi_{T,T'}^{-1}Z ,\omega^\bullet)  \subset \{\kappa=\sum_{_{\beta \in \BB}} k_\beta e_\beta : L_{T,\beta_0}(\kappa) \geq 0,  \forall \beta_0 \in T'-T\}.$$
\end{prop}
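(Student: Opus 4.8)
The plan is to exploit the product‑of‑$\PP^1$‑bundles structure of $\pi_{T,T'}$ recalled in \S\ref{section: morphisms}: it identifies $X_T$ with $\Pi_{\beta\in T'-T}\PP(\calH^{T'}_\beta)$ over $X_{T'}$, with $\omega_\beta\cong\calO_\beta(-1)$ for $\beta\in T'-T$ and $\pi_{T,T'}^*\omega_\beta\cong\omega_\beta$ for $\beta\in\BB-T'$. In particular $\pi_{T,T'}^{-1}Z$ is a product of $\PP^1$‑bundles over $Z$, and, $Z$ being reduced and $\pi_{T,T'}$ smooth, it is reduced and of finite type over $\FF$. Fix $\beta_0\in T'-T$ and a nonzero section $f\in H^0(\pi_{T,T'}^{-1}Z,\omega^\kappa)$ with $\kappa=\sum k_\beta e_\beta\in\ZZ^\BB$. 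Choosing a closed point $x$ at which $f$ is nonzero and letting $C_x\cong\PP^1$ be the fibre through $x$ of the $\beta_0$‑th projection (fix the image of $x$ in $Z$ and the coordinates of $x$ in all other $\PP^1$‑bundles), we get $f|_{C_x}\ne0$, hence $\deg(\omega^\kappa|_{C_x})\ge0$. The proposition will follow once we show $\deg(\omega^\kappa|_{C_x})=L_{T,\beta_0}(\kappa)$.

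The heart is to compute this degree. First I would unpack the combinatorics around $\beta_0$: writing $\beta_0\in\BB_\gerp$, the hypothesis $\beta_0\in T'-T$ forces $T_\gerp\ne\BB_\gerp$, and since $\beta_0\in\tilde T-T$ it is the unique element of $\tilde C-C$ for a chain $C$ of $T_\gerp$ with \emph{even} length‑parameter $m$; unravelling the definition of a chain gives $\sigma\beta_0,\dots,\sigma^{m+1}\beta_0\in T$, $\sigma^{m+2}\beta_0\notin T$, so $\mu_{\beta_0}=m+2$ is even and $\mu_{\sigma^i\beta_0}=\mu_{\beta_0}-i$ for $0\le i\le\mu_{\beta_0}-1$. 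Next I would rewrite $\omega^\kappa$ over $X_T$ using the nowhere‑vanishing sections ${\bf b}_\gamma(T)$, $\gamma\in T$, which give $\omega_\gamma\cong\omega_{\sigma^{-1}\gamma}^{-p}$ on $X_T$: iterating, every $\omega_\gamma$ becomes over $X_T$ a $(-p)$‑power of a bundle pulled back from $X_{T'}$, of $\calO_\beta(-1)$ for some $\beta\in T'-T$, or (when $\BB_\gerq\subseteq T$ for some $\gerq$) of a torsion bundle. Restricting to $C_x$, all of these become trivial except the powers of $\calO_{\beta_0}(-1)$ — because $C_x$ maps to a point of $X_{T'}$ and to a point of each other $\PP^1$‑bundle, and $\Pic\PP^1$ is torsion‑free — and the factors with nonzero restriction come exactly from $\gamma=\sigma^i\beta_0$, $0\le i\le\mu_{\beta_0}-1$, with $\omega_{\sigma^i\beta_0}|_{C_x}\cong\calO_{\PP^1}(-(-p)^i)$. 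Hence $\deg(\omega^\kappa|_{C_x})=-\sum_{i=0}^{\mu_{\beta_0}-1}(-p)^ik_{\sigma^i\beta_0}$. On the other hand, each $\sigma^i\beta_0$ ($0\le i\le\mu_{\beta_0}-1$) lies in $\tilde T_\gerp$ with $T_\gerp\ne\BB_\gerp$, so $\epsilon_T(\sigma^i\beta_0)=(-1)^{\mu_{\sigma^i\beta_0}-1}=(-1)^{\mu_{\beta_0}-1-i}$, whence $L_{T,\beta_0}(\kappa)=L^{\epsilon_T}[\beta_0,\sigma^{\mu_{\beta_0}-1}\beta_0](\kappa)=(-1)^{\mu_{\beta_0}-1}\sum_{i=0}^{\mu_{\beta_0}-1}(-p)^ik_{\sigma^i\beta_0}$; since $\mu_{\beta_0}$ is even this equals $-\sum_i(-p)^ik_{\sigma^i\beta_0}=\deg(\omega^\kappa|_{C_x})\ge0$.

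As $\beta_0\in T'-T$ was arbitrary, every $\kappa\in\ZZ^\BB$ with $H^0(\pi_{T,T'}^{-1}Z,\omega^\kappa)\ne0$ satisfies $L_{T,\beta_0}(\kappa)\ge0$ for all $\beta_0\in T'-T$; the right‑hand side of the asserted inclusion is a rational polyhedral $\QQ^{\geq 0}$‑cone, so it contains the $\QQ^{\geq 0}$‑saturation of the set of such $\kappa$, namely $\calC(\pi_{T,T'}^{-1}Z,\omega^\bullet)$. The main obstacle I anticipate is purely one of signs and bookkeeping — above all the evenness of $\mu_{\beta_0}$ on $\tilde T-T$, which is precisely what makes $-\sum(-p)^ik_{\sigma^i\beta_0}$ equal $+L_{T,\beta_0}(\kappa)$ rather than its negative — together with the routine case‑check that on $C_x$ the only $\omega_\gamma$ of nonzero degree are those with $\gamma$ in the chain $\beta_0,\sigma\beta_0,\dots,\sigma^{\mu_{\beta_0}-1}\beta_0$.
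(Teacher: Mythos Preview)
Your proof is correct and follows essentially the same approach as the paper's: both exploit the product-of-$\PP^1$-bundles structure of $\pi_{T,T'}$, use the isomorphisms $\omega_\gamma\cong\omega_{\sigma^{-\nu_\gamma}\gamma}^{(-p)^{\nu_\gamma}}$ on $X_T$ to reduce $\omega^\kappa$ to a product of twisting sheaves and pullbacks, and identify the exponent of $\calO_{\beta_0}(-1)$ as $-L_{T,\beta_0}(\kappa)$. The paper argues contrapositively on the full fibre $\prod_{\beta\in T'-T}\PP^1$ over a point of $Z$ (introducing an auxiliary power $N$ to absorb the torsion $\omega_\gamma$'s from primes $\gerq$ with $\BB_\gerq\subseteq T$), whereas you restrict directly to a single $\PP^1$ in the $\beta_0$-direction through a point where $f$ is nonzero and invoke torsion-freeness of $\Pic\PP^1$; this is a slightly cleaner packaging but the same argument.
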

\begin{proof} Note that if $T_\gerp=\BB_\gerp$, for some $\gerp|p$, there is nothing to prove at $\beta_0 \in \BB_\gerp$.  Furthermore, by Corollary \ref{cor: torsion}, it is true that for each $\beta \in \BB_\gerp$, the line bundle $\omega_\beta$ is torsion on $X_T$.
Fix $\gerp|p$ such that $T_\gerp \neq  \BB_\gerp$.   For any $\beta \in \BB_\gerp$, define $\nu_\beta$ to be the smallest $i\geq0$ such that $\sigma^{-i}\beta \not \in T_\gerp$. Then, from Definition \ref{dfn: b}, we have $\omega_\beta \cong \omega_{\sigma^{-\nu_\beta}\beta}^{(-p)^{\nu_\beta}}$. For any $\beta \in T_\gerp$, the embedding  $\sigma^{-v_\beta}\beta$ is in $\BB_\gerp-T_\gerp$, so it is either in $\BB_\gerp-\tilde{T}_\gerp$, or in $\tilde{T}_\gerp-T_\gerp$.  Putting all this together, and noting that the subset of $\beta \in \BB_\gerp$ for which $\sigma^{-\nu_\beta}\beta=\beta_0 \in \tilde{T}_\gerp-T_\gerp$ is exactly $\{\beta_0,\sigma\beta_0,...,\sigma^{\mu_{\beta_0}-1}\beta_0\}$, we find that there are integers $\ell_\beta(\kappa)$, for $\beta \in \BB-\tilde{T}$, such that  for some $N>0$, we have an isomorphism of line bundles on $X_T$:
 $$\omega^{N\kappa} \cong \otimes_{_{\beta \in \BB-\tilde{T}}} \omega_\beta^{N\ell_\beta(\kappa)} \otimes \otimes_{_{\beta \in \tilde{T}-T}} \omega_\beta^{NL_{T,\beta}(\kappa)}.$$
By assumptions, we have an isomorphism $\iota_{T,T'}: X_T \rightarrow \Pi_{_{\beta \in I}} \PP(\calH^{T'}_{\beta})$ (see \S \ref{section: morphisms}). Under $\iota_{T,T'}$, this line bundle corresponds to $\calL_N=\otimes_{_{\beta \in \BB-\tilde{T}}} s_{T'}^*\omega_\beta^{N\ell_\beta(\kappa)} \otimes  \otimes_{_{\beta \in \tilde{T}-T'}} s_{T'}^*\omega_\beta^{ NL_{T,\beta}(\kappa)} \otimes \otimes_{_{\beta \in T'-T}} \calO_{\beta}(NL_{T,\beta}(\kappa))$. 

Now assume that $\kappa$ is a weight such that $L_{T,\beta_0}(\kappa)<0,$ for some $\beta_0 \in T'-T$, and let $f \in H^0(\pi_{T,T'}^{-1}Z,\omega^{\kappa})$.
Then, under the isomorphism $i_{T,T'}|_{\pi_{T,T'}^{-1}Z}: \pi_{T,T'}^{-1}Z \rightarrow \Pi_{_{\beta \in T'-T}} \PP(\calH^{T'}_{\beta}|_Z)$, the section  $f^N$ of 
$\omega^{N\kappa}$ on $\pi_{T,T'}^{-1}Z$
gives rise to a section of $\calL_N$ on $\Pi_{_{\beta \in T'-T}} \PP(\calH^{T'}_{\beta}|_Z)$. 
The restriction of $\calL_N$ to every geometric fibre of 
$s_{T'}:\Pi_{_{\beta \in T'-T}} \PP(\calH^{T'}_{\beta}|_Z) \rightarrow Z$ is isomorphic to $\otimes_{_{\beta \in T'-T}} \calO_{\beta}(NL_{T,\beta}(\kappa))$.  
Since $L_{T,\beta_{0}}(\kappa)<0$, the restriction of $f^N$ to any such fibre is zero. Since $Z$ is reduced, so is $\pi_{T,T'}^{-1}Z$, and it follows that $f^N=0$, and hence $f=0$. 
\end{proof}

\begin{lemma}\label{lemma: exceptional} Let $T \subseteq \BB$ be such for some $\gerp|p$, we have $|\BB_\gerp|$ is even, and $T_\gerp=\BB_\gerp-\{\beta\}$, for some $\beta \in \BB_\gerp$. Then $\calC_T \subseteq \{\kappa \in \QQ^\BB: L_{T,\beta}(\kappa)\geq0\}$.
\end{lemma}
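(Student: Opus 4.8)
The plan is to argue directly from the projective‑bundle structure on $X_T$ supplied by \S\ref{section: morphisms}, since Proposition \ref{prop: O(1)} does not apply in this ``exceptional'' case: any $T'$ with $T\subseteq T'\subseteq\tilde T$ and $\beta\in T'$ has $T'_\gerp=\BB_\gerp$, so $\gerp\in{\rm Iw}(T')\setminus{\rm Iw}(T)$ and the morphism $\pi_{T,T'}$ is not defined. First I would record the combinatorics: because $T_\gerp=\BB_\gerp\setminus\{\beta\}$ is a single chain of odd length $|\BB_\gerp|-1$ and $|\BB_\gerp|$ is even, one gets $\tilde T_\gerp=\BB_\gerp$, so $\tilde T_\gerp-T_\gerp=\{\beta\}$ contributes exactly one $\PP^1$‑factor to $X_T$. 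Let $q\colon X_T\to Y$ be the projection of $X_T\cong\Pi_{\gamma\in\tilde T-T}\PP(\calV_\gamma)$ onto the product $Y$ of the remaining factors over the quaternionic Shimura variety $X_{G,K_T}$; then $q$ is a $\PP^1$‑bundle with $\omega_\beta\cong\calO_\beta(-1)$ of fiberwise degree $-1$, while every $\omega_\gamma$ with $\gamma\in\BB-\BB_\gerp$ is trivial on the fibers of $q$ (it is pulled back from $X_{G,K_T}$, or is a tautological bundle of another factor, or---if $T_{\gerp'}=\BB_{\gerp'}$ for some $\gerp'\mid p$---is torsion by Corollary \ref{cor: torsion}).

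Next I would compute $\omega^\kappa$ on a fiber $P\cong\PP^1$ of $q$. Since $\sigma^j\beta\in T_\gerp$ for $1\le j\le|\BB_\gerp|-1$, iterating the isomorphisms of Definition \ref{dfn: b} yields $\omega_{\sigma^j\beta}\cong\omega_\beta^{(-p)^j}$ on $X_T$ for all $0\le j\le|\BB_\gerp|-1$, hence $\omega^\kappa|_P\cong\calO_{\PP^1}\bigl(-\sum_{j=0}^{|\BB_\gerp|-1}(-p)^j k_{\sigma^j\beta}\bigr)$, the $\omega_\gamma$ with $\gamma\notin\BB_\gerp$ contributing degree $0$. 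The key bookkeeping point---and the step I expect to be the main obstacle---is to check that this exponent equals $L_{T,\beta}(\kappa)$ exactly: using that $|\BB_\gerp|$ is even one has $\mu_\beta=|\BB_\gerp|$ (so $L_{T,\beta}=L^{\epsilon_T}[\beta,\sigma^{\mu_\beta-1}\beta]$), $\epsilon_T(\beta)=(-1)^{\mu_\beta-1}=-1$, and more generally $\epsilon_T(\sigma^i\beta)p^i=-(-p)^i$ for every $i$, whence $L_{T,\beta}(\kappa)=-\sum_{j=0}^{|\BB_\gerp|-1}(-p)^j k_{\sigma^j\beta}$ and $\omega^\kappa|_P\cong\calO_{\PP^1}(L_{T,\beta}(\kappa))$.

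Finally, if $L_{T,\beta}(\kappa)<0$ then $H^0(P,\omega^\kappa|_P)=0$ for every fiber $P$ of $q$; as the fibers cover $X_T$ and $X_T$ is reduced, any global section of $\omega^\kappa$ must vanish identically, so $H^0(X_T,\omega^\kappa)=0$. Hence $\{\kappa:H^0(X_T,\omega^\kappa)\neq0\}$ is contained in the half‑space $\{\kappa\in\QQ^\BB:L_{T,\beta}(\kappa)\ge0\}$, which is convex and stable under $\QQ^{\ge0}$‑scaling, so the cone $\calC_T$ it generates is contained in it as well. Apart from the sign/exponent identification just flagged, every step is an immediate consequence of the bundle description recalled in \S\ref{section: morphisms} together with Definition \ref{dfn: b}.
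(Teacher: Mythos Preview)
Your argument is correct and takes a genuinely different route from the paper's.  The paper proceeds by induction on $|\BB-T|$: in the base case $T=\BB\setminus\{\beta\}$, the stratum $X_T$ is a union of complete curves, and one shows $\omega_\beta$ is anti-ample on each component (using that ${\bf h}_\beta(T)$ is a non-trivially vanishing section of a negative power of $\omega_\beta$); for the inductive step one picks $\beta'\in\BB_\gerq-T_\gerq$ with $\gerq\neq\gerp$, applies Lemma~\ref{lemma: intsum} with ${\bf h}_{\beta'}(T)$ to obtain $\calC_T\subset\calC_{T\cup\{\beta'\}}+\langle h_{\beta'}\rangle$, and concludes via the induction hypothesis and the observation $L_{T,\beta}(h_{\beta'})=0$.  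Your approach is instead to note that the obstruction to applying Proposition~\ref{prop: O(1)} (the lack of a morphism to a stratum $X_{T'}$ with $\beta\in T'$) is inessential: one can still project onto the product $Y$ of the remaining $\PP^1$-factors over $X_{G,K_T}$ and run the same fiberwise-degree computation there.  This is more direct and makes the mechanism uniform with that of Proposition~\ref{prop: O(1)}, whereas the paper's route stays within the intersection-sum framework and supplies an independent anti-ampleness argument in the base case.  One small gap in your parenthetical: for $\gamma\in T_{\gerq}$ with $\gerq\neq\gerp$ and $T_{\gerq}\subsetneq\BB_{\gerq}$, none of your three clauses applies literally; you should also invoke Definition~\ref{dfn: b} to write such an $\omega_\gamma$ as a power of $\omega_{\gamma'}$ with $\gamma'\in\BB_{\gerq}-T_{\gerq}$, after which one of the first two clauses does apply.
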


\begin{proof}

Note that the assumptions imply $\beta \in \tilde{T}_\gerp-T_\gerp$. This case does not follow from Proposition \ref{prop: O(1)}, since ${\rm Iw}(T)\neq {\rm Iw}(T\cup\{\beta\})$. We prove this by induction on $|\BB-T|$. When $|\BB-T|=1$, we have $T= \BB-\{\beta\}$, and $X_T$ is a union of complete curves. First note that, since $\omega_{\sigma^{-1}\beta} \cong \omega_\beta^{(-p)^{|\BB_\gerp|-1}}$, the Hasse invariant ${\bf h}_\beta(T)$  gives a  section of $\omega_\beta^{-1-p^{|\BB_\gerp|}}$ whose vanishing locus meets every connected component of $X_T$, implying that $\omega_\beta$ is an anti-ample bundle on each such component. Now let  $0 \neq {\bf f} \in H^0(X_T,\omega^\kappa)$. Since $\BB_\gerq \subset T$ for all $\gerq\neq \gerp$, by Corollary \ref{cor: torsion}, we know that $\omega_\tau$ is torsion for all $\beta \in \BB-\BB_\gerp$. Hence, there is $N>0$ such that ${\bf f}^N$ gives a section of $\bigotimes_{\tau \in \BB_\gerp} \omega_\tau^{Nk_\tau}$ on $X_T$. Since all $\beta \neq \tau \in \BB_\gerp$ belong to $T_\gerp$, it follows that $\bigotimes_{\tau \in \BB_\gerp} \omega_\tau^{Nk_\tau} \cong \omega_\beta^{-NL_{T,\beta}(\kappa)}$. Since $\omega_\beta$ is anti-ample on each component, it follows that $L_{T,\beta}(\kappa)\geq 0$. 

Now assume $|\BB-T|>1$, and choose $\beta' \in \BB_\gerq-T_\gerq$, for some $\gerq \neq \gerp$. Lemma \ref{lemma: intsum} implies
$$\calC_T \subset \calC(X_T \cap Z({\bf h}_{\beta'}(T)),\omega^\bullet)+ \langle h_{\beta'}\rangle = \calC_{T \cup \{\beta'\}} + \langle h_{\beta'}\rangle \subset \{\kappa \in \QQ^\BB: L_{T,\beta}(\kappa)\geq 0\},$$ by the induction hypothesis, and since $L_{T,\beta}(h_{\beta'})=0$.
\end{proof}

\begin{cor} \label{cor: general case} Let $T \subseteq \BB$.  Then
$$\calC(X_T ,\omega^\bullet)  \subset \{\kappa=\sum_{_{\beta \in \BB}} k_\beta e_\beta : L_{T,\beta_0}(\kappa) \geq 0,  \forall \beta_0 \in \tilde{T}-T\}.$$
\end{cor}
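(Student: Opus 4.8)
The plan is to deduce Corollary~\ref{cor: general case} from the two results immediately preceding it, namely Proposition~\ref{prop: O(1)} (which handles the indices $\beta_0 \in \tilde T - T$ that become separated \emph{within} a single $T'$ with $\mathrm{Iw}(T')=\mathrm{Iw}(T)$, i.e.\ where the chain structure allows $T'$ to go all the way up to $\tilde T$) and Lemma~\ref{lemma: exceptional} (which handles the remaining ``exceptional'' indices, those $\beta_0$ in a prime $\gerp$ with $T_\gerp = \BB_\gerp - \{\beta_0\}$ and $|\BB_\gerp|$ even, where $\mathrm{Iw}(T) \neq \mathrm{Iw}(T \cup \{\beta_0\})$). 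So the first step is to fix $\beta_0 \in \tilde T - T$, say $\beta_0 \in \BB_\gerp$, and determine which of the two cases applies. The key combinatorial observation is that $\beta_0 \in \tilde T_\gerp - T_\gerp$ forces $\beta_0$ to lie at the ``even end'' of a chain $C$ in $T_\gerp$; writing $C = \{\sigma^{-i}\gamma : 0 \le i \le m\}$ with $m$ even and $\beta_0 = \sigma^{-m-1}\gamma$, the interval $\{\beta_0, \sigma\beta_0, \dots, \sigma^{\mu_{\beta_0}-1}\beta_0\}$ is exactly $\{\beta_0\}\cup C$. We are in the non-exceptional situation precisely when we can choose $T' := T \cup (\tilde T_\gerp - T_\gerp) \cup \cdots$ — more carefully, $T' = \tilde T$ restricted appropriately — with $\mathrm{Iw}(T')=\mathrm{Iw}(T)$, which holds unless $T_\gerp = \BB_\gerp - \{\beta_0\}$ with $|\BB_\gerp|$ even (in which case passing to $\tilde T_\gerp = \BB_\gerp$ changes the Iwahori set at $\gerp$).

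Concretely, I would argue as follows. If $\beta_0 \in \tilde T_\gerp - T_\gerp$ and $\tilde T_\gerp \neq \BB_\gerp$, or $\tilde T_\gerp = \BB_\gerp$ with $|\BB_\gerp|$ odd, then one checks $\mathrm{Iw}(\tilde T) = \mathrm{Iw}(T)$ directly from the definition of $\mathrm{Iw}$, and one may take $T' = \tilde T$ (or the intermediate set $T \cup \{\text{the relevant chain completions}\}$ with the same Iwahori data); then $\beta_0 \in T' - T$, and applying Proposition~\ref{prop: O(1)} with $Z = X_{T'}$ (so that $\pi_{T,T'}^{-1}Z = X_T$, since $\pi_{T,T'}$ is surjective — it is a product of $\PP^1$-bundles) gives $\calC(X_T, \omega^\bullet) \subset \{\kappa : L_{T,\beta_0}(\kappa) \ge 0\}$. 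If instead $T_\gerp = \BB_\gerp - \{\beta_0\}$ with $|\BB_\gerp|$ even, then $\tilde T_\gerp - T_\gerp = \{\beta_0\}$, this is precisely the hypothesis of Lemma~\ref{lemma: exceptional}, and that lemma gives $\calC_T \subseteq \{\kappa : L_{T,\beta_0}(\kappa) \ge 0\}$. Since every $\beta_0 \in \tilde T - T$ falls into exactly one of these two cases (as $\beta_0$ ranges over $\tilde T_\gerp - T_\gerp$ for each $\gerp$), intersecting the finitely many half-spaces obtained yields the claimed containment.

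The main obstacle, I expect, is the bookkeeping in verifying that when $\beta_0 \in \tilde T_\gerp - T_\gerp$ is not exceptional one can genuinely find $T'$ with $T \subseteq T' \subseteq \tilde T$, $\beta_0 \in T'$, and $\mathrm{Iw}(T') = \mathrm{Iw}(T)$: one must confirm that completing the chain containing $\beta_0$ (and only that chain) does not alter whether $T_\gerp = \BB_\gerp$ at any prime, which is where the parity of $|\BB_\gerp|$ and the precise definition of $\tilde T_\gerp$ versus $\mathrm{Iw}(T)$ interact. The cleanest formulation is probably to take $T' = \tilde T$ whenever $\mathrm{Iw}(\tilde T) = \mathrm{Iw}(T)$ — which is automatic unless some $\BB_\gerp - \tilde T_\gerp = \emptyset$ while $\BB_\gerp - T_\gerp \neq \emptyset$, forcing $|\BB_\gerp|$ odd (so $\gerp \notin \mathrm{Iw}$ on both sides) except in the even case already handled by the lemma. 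A secondary point needing care is that Proposition~\ref{prop: O(1)} is stated for a reduced closed subscheme $Z$ of $X_{T'}$; one takes $Z = X_{T'}$ itself (reduced, since it is smooth), and uses that $\pi_{T,T'}$ being a product of projective-space bundles is surjective so $\pi_{T,T'}^{-1}(X_{T'}) = X_T$. Once these verifications are in place the corollary is immediate.
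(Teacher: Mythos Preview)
Your overall strategy matches the paper's: split into cases where Proposition~\ref{prop: O(1)} applies versus the exceptional case of Lemma~\ref{lemma: exceptional}, and take $Z = X_{T'}$ in the former. However, your dichotomy has a genuine gap. You treat ``non-exceptional'' as meaning $\tilde T_\gerp \neq \BB_\gerp$ or $|\BB_\gerp|$ odd (where you propose $T' = \tilde T$), and ``exceptional'' as meaning $T_\gerp = \BB_\gerp \setminus \{\beta_0\}$ with $|\BB_\gerp|$ even. This misses the case $\tilde T_\gerp = \BB_\gerp$, $|\BB_\gerp|$ even, and $|\BB_\gerp - T_\gerp| = t \ge 2$: here your $T' = \tilde T$ fails since $\tilde T_\gerp = \BB_\gerp$ forces $\gerp \in \mathrm{Iw}(\tilde T) \setminus \mathrm{Iw}(T)$, yet Lemma~\ref{lemma: exceptional} does not apply either since it requires $t = 1$. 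Relatedly, your parity claim is backwards: if $T_\gerp \neq \BB_\gerp$ then $|\tilde T_\gerp|$ is always even (each $\tilde C_j$ has even size), so $\tilde T_\gerp = \BB_\gerp$ with $T_\gerp \neq \BB_\gerp$ forces $|\BB_\gerp|$ \emph{even}, not odd.

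The paper closes this gap by treating the case $\tilde T_\gerp = \BB_\gerp$, $|\BB_\gerp|$ even, $t \ge 2$ via Proposition~\ref{prop: O(1)} as well, but with the smaller choice $T' = T \cup \{\beta_i\}$ applied separately for each $\beta_i \in \tilde T_\gerp - T_\gerp$. Since $t \ge 2$, we have $T'_\gerp \subsetneq \BB_\gerp$, so $\gerp \notin \mathrm{Iw}(T')$ and $\mathrm{Iw}(T') = \mathrm{Iw}(T)$; Proposition~\ref{prop: O(1)} then gives $L_{T,\beta_i}(\kappa) \ge 0$. In fact the cleanest uniform fix to your argument is simply to replace the global choice $T' = \tilde T$ by the local one $T' = T \cup \{\beta_0\}$ (with $T'_\gerq = T_\gerq$ for $\gerq \neq \gerp$) throughout: this preserves $\mathrm{Iw}$ in every case except precisely $T_\gerp = \BB_\gerp \setminus \{\beta_0\}$ with $|\BB_\gerp|$ even, which is exactly what Lemma~\ref{lemma: exceptional} handles.
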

\begin{proof} Let  $\kappa \in \calC(X_T ,\omega^\bullet)$, and $\gerp|p$. If either $|\BB_\gerp|$ is odd, or $\tilde{T}_\gerp \neq  \BB_\gerp$ it follows from Proposition \ref{prop: O(1)} that $L_{\beta}(\kappa) \geq 0$, for all $\beta \in \tilde{T}_\gerp-T_\gerp$.  Otherwise, assume $\tilde{T}_\gerp=\BB_\gerp$, and $\tilde{T}_\gerp-T_\gerp=\{\beta_1,...,\beta_t\}$. If $t\geq 2$, applying Proposition \ref{prop: O(1)} with $T'=T \cup\{\beta_i\}$, for $1 \leq i \leq t$, implies that $L_{\beta_i}(\kappa) \geq 0$. When $t=1$, Lemma \ref{lemma: exceptional} implies that $L_{\beta_1}(\kappa) \geq 0$.
\end{proof}

We now prove the main result.

\begin{thm} \label{thm:main}For any $T \subset \BB$, we have $$\calC_T=\calD_T=\langle h_{\sigma^{n_\beta}{\beta}}^\beta: \beta \in \BB-\tilde{T} \rangle    + \langle-b_{\sigma^{n_\beta}\beta}^\beta: \beta \in \tilde{T}_\gerp-T_\gerp \rangle + \langle \pm b_\beta: \beta \in T \rangle.$$

\end{thm}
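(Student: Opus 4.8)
The plan is to prove the two inclusions $\calD_T \subseteq \calC_T$ and $\calC_T \subseteq \calD_T$ separately, the equality $\calD_T = \calD'_T$ with the stated explicit generators being already established in Proposition~\ref{prop: optimal} and Lemma~\ref{lemma: no boxes} (the latter handling the relation between $\calG'$ and the $h_\beta$'s, $b_\beta$'s). The first inclusion is immediate: Corollary~\ref{cor: lower bound} gives $D_T \subseteq C_T$, and passing to $\QQ^{\geq 0}$-saturations yields $\calD_T \subseteq \calC_T$. So the entire content is the reverse inclusion $\calC_T \subseteq \calD_T$.

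For the reverse inclusion, I would first invoke Proposition~\ref{prop:explicit}, which characterizes $\calD_T$ as the set of $\kappa \in \QQ^\BB$ with $L_{T,\beta}(\kappa) \geq 0$ for all $\beta \in \BB - T$. Therefore it suffices to show that every $\kappa \in \calC_T$ satisfies $L_{T,\beta}(\kappa) \geq 0$ for each $\beta \in \BB - T$. Now split $\BB - T$ into two types according to Definition~\ref{definition: mu} / the definition of $L_{T,\beta}$: the case $\beta \in \BB - \tilde{T}$ and the case $\beta \in \tilde{T} - T$. For $\beta \in \tilde{T} - T$, this is exactly the content of Corollary~\ref{cor: general case}, which was obtained by combining Proposition~\ref{prop: O(1)} (pullback of sections along the $(\PP^1)$-bundle maps $\pi_{T,T'}$, using that a section negative along a fibre must vanish) with the exceptional Lemma~\ref{lemma: exceptional} (treating the case $T_\gerp = \BB_\gerp - \{\beta\}$ with $|\BB_\gerp|$ even, where $\mathrm{Iw}$ changes). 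For $\beta \in \BB - \tilde{T}$, the functional $L_{T,\beta} = L^{\epsilon_T}[\sigma^{\mu_\beta}\beta]$ must likewise be shown nonnegative on $\calC_T$; here I would argue that the relevant partial Hasse-type section $\mathbf{h}_{\sigma^{n_\beta}\beta}^{\beta}(T)$ (or rather its vanishing locus, via Lemma~\ref{lemma: intersect}, which meets every connected component in a smooth divisor) together with Lemma~\ref{lemma: intsum} reduces the inequality to a smaller stratum or to the quaternionic base $X_{G,K_T}$, where the corresponding $\omega_\beta$ restricts to a genuinely ample (or anti-ample) line bundle on the relevant fibres, forcing the sign. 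Combining both cases gives $\calC_T \subseteq \calD_T$, completing the proof once the explicit description of $\calD_T$ from the earlier sections is substituted in.

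The main obstacle I anticipate is \emph{not} the $\tilde{T} - T$ directions (those are already packaged in Corollary~\ref{cor: general case}) but rather the $\BB - \tilde{T}$ directions: one needs a clean geometric input on the quaternionic Shimura variety $X_{G,K_T}$ guaranteeing that the line bundles $\eta_\beta$ (equivalently $\omega_\beta$ for $\beta \notin \tilde{T}$) behave like partial Hasse invariants there, i.e., that the only way a section of $\omega^\kappa$ can exist is if $\kappa$ pairs nonnegatively with the corresponding functional. Concretely I expect to reduce, via repeated application of Lemma~\ref{lemma: intsum} stripping off one Hasse invariant $\mathbf{h}_{\beta'}(T)$ at a time (as in the induction of Lemma~\ref{lemma: exceptional}), to the deepest stratum $X_{\BB - \{\beta\}}$ or to a $0$- or $1$-dimensional situation where anti-ampleness of $\omega_\beta$ can be checked directly from the fact that ${\bf h}_\beta^\beta(T)$ is a nonzero section of a negative power of $\omega_\beta$ whose vanishing locus meets every component. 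Making the bookkeeping of which functionals are preserved under each such stripping step (the identities $L_{T,\beta}(h_{\beta'}) = 0$ and their analogues) is where the care is needed, but no genuinely new idea beyond those already deployed in Lemmas~\ref{lemma: intsum}, \ref{lemma: exceptional} and Corollary~\ref{cor: general case} should be required.
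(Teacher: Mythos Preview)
Your outline correctly identifies the easy inclusion $\calD_T \subseteq \calC_T$ and, for the reverse inclusion, correctly packages the inequalities $L_{T,\beta}(\kappa) \ge 0$ for $\beta \in \tilde{T} - T$ via Corollary~\ref{cor: general case}.  The genuine gap is in your treatment of the functionals $L_{T,\beta}$ for $\beta \in \BB - \tilde{T}$, and it is more serious than a matter of bookkeeping.

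First, the paper does \emph{not} argue these inequalities by stripping to a one-dimensional stratum.  Its proof is by induction on $|\tilde{T} - T|$, and the base case $\tilde{T} = T$---where \emph{every} functional is of the $\BB - \tilde{T}$ type---is not proved internally: it is precisely the statement that the cone of weights on the quaternionic Shimura variety $X_{G,K_T}$ is its Hasse cone, imported from \cite{GK} or \cite[Cor.~5.4]{DK1}.  You gesture at ``reducing to the quaternionic base $X_{G,K_T}$'', but then assert that ``no genuinely new idea beyond those already deployed'' is needed; in fact, knowing the cone on $X_{G,K_T}$ is exactly the new ingredient.  Your alternative proposal, reducing to $X_{\BB - \{\beta\}}$ by repeatedly stripping $\mathbf{h}_{\beta'}(T)$, runs into a concrete obstruction: for $\beta \in \BB - \tilde{T}$ the functional $L_{T,\beta} = L^{\epsilon_T}[\sigma^{\mu_\beta}\beta]$ is a full-cycle sum whose signs are governed by $\epsilon_T$, and enlarging $T$ by $\beta'$ changes $\tilde{T}$ and hence $\epsilon_T$, so $L_{T,\beta}$ need not equal $L_{T\cup\{\beta'\},\beta}$, and $L_{T,\beta}(h_{\beta'})$ can be negative.

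Second, the paper's inductive step is more delicate than stripping a single Hasse invariant.  In Case~1 it chooses a \emph{specific} $\beta = \sigma^{\mu_{\beta_1}}\beta_1 \in \BB - \tilde{T}$ (so that $|\tilde{T''} - T''| = |\tilde{T} - T| - 1$ for $T'' = T \cup \{\beta\}$) and applies Lemma~\ref{lemma: intsum} with \emph{two} sections at once, $\mathbf{h}_\beta(T)$ and $\pi_{T,T'}^*\mathbf{h}_\beta^\beta(T')$.  The first yields the induction hypothesis on $X_{T''}$; the second, via Proposition~\ref{prop: O(1)}, supplies the missing constraint $L_{T,\beta_1}(\kappa) \ge 0$.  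One then checks the linear-algebra identity $\calD_{T''} \cap \{L_{T,\beta_1} \ge 0\} \subset \calD_T$.  This simultaneous use of two sections and the particular choice of $\beta$ is what makes the induction close; it is not recoverable from the Lemma~\ref{lemma: exceptional} template you invoke.
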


\begin{proof} By Corollary \ref{cor: lower bound} and Proposition \ref{prop: optimal}, it is enough to show $\calC_T \subseteq \calD_T$. We
 prove this by induction on $|\tilde{T}-T|$. The initial case where $\tilde{T}=T$ is already known and can be proved in different ways. In this case, by Proposition \ref{prop: optimal}, and in the notation of \S \ref{section: optimal}, we have 
$$\calD_T=\langle h_{\sigma^{n_\beta}{\beta}}^\beta: \beta \in \BB-T \rangle + \langle \pm b_\beta: \beta \in T \rangle.$$
One can now either use Lemma \ref{lemma: no boxes}, and  appeal to \cite{GK} (where the admissibility condition on a stratum $X_T$ corresponds to the condition $\tilde{T}=T$), or note that the proof of  Corollary 5.4 in \cite{DK1} applies verbatim to show that, in the notation of \S \ref{section: morphisms}, the cone of weights  $\calC(X_{G,K_T},\eta^\bullet)$ is generated by the weights of the  Hasse invariants of $X_{G,K_T}$. It then follows immediately, using $\iota_T: X_T \cong X_{G,K_T}$, that $$\calC_T^{T\!-\rm red}=\iota_T^*\calC(X_{G,K_T},\eta^\bullet).$$ Every Hasse invariant of $X_{G,K_T}$ is a section of a line bundle of the form $(\eta_{\beta})^{p^{n_\beta}}(\eta_{\sigma^{n_\beta}\beta})^{-1}$, where $\beta \in \BB-T$. It pulls back under $\iota_T$ to a nonzero from of weight $h_{\sigma^{n_\beta}\beta}^\beta$.  We deduce that 
$$\calC_T=j_T(\calC_T^{T\!-\rm red})+ \langle \pm b_\beta: \beta \in T\rangle = \langle h_{\sigma^{n_\beta}{\beta}}^\beta: \beta \in \BB-T \rangle + \langle \pm b_\beta: \beta \in T \rangle=\calD_T.$$

We now assume $\tilde{T} \neq T$. Choose $\gerp|p$ such that $\tilde{T}_\gerp-T_\gerp=\{\beta_1,...,\beta_r\}$ for $r>0$. We consider two cases: 

\vspace{2mm}

\noindent {\bf{Case 1:}} $\BB_\gerp-\tilde{T}_\gerp \neq \emptyset$. Without loss of generality, we can assume there is $\beta \in \BB_\gerp-\tilde{T}_\gerp$ such that $\sigma^{\mu_{\beta_1}}\beta_1=\beta$. Let $T''=T\cup\{\beta\}$, $T'=T\cup\{\beta_1\}$. Apply Lemma \ref{lemma: intsum} with $\pi_{T,T'}^*{\bf h}_\beta^\beta(T')={\bf h}_\beta^\beta(T)$ and ${\bf h}_{\beta}(T)$ (using Lemma \ref{lemma: intersect}):
$$\calC_T \subseteq \big(\calC_{T''} \cap \calC(Z(\pi_{T,T'}^*{\bf h}_\beta^\beta(T')),\omega^\bullet) \big)+\langle h_\beta, h_{\beta}^{\beta} \rangle.$$  Applying Proposition \ref{prop: O(1)} with $Z=Z({\bf h}_\beta^\beta(T'))$, we have 
$$\calC_T\subseteq \big(\calC_{T''} \cap \{\kappa: L_{T,\beta_1}(\kappa)\geq 0\}\big)+\langle h_{\beta}, h_{\beta}^{\beta} \rangle.$$  
 By induction hypothesis, $\calC_{T''}=\calD_{T''}$. Since $h_\beta,h_\beta^\beta \in \calD_T$,  to show $\calC_T \subseteq \calD_T$, it is enough to show $$\calD_{T''} \cap \{\kappa: L_{T,\beta_1}(\kappa)\geq 0\} \subset \calD_{T}.$$
Note that we have $\tilde{T''}-T''=(\tilde{T}-T)-\{\beta_1\}$, and $\BB-\tilde{T''}=((\BB-\tilde{T})-\{\beta\})\cup\{\beta_1\}$. We also have $\epsilon_{T''}(\tau)=-\epsilon_T(\tau)$, fot $\tau \in \{\beta_1,\sigma\beta_1,...,\sigma^{\mu_{\beta_1}-1}\beta_1\}$, and  $\epsilon_{T''}(\tau)=\epsilon_T(\tau)$, otherwise. Let $\iota: \BB-\tilde{T} \rightarrow \BB-\tilde{T''}$ be the bijection which is identity except at $\beta$, in which case $\iota(\beta)=\beta_1$. 

Take $\kappa_0 \in \calD_{T''} \cap \{\kappa: L_{T,\beta_1}(\kappa)\geq 0\}$. It follows that  $L_{T,\tau}(\kappa_0)=L_{T'',\tau}(\kappa_0)\geq 0$, for all $\tau \in \tilde{T''}-T''=(\tilde{T}-T)-\{\beta_1\}$, $L_{T,\beta_1}(\kappa_0)\geq 0$, and that, for $\tau \in \BB-\tilde{T}$,  either $L_{T,\tau}(\kappa_0)=L_{T'',\iota(\tau)}(\kappa_0)$ (when $\tau \not \in \BB_\gerp$), or there is $N_\tau\geq 0$ such that $L_{T,\tau}(\kappa_0)=L_{T'',\iota(\tau)}(\kappa_0)+2p^{N_\tau} L_{T,\beta_1}(\kappa_0)\geq 0$.  Proposition \ref{prop:explicit} implies that $\kappa_0 \in \calD_T$.

 \vspace{3 mm}
 
 \noindent {\bf{Case 2:}} $\BB_\gerp-\tilde{T}_\gerp =\emptyset$. Recall that $\tilde{T}_\gerp-T_\gerp=\{\beta_1,\cdots,\beta_r\}$. We have $$\calC_T \subset \{\kappa \in \QQ^\BB: L_{T,\beta_i}(\kappa) \geq 0,  1 \leq i \leq r  \},$$  by Corollary \ref{cor: general case}. Let $T'=T\cup\{\beta_1\}$.
 Therefore, Lemma \ref{lemma: intsum} applied with ${\bf h}_{\beta_1}(T)$   implies that
 $$\calC_T \subseteq \{\kappa \in \calC_{T'} + \langle h_{\beta_1} \rangle:  L_{T,\beta_i}(\kappa)\geq 0, 1 \leq i \leq r\}. $$
 By induction, it is enough to show that $\{\kappa \in \calD_{T'} + \langle h_{\beta_1} \rangle:  L_{T,\beta_i}(\kappa)\geq 0, 1 \leq i \leq r\} \subset \calD_T.$ Since $T,T'$ only differ at $\gerp$, this is equivalent to  $\{\kappa \in \calD_{T'_\gerp} + \langle h_{\beta_1} \rangle:  L_{T,\beta_i}(\kappa)\geq 0, 1 \leq i \leq r\} \subset \calD_{T_\gerp},$ which is clear.

\end{proof}

We now summarize these results and introduce some new notation. Let $T \subseteq \BB$.
First assume $\BB_\gerp-\tilde{T}_\gerp\neq \emptyset$. For all $\beta \in \BB_\gerp-\tilde{T}_\gerp$, we set $f_\beta(T)=h^{\beta}_{\sigma^{n_\beta}{\beta}}$, and ${\bf f}_\beta(T)={\bf h}^{\beta}_{\sigma^{n_\beta}{\beta}}(T)$. For $\beta \in \tilde{T}_\gerp-T_\gerp$, we set $f_\beta(T)=-b^{\beta}_{\sigma^{n_\beta}{\beta}}$, and  ${\bf f}_\beta(T)={\bf h}^{\sigma\beta}_{\sigma^{n_\beta}\beta}(T){\bf b}_{\sigma\beta}(T)^{-p^{n_\beta-1}}$. If $\BB_\gerp-\tilde{T_\gerp}= \emptyset$, for $\beta \in \BB_\gerp-T_\gerp$, we set $f_\beta(T)=-(1+p^{|\BB_\gerp|})e_\beta$, and ${\bf f}_\beta(T)={\bf h}_\beta^{\sigma\beta}(T)({\bf b}_{\sigma\beta}(T))^{-p^{|\BB_\gerp|-1}}$. By Theorem \ref{thm:main}, for any $T \subseteq \BB$, we have 
\[
\calC_T=\langle f_\beta(T): \beta \in \BB-T \rangle + \langle \pm b_\beta: \beta \in T \rangle,
\]
and $0 \neq {\bf f}_\beta(T) \in H^0(X_T,\omega^{f_\beta(T)})$, for all $\beta \in \BB-T$.

Lemma \ref{lemma: intersect} implies that for each $\beta \in \BB-T$, the zero locus of ${\bf f}_\beta(T)$ intersects every connected component of $X_T$ in a smooth divisor. It follows that if $C$ is a connected component of $X_T$, then $$\calC(C,\omega^{\bullet})=\calC(X_T,\omega^{\bullet})=\calC_T.$$

\begin{remark} We defined the cone of weights on $X_T$ in a way that amounts to the sum of the cones of weights of its connected components. An alternative definition would have been to consider the intersection of the cones of weights of the connected components of $X_T$, in which case, Lemma \ref{lemma: intsum} would not generally hold true. However, the two notions coincide for $X_T$ by the discussion above. \end{remark}

\section{Minimal cones} In this section, we determine the minimal cones of strata in the spirit of \cite{DK1,DK2}, and deduce automatic divisibility results. We will first define a stratification on $X_T$, and study its properties which we need in the determination of the minimal cone of $X_T$.

\subsection{A stratification on $X_T$} \label{subsection: stratification} We begin by some preparation. In the rest of this paper, we frequently use the notation introduced at the end of \S \ref{section: main}.

\begin{prop}\label{prop: torsion} Let $T \subseteq \BB$, $\gerp|p$, and $\beta \in \BB_\gerp-T_\gerp$.
\begin{enumerate}
\item If $\tilde{T}_\gerp\neq \BB_\gerp$, then,  on $X_T \cap Z({\bf f}_\beta(T))$, we have $\omega_{\sigma^{n_\beta}\beta}  \cong {\omega_\beta}^{(-p)^{n_\beta}}$, and  on $X_T - Z({\bf f}_\beta(T))$, we have $\omega_{\sigma^{n_\beta}\beta}  \cong {\omega_\beta}^{-(-p)^{n_\beta}}$.
\item If $\tilde{T}_\gerp=\BB_\gerp$, then, on $X_T \cap Z({\bf f}_\beta(T))$, we have $\omega_\beta^{(-p)^{|\BB_\gerp|}-1 }\cong \calO_{X_T\cap Z({\bf f}_\beta(T))}$. We also have   $\omega_\beta^{-(-p)^{|\BB_\gerp|}-1 }\cong \calO_{X_T- Z({\bf f}_\beta(T))}$ on 
 $X_T - Z({\bf f}_\beta(T))$.
\end{enumerate}
\end{prop}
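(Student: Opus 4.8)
The plan is to treat the open piece $X_T - Z({\bf f}_\beta(T))$ and the closed piece $X_T \cap Z({\bf f}_\beta(T))$ separately, reducing each asserted isomorphism to torsion relations that are already available: the nowhere-vanishing sections ${\bf b}_\tau$ of Definition \ref{dfn: b} and the triviality statement of Corollary \ref{cor: torsion}. The open piece is essentially immediate: on $X_T - Z({\bf f}_\beta(T))$ the section ${\bf f}_\beta(T)$ is nowhere vanishing, so $\omega^{f_\beta(T)}$ is trivial there, and reading off $f_\beta(T)$ from the formulas recalled at the end of \S\ref{section: main} gives exactly the second isomorphism in each of (1) and (2). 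In doing this one uses that $n_\beta$ is odd when $\beta \in \BB_\gerp - \tilde{T}_\gerp$, that $n_\beta$ is even when $\beta \in \tilde{T}_\gerp - T_\gerp$ with $\tilde{T}_\gerp \neq \BB_\gerp$, and that $|\BB_\gerp|$ is even when $\tilde{T}_\gerp = \BB_\gerp$; these parity statements are what convert $p^{n_\beta}$ into $-(-p)^{n_\beta}$, and they follow from $\tilde{T}_\gerp$ being a disjoint union of the extended chains $\tilde{C}_j$, each of even cardinality, together with the position within $\tilde{C}_j$ of the extra element $\tilde{C}_j - C_j$.

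For the closed piece the first step is to recognize $Z({\bf f}_\beta(T))$ as $\pi_{T,T^0}^{-1}$ of a larger stratum. Writing out ${\bf f}_\beta(T)$ via the product in Proposition \ref{prop: sections} (and the definitions at the end of \S\ref{section: main}), one checks that ${\bf f}_\beta(T)$ differs by a nowhere-vanishing power of some ${\bf b}_\tau(T)$ from a section ${\bf h}^{\beta'}_\gamma(T)$; the compatibility in Proposition \ref{prop: sections} then gives ${\bf h}^{\beta'}_\gamma(T) = \pi_{T,T^0}^*{\bf h}^{\beta'}_\gamma(T^0)$ for an explicit $T \subseteq T^0 \subseteq \tilde{T}$, and over $X_{T^0}$ the section ${\bf h}^{\beta'}_\gamma(T^0)$ agrees, up to nowhere-vanishing ${\bf b}$-factors, with a single partial Hasse invariant ${\bf h}_\gamma(T^0)$. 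By smoothness of the Goren-Oort stratification (as in Lemma \ref{lemma: intersect}) the zero scheme of ${\bf h}_\gamma(T^0)$ in $X_{T^0}$ is the stratum $X_{T^+}$ with $T^+ = T^0 \cup \{\gamma\}$, so $Z({\bf f}_\beta(T)) = \pi_{T,T^0}^{-1}(X_{T^+})$. A short combinatorial verification shows that $T^+$ contains $\{\sigma\beta, \sigma^2\beta, \dots, \sigma^{n_\beta}\beta\}$ when $\tilde{T}_\gerp \neq \BB_\gerp$ (here one uses $\sigma\beta \in T_\gerp$ in the subcase $\beta \in \tilde{T}_\gerp - T_\gerp$), and contains all of $\BB_\gerp$ when $\tilde{T}_\gerp = \BB_\gerp$. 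Hence on $X_{T^+}$ the isomorphisms $\omega_\tau \cong \omega_{\sigma^{-1}\tau}^{-p}$ for $\tau \in T^+$ iterate to $\omega_{\sigma^{n_\beta}\beta} \cong \omega_\beta^{(-p)^{n_\beta}}$ (respectively, by Corollary \ref{cor: torsion}, $\omega_\beta^{(-p)^{|\BB_\gerp|}-1} \cong \calO$). Pulling these back along $\pi_{T,T^0}$, which satisfies $\pi_{T,T^0}^*\omega_\delta \cong \omega_\delta$ whenever $\delta \notin T^0$ --- in particular for $\delta = \beta$ and $\delta = \sigma^{n_\beta}\beta$ --- yields the first isomorphism in each of (1) and (2).

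The main obstacle is the bookkeeping compressed into the phrases "one checks" and "a short combinatorial verification": one must carry the definition of ${\bf f}_\beta(T)$ through Proposition \ref{prop: sections}, compute $T^0$ and the exponents $\delta_i$ there, confirm that exactly one of the factors is a genuine partial Hasse invariant while the rest are units, and keep careful track of the parity of $n_\beta$ and of the inclusion $\sigma\beta \in T_\gerp$ for $\beta \in \tilde{T}_\gerp - T_\gerp$. Once $Z({\bf f}_\beta(T))$ has been identified as $\pi_{T,T^0}^{-1}(X_{T^+})$, everything that remains is a formal pullback of torsion relations already established.
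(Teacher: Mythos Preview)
Your proposal is correct and follows essentially the same route as the paper. For the closed piece you pass to the same auxiliary $T^0$ as the paper (the one appearing in Proposition~\ref{prop: sections}), identify $Z({\bf f}_\beta(T))$ as $\pi_{T,T^0}^{-1}(X_{T^0\cup\{\gamma\}})$ with $\gamma=\sigma^{n_\beta}\beta$ (or $\gamma=\beta$ in case~(2)), and then pull back the ${\bf b}$-iterations (resp.\ Corollary~\ref{cor: torsion}); this is exactly what the paper does. Your treatment of the open piece is a mild streamlining: you note directly that ${\bf f}_\beta(T)$ trivializes $\omega^{f_\beta(T)}$ on its non-vanishing locus and then invoke the parity of $n_\beta$ (resp.\ $|\BB_\gerp|$) to match the stated exponent, whereas the paper still routes through $T^0$ and writes the trivializing section explicitly as ${\bf h}_{\sigma^{n_\beta}\beta}(T^0)\prod_i{\bf b}_{\sigma^i\beta}(T^0)^{(-p)^{n_\beta-i}}$. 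The parity facts you use are correct (each $\tilde C_j$ has even cardinality, and the extra element $\tilde C_j-C_j$ sits at an even position within any maximal consecutive block of $\tilde T_\gerp$), so nothing is missing.
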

\begin{proof} In case (1), we write $\{\sigma^i\beta: 0 < i < n_\beta\} \cap (\tilde{T}-T)=\{\beta_1,...,\beta_t\}$. Let $T^0=T\cup \{\beta_1,...,\beta_t\}$. The map $\pi_{T,T^0}: X_T \rightarrow X_{T^0}$ restricts to give a morphism $$\pi_{T,T^0}: X_T \cap Z({\bf f}_\beta(T))\rightarrow X_{T^0} \cap Z({\bf f}_\beta(T^0)),$$ 
since, by the second statement in Proposition \ref{prop: sections}, we have  $Z(\pi_{T,T^0}^*{\bf f}_\beta(T^0))=Z({\bf f}_\beta(T))$. In particular, $\pi_{T,T^0}^*\omega_{\tau}=\omega_\tau$ on $X_T \cap Z({\bf f}_\beta(T))$, for $\tau=\beta, \sigma^{n_\beta}\beta$, by the properties of $\pi_{T,T^0}$ discussed in \S \ref{section: morphisms} . The first claim now follows since $\prod_{i=1}^{n_\beta} {\bf b}_{\sigma^i\beta}(T^0\cup\{\sigma^{n_\beta}\beta\})^{-(-p)^{n_\beta-i}}$ is a nowhere vanishing section of $\omega_{\sigma^{n_\beta}\beta}^{-1}\otimes {\omega_\beta}^{(-p)^{n_\beta}}$ on $$X_{T^0} \cap Z({\bf f}_\beta(T^0))=X_{T^0} \cap Z({\bf h}_{\sigma^{n_\beta}\beta}(T^0))=X_{T^0\cup\{\sigma^{n_\beta}\beta\}}.$$ Similarly, we have an induced morphism $$\pi_{T,T^0}: X_T -Z({\bf f}_\beta(T))\rightarrow X_{T^0} - Z({\bf f}_\beta(T^0)),$$ satisfying $\pi_{T,T^0}^*\omega_{\tau}=\omega_\tau$ on $X_T - Z({\bf f}_\beta(T))$, for $\tau=\beta, \sigma^{n_\beta}\beta$, and the second claim follows because  ${\bf h}_{\sigma^{n_\beta}\beta}(T^0)\prod_{i=1}^{n_\beta-1} {\bf b}_{\sigma^i\beta}(T^0)^{(-p)^{n_\beta-i}}$ is a nowhere vanishing section of $\omega_{\sigma^{n_\beta}\beta}^{-1}\otimes {\omega_\beta}^{-(-p)^{n_\beta}}$ on $X_{T^0}-Z({\bf f}_\beta(T^0))=X_{T^0} - Z({\bf h}_{\sigma^{n_\beta}\beta}(T^0))$.

In case (2), letting $T^0=T \cup (\BB_\gerp-\{\beta\})$, and arguing similarly, it is enough to prove the statements over $X_{T^0} \cap Z({\bf f}_\beta(T^0))$. Since  $$X_{T^0} \cap Z({\bf f}_\beta(T^0))=X_{T^0} \cap Z({\bf h}_\beta(T^0))=X_{T^0 \cup \{\beta\}},$$ and $\BB_\gerp \subset T^0 \cup \{\beta\}$, by Corollary \ref{cor: torsion}, we have $\omega_\beta^{(-p)^{|\BB_\gerp|}-1}\cong\calO_{X_{T^0} \cap Z({\bf f}_\beta(T^0)}$. The second claim follows  since   ${\bf h}_{\beta}(T^0)\prod_{i=1}^{n_\beta-1} {\bf b}_{\sigma^i\beta}(T^0)^{(-p)^{n_\beta-i}}$  is a nowhere vanishing section of $\omega_\beta^{-(-p)^{|\BB_\gerp|}-1}$ on $X_{T^0} - Z({\bf f}_\beta(T^0))=X_{T^0}  -Z({\bf h}_\beta(T^0))$.
 
\end{proof}

\begin{dfn} Let $T \subseteq \BB$, and $J \subseteq \BB-T$. We define $$X_{T,J}=X_T \cap \big(\bigcap_{\beta \in J} Z({\bf f}_\beta(T))\big),$$
$$W_{T,J}=X_{T,J}-{\bigcup}_{\beta \not \in J} X_{T,\{\beta\}}.$$ \end{dfn}

\begin{prop}\label{prop: quasi-affine} If $T \subseteq \BB$ and $J \subseteq \BB-T$, then $W_{T,J}$ is quasi-affine.
\end{prop}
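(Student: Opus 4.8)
The goal is to show that $W_{T,J}$ is quasi-affine; the natural approach is to realize $W_{T,J}$ as an open subscheme of an affine scheme, by exhibiting a proper subscheme $X_{T,J}$ inside which $W_{T,J}$ sits as the complement of an ample divisor, or more directly to write $W_{T,J}$ as a complement of the vanishing locus of sections of a trivializable bundle on $X_{T,J}$.  The first step will be to understand $X_{T,J}$ itself.  Using Proposition~\ref{prop: sections}, the second statement there, together with the compatibility of the ${\bf f}_\beta(T)$ under the maps $\pi_{T,T'}$, I expect that $X_{T,J}$ is again (up to the $(\PP^1)$-bundle structure coming from \S\ref{section: morphisms}) a Goren--Oort-type stratum: intersecting with $Z({\bf f}_\beta(T))$ for $\beta \in J$ should amount, after pulling back along an appropriate $\pi_{T,T^0}$, to passing to a larger stratum $X_{T',J'}$ with $T' \supseteq T$.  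In fact, tracking the argument in the proof of Proposition~\ref{prop: torsion}(1), intersecting with $Z({\bf f}_\beta(T))$ for all $\beta \in J$ adds $\bigcup_{\beta\in J}\big(\{\sigma^i\beta : 0\le i\le n_\beta\}\cap \tilde T\big)$-type indices to $T$ — I would make this bookkeeping precise first, reducing to the case where $J$ is ``saturated'' relative to $T$ and $X_{T,J}$ is itself one of the strata $X_{T'}$ (possibly after removing the $\PP^1$-bundle directions, which don't affect quasi-affineness since removing the complement of an affine open in each $\PP^1$-fibre is harmless).

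**Main step.**  Having identified $X_{T,J}$ with a stratum $X_{T'}$ (or a $(\PP^1)^{\ast}$-bundle over one), I would use Proposition~\ref{prop: torsion} and Corollary~\ref{cor: torsion} to produce, for each $\beta \in (\BB - T) - J$, a line bundle on $X_{T,J}$ that becomes trivial precisely on the locus $X_{T,\{\beta\}} \cap X_{T,J}$ and anti-trivial off it — or, more usefully, a nowhere-vanishing section of a power of $\omega_{\sigma^{n_\beta}\beta}\otimes\omega_\beta^{\mp(-p)^{n_\beta}}$ as in that proposition.  Concretely: on $X_{T,J} - X_{T,\{\beta\}}$ the bundle $\omega_\beta$ is torsion (either by Corollary~\ref{cor: torsion} in the case $\tilde T_\gerp = \BB_\gerp$, or by combining the isomorphisms $\omega_{\sigma^{n_\beta}\beta}\cong\omega_\beta^{\pm(-p)^{n_\beta}}$ across the stratum and its complement).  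The upshot should be that every $\omega_\beta$, $\beta\in\BB$, is torsion on $W_{T,J}$, hence $\bigoplus_\beta \omega_\beta$ has trivial (finite) class, and then the ${\bf f}_\beta(T)$ (and their nowhere-vanishing complements from Proposition~\ref{prop: torsion}) exhibit $W_{T,J}$ as $\Spec$ of the global sections of a sheaf of algebras over $X_{T,J}$ — more precisely, $W_{T,J}$ is covered by the non-vanishing loci of a single section of a torsion line bundle, which is affine over $X_{T,J}$, and $X_{T,J}$ is projective; iterating over $\beta \notin J$ gives quasi-affineness.  Alternatively, and perhaps cleanest: $W_{T,J}$ is the complement in the projective scheme $X_{T,J}$ of the divisors $X_{T,\{\beta\}}$, each of which is (a power of) the zero locus of a section of a line bundle that is \emph{anti-ample} along the curves/fibres it meets — here I would mimic the anti-ampleness argument in the proof of Lemma~\ref{lemma: exceptional}, where ${\bf h}_\beta(T)$ forced $\omega_\beta$ to be anti-ample.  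One then needs that removing the support of an effective \emph{ample} divisor from a projective scheme yields an affine (hence a fortiori quasi-affine) scheme; since here the relevant divisors $X_{T,\{\beta\}}$ may be supports of ample divisors only after passing to a power, the same conclusion holds.

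**The obstacle.**  The delicate point is the ampleness/anti-ampleness bookkeeping: $\omega_\beta$ is only torsion or only ``fibrewise anti-ample'' on the complement $X_{T,J} - X_{T,\{\beta\}}$, and I must check that the complement of the \emph{union} $\bigcup_{\beta\notin J} X_{T,\{\beta\}}$ really is quasi-affine — i.e., that the divisors assemble into an ample (or ``big enough'') divisor on $X_{T,J}$, not merely that each is individually ample on its own locus.  This requires either (i) a direct construction of an ample divisor on $X_{T,J}$ supported on $\bigcup X_{T,\{\beta\}}$ by taking an explicit positive combination of the $f_\beta(T)$-weights, using the explicit description $\calC_{T,J} = \langle f_\beta : \cdots\rangle + \langle \pm b_\beta\rangle$ and Proposition~\ref{prop:explicit} to see that some such combination lands strictly in the ample cone; or (ii) an induction on $|(\BB-T)-J|$, removing one $X_{T,\{\beta\}}$ at a time and observing that ``affine morphism over quasi-affine is quasi-affine,'' since removing $X_{T,\{\beta\}}$ from $X_{T,J}$ is, by Proposition~\ref{prop: torsion}, the removal of the zero locus of a section of a torsion line bundle, which is an affine morphism.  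I expect route (ii) to be the smoother one, and I would organize the proof as that induction, with Proposition~\ref{prop: torsion} supplying the affineness of each single removal.
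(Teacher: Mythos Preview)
You have the right key observation buried in your ``Main step'': every $\omega_\beta$ is torsion on $W_{T,J}$.  This is exactly what the paper uses, and it follows directly from Proposition~\ref{prop: torsion} (for $\beta \in (\BB-T)-J$, on the complement of $Z({\bf f}_\beta(T))$), from Definition~\ref{dfn: b} (for $\beta\in T$), and from the case $\beta\in J$ handled again by Proposition~\ref{prop: torsion} on $Z({\bf f}_\beta(T))$.  But you then miss the one-line finish and instead launch into unnecessary structural analysis of $X_{T,J}$ and an induction.

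The paper's argument is simply this: the line bundle $\bigotimes_{\beta\in\BB}\omega_\beta$ is ample on all of $X$, hence its restriction to the locally closed subscheme $W_{T,J}$ is still ample.  Since each $\omega_\beta$ is torsion on $W_{T,J}$, so is $\bigotimes_\beta\omega_\beta$; thus some positive power of it is $\calO_{W_{T,J}}$, which is therefore ample.  A quasi-compact scheme with ample structure sheaf is quasi-affine.  That is the entire proof.

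Your detours are not only unnecessary but contain gaps.  The attempt to identify $X_{T,J}$ with a stratum $X_{T'}$ (or a $(\PP^1)^\ast$-bundle over one) is not established in the paper and would require its own argument; the individual identifications in Proposition~\ref{prop: torsion} do not obviously iterate.  More seriously, your induction route (ii) does not work as stated: Proposition~\ref{prop: torsion} only tells you the relevant line bundle is torsion on the \emph{complement} of $X_{T,\{\beta\}}$, not on $X_{T,J}$ itself, so you cannot conclude that the open immersion $X_{T,J}-X_{T,\{\beta\}}\hookrightarrow X_{T,J}$ is affine.  The paper sidesteps all of this by never needing $X_{T,J}$ to have any special structure --- only that $W_{T,J}$ inherits ampleness of $\bigotimes\omega_\beta$ from $X$.
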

\begin{proof} Using all the isomorphisms given in Proposition \ref{prop: torsion} and Definition \ref{dfn: b}, it is easy to see that  for all $\beta \in \BB$, $\omega_\beta$ is a torsion line bundle over $W_{T,J}$. Since $\bigotimes_{\beta \in \BB} \omega_\beta$ is ample on $X$, it follows that its restriction to $W_{T,J}$ is both ample and torsion, which implies that $W_{T,J}$ is quasi-affine (since it is quasi-compact).
\end{proof}
  
  \begin{cor}\label{cor: intersection} Let $T \subset \BB$, $J \subset \BB-T$, and assume $X_{T,J}$ is nonempty.  Then, for any $\beta \not\in T\cup J$, $X_{T,\{\beta\}}$ intersects every connected component of $X_{T,J}$.
 \end{cor}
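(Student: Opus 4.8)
The plan is to deduce this from the quasi-affineness of $W_{T,J}$ established in Proposition \ref{prop: quasi-affine}, together with the fact that $X_{T,\{\beta\}}$ is cut out inside $X_{T,J}$ by a single section whose non-vanishing locus is (an open subscheme of) a quasi-affine scheme. Fix $\beta \notin T \cup J$ and let $C$ be a connected component of $X_{T,J}$. First I would note that $C$ is a nonempty, connected, smooth projective variety over $\Fpbar$: indeed $X_{T,J}$ is obtained from the stratum $X_T$ (smooth, projective, equidimensional) by successively intersecting with the zero loci $Z({\bf f}_\gamma(T))$, $\gamma \in J$, each of which — by Lemma \ref{lemma: intersect} applied to the relevant stratum, combined with the pullback compatibility in Proposition \ref{prop: sections} — meets every connected component in a nonempty smooth divisor, so the intersection stays smooth, projective and equidimensional (though it need not be irreducible).

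Next I would argue that $C \cap Z({\bf f}_\beta(T))$ is nonempty. Suppose not; then $C$ is disjoint from $Z({\bf f}_\beta(T))$, so $C \subseteq X_{T,J} - X_{T,\{\beta\}}$. Since $\beta \notin J$, by definition $C \subseteq X_{T,J} - \bigcup_{\gamma \notin J} X_{T,\{\gamma\}}$ only after also removing the other strata; but in any case $C \setminus \bigcup_{\gamma \notin J,\, \gamma \neq \beta} X_{T,\{\gamma\}}$ is a nonempty open subscheme of $W_{T,J}$ (it is $W_{T,J} \cap C$ once we know $C$ avoids $X_{T,\{\beta\}}$), hence quasi-affine by Proposition \ref{prop: quasi-affine}. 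On the other hand, removing finitely many divisors from the connected projective variety $C$ leaves a variety that is either empty or has no nonconstant global functions only if... — more cleanly: a nonempty connected projective variety of dimension $\geq 1$ cannot contain a nonempty open quasi-affine subscheme that is also closed, and the complement of the $X_{T,\{\gamma\}}$'s in $C$ is dense open in $C$ while being quasi-affine forces every irreducible component of $C$ to have dimension $0$ or to meet $X_{T,\{\beta\}}$. So the only way to avoid a contradiction is that $C$ has dimension zero, i.e. $X_{T,J}$ is already a union of (reduced) points; but in that degenerate case $X_{T,J} = X_{T,J'}$ for $J' = \BB - T$ up to the torsion phenomena of Proposition \ref{prop: torsion}, and then $X_{T,\{\beta\}} \supseteq X_{T,\BB-T}$ meets every point, again giving the claim. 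I would phrase the non-degenerate case precisely as: if some irreducible component $C_0$ of $C$ avoids $Z({\bf f}_\beta(T))$, then $C_0 \setminus \bigcup_{\gamma \notin J} X_{T,\{\gamma\}}$ equals $C_0$ minus a proper closed subset, hence is a nonempty open quasi-affine subscheme of a projective variety $C_0$ of positive dimension, which is impossible unless $\dim C_0 = 0$.

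Cleaner alternative, which I would actually adopt: restrict the nowhere-vanishing / ample torsion bundle directly. On $W_{T,J}$ the bundle $\bigotimes_{\gamma \in \BB}\omega_\gamma$ is torsion (Proposition \ref{prop: torsion}, Definition \ref{dfn: b}); but on $X_{T,J} - Z({\bf f}_\beta(T))$ the same bundle, while still torsion in the $\gamma \neq$ relevant directions, acquires — via the second isomorphism in each part of Proposition \ref{prop: torsion} — a nontrivial ample contribution from $\omega_\beta$ unless $X_{T,\{\beta\}}$ already meets $C$. Concretely: if $C \cap X_{T,\{\beta\}} = \emptyset$ then $C \subseteq X_{T,J} - Z({\bf f}_\beta(T))$, so ${\bf f}_\beta(T)$ is a nowhere-vanishing section of $\omega^{f_\beta(T)}$ on $C$, forcing $\omega^{f_\beta(T)} \cong \calO_C$; combining with the torsion relations on the other $\omega_\gamma$ shows $\bigotimes_{\gamma}\omega_\gamma$ is torsion on $C$, hence $C$ is affine (being projective and having an ample torsion bundle, it is finite), contradicting $\dim C \geq 1$ — and when $\dim C = 0$ the statement is vacuous since then $X_{T,\{\beta\}} \cap X_{T,J}$, if nonempty as forced by $X_{T,J}$ being nonempty together with the structure of the $X_{T,\{\gamma\}}$'s, already contains that point. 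The main obstacle is bookkeeping: correctly tracking which $\omega_\gamma$ are torsion versus ample on the locally closed pieces, and handling the low-dimensional edge cases where "intersects every connected component" is automatic. I would isolate the ampleness/torsion dichotomy as the crux and defer the edge cases to a one-line remark that $X_{T,J}$ nonempty and $\beta \notin T \cup J$ implies $X_{T,\{\beta\}} \cap X_{T,J} \neq \emptyset$ via Lemma \ref{lemma: intersect} applied inside $X_{T,J}$.
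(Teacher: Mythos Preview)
Your ``cleaner alternative'' has a genuine gap. You assume that if a component $C$ of $X_{T,J}$ avoids $X_{T,\{\beta\}}$ for \emph{one} fixed $\beta$, then $\bigotimes_\gamma\omega_\gamma$ is torsion on $C$. But the relation $\omega^{f_\beta(T)}\cong\calO_C$ you obtain from the nowhere-vanishing of ${\bf f}_\beta(T)$, together with the ${\bf b}_\gamma$-relations for $\gamma\in T$ and the relations from Proposition~\ref{prop: torsion} for $\gamma\in J$, give at most $|T|+|J|+1$ independent relations among the $\omega_\gamma$. When $|\BB-(T\cup J)|>1$ there remain $\gamma'\notin T\cup J$ with $\gamma'\neq\beta$ for which you know nothing: $C$ may meet $Z({\bf f}_{\gamma'}(T))$ properly, so neither half of Proposition~\ref{prop: torsion} applies globally on $C$, and there is no reason for $\omega_{\gamma'}$ to be torsion. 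Your first approach suffers a related problem: removing the remaining $X_{T,\{\gamma\}}$ from $C$ yields an open subset of $W_{T,J}$, but a positive-dimensional projective variety \emph{always} has nonempty dense open quasi-affine subsets, so no contradiction follows.

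The paper closes this gap with a maximality trick you are missing: choose $J$ of \emph{maximal cardinality} among those for which the corollary fails. If $C\cap X_{T,\{\beta\}}=\emptyset$ but $C\cap X_{T,\{\tau\}}\neq\emptyset$ for some other $\tau\notin T\cup J$, then any component of $C\cap X_{T,\{\tau\}}\subset X_{T,J\cup\{\tau\}}$ still avoids $X_{T,\{\beta\}}$, contradicting maximality. Hence $C$ avoids \emph{all} $X_{T,\{\tau\}}$ with $\tau\notin T\cup J$, i.e.\ $C\subset W_{T,J}$, and now Proposition~\ref{prop: quasi-affine} gives the contradiction directly (using that $C$ is proper when $T\neq\emptyset$, and that $\dim C\ge |\BB-T|-|J|\ge 1$). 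Note also that the paper treats $T=\emptyset$ separately by citing \cite{DK2}, since $X_\emptyset$ is not proper; your argument does not address this case either.
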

 
 \begin{proof} The case $T=\emptyset$ has been proved in Corollary 5.7 of \cite{DK2}. We assume $T \neq \emptyset$, and, therefore, $X_T$ is proper. Let $J\subset \BB-T$ be a subset of maximal cardinality for which the claim fails to be true (in particular $J \neq \BB-T$). Let $C$ be a connected component of $X_{T,J}$ such that $C \cap X_{T,\{\beta\}}=\emptyset$ for some $\beta \not \in T\cup J$. The maximality assumption implies that $C \cap X_{T,\{\tau\}}=\emptyset$, for all $\tau \not\in T\cup J$. It follows that $C \subset W_{T,J}$. Since $W_{T,J}$ is quasi-compact, Proposition \ref{prop: quasi-affine} implies that $C$ is quasi-affine. Since $C$ is also proper, it follows that $C$ is finite. This is a contradiction, since $X_{T,J}$ is cut out from $X_T$ by the vanishing of $|J|$ sections, and, hence, every connected component of $X_T$ has codimension at most  $|J| < |\BB-T|={\rm dim}\ \!  X_T$.
 
 \end{proof}
 
    \begin{prop}  \label{prop:strata} Let $T \subseteq \BB$, $J \subseteq \BB-T$. We have
 
\begin{enumerate}
\item $X_{T,J}=\overline{W}_{T,J}=\bigcup_{J' \supseteq J} W_{T,J'}$.
\item Each $W_{T,J}$ (and hence $X_{T,J}$) is non-empty and equidimensional of dimension $|\BB-(T \cup J)|$.
\
\end{enumerate} \end{prop}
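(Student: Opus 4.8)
The plan is to prove the two assertions simultaneously, by a descending induction on $|J|$, running from $J = \BB - T$ down to $J = \emptyset$. When $|J| = |\BB - T|$, the set $X_{T,J}$ is cut out of $X_T$ by all $|\BB-T|$ sections ${\bf f}_\beta(T)$; by Lemma~\ref{lemma: intersect} (applied iteratively, using that each ${\bf f}_\beta(T)$ meets every connected component of the intermediate intersections in a smooth divisor, which requires a version of Corollary~\ref{cor: intersection} that is already available here since there is nothing left to intersect with) one gets that $X_{T,J}$ is non-empty and equidimensional of dimension $0$, and in this case $W_{T,J} = X_{T,J}$, so both parts hold. This establishes the base case; note that $\overline{W}_{T,J} = X_{T,J}$ trivially and the union $\bigcup_{J' \supseteq J} W_{T,J'}$ reduces to the single term.

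For the inductive step, assume the proposition holds for all $J'$ with $|J'| > |J|$. First I would prove part (2): take $\beta \in \BB - (T \cup J)$ and consider $X_{T,J} = X_{T, J \cup \{\beta\}} \cup (X_{T,J} - X_{T,\{\beta\}})$; the first piece is understood by the inductive hypothesis, and by Corollary~\ref{cor: intersection} (which itself is proved using Proposition~\ref{prop: quasi-affine} on the quasi-affineness of $W_{T,J}$) every connected component of $X_{T,J}$ meets $X_{T,\{\beta\}}$, hence meets $X_{T,J\cup\{\beta\}}$. Since $X_{T,\{\beta\}}$ is a Cartier divisor in $X_T$ (cut out by one section whose vanishing locus is a smooth divisor on each component of $X_T$ by Lemma~\ref{lemma: intersect}), intersecting it with a component of $X_{T,J}$ drops the dimension by exactly one, unless that component is contained in the divisor. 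The maximality/quasi-affineness argument of Corollary~\ref{cor: intersection} rules out the degenerate possibility: a component of $X_{T,J}$ that avoids all the $X_{T,\{\tau\}}$ for $\tau \notin T \cup J$ would be proper and quasi-affine, hence finite, contradicting a dimension count. This forces each component of $X_{T,J}$ to have dimension $\geq |\BB - (T\cup J)|$; combined with the codimension bound (it is cut by $|J|$ sections inside $X_T$, which is equidimensional of dimension $|\BB - T|$), equidimensionality and non-emptiness of $X_{T,J}$ follow, and then $W_{T,J}$, being a dense open (the complement being a union of proper closed subsets $X_{T,\{\beta\}} \cap X_{T,J}$ of strictly smaller dimension by the same argument), is non-empty and equidimensional of the same dimension.

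For part (1): the inclusion $W_{T,J} \subseteq X_{T,J}$ is clear, and $X_{T,J} = \bigcup_{J' \supseteq J} W_{T,J'}$ is a set-theoretic tautology once one checks that every point of $X_{T,J}$ lies in $W_{T,J'}$ for $J'$ the set of all $\beta \in \BB - T$ with that point in $X_{T,\{\beta\}}$ — this $J'$ contains $J$ and the point is by construction in $X_{T,J'}$ but in no $X_{T,\{\beta\}}$ for $\beta \notin J'$. It then remains to see $X_{T,J} = \overline{W}_{T,J}$, i.e. that $W_{T,J}$ is dense in $X_{T,J}$; this follows from part (2), since each $W_{T,J'}$ with $J' \supsetneq J$ has dimension $|\BB - (T\cup J')| < |\BB - (T \cup J)| = \dim X_{T,J}$, so the union of these lower strata cannot contain a component of $X_{T,J}$, and hence $W_{T,J}$ meets — and is dense in — every component. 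I expect the main obstacle to be the careful bookkeeping in the induction: one must ensure the quasi-affineness input (Proposition~\ref{prop: quasi-affine}) and the dimension bounds interlock correctly so that no component of $X_{T,J}$ is accidentally contained in $X_{T,\{\beta\}}$, and that the $T = \emptyset$ case (where $X_T$ is not proper) is genuinely reduced to the cited Corollary~5.7 of \cite{DK2} rather than re-proved; the properness of $X_T$ for $T \neq \emptyset$ is what makes the "proper + quasi-affine $\Rightarrow$ finite" contradiction available.
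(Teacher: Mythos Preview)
Your approach is correct and matches the paper's, which simply cites \cite[Prop.~5.8]{DK2} and notes that the same argument goes through using Proposition~\ref{prop: quasi-affine} and Corollary~\ref{cor: intersection}; your descending induction on $|J|$ with these two inputs is exactly that argument fleshed out.

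One small point worth tightening: your base case is phrased as an iteration of Lemma~\ref{lemma: intersect}, claiming each ${\bf f}_\beta(T)$ cuts out a \emph{smooth divisor} in the intermediate intersections $X_{T,J'}$. Lemma~\ref{lemma: intersect} only asserts this for $X_T$ itself, and smoothness of the intermediate $X_{T,J'}$ is neither proved nor needed. It is cleaner to first establish non-emptiness of all $X_{T,J}$ by a separate \emph{ascending} application of Corollary~\ref{cor: intersection} (from $X_{T,\emptyset} = X_T$ upward), and then get the $0$-dimensionality of $X_{T,\BB-T} = W_{T,\BB-T}$ directly from Proposition~\ref{prop: quasi-affine} plus properness of $X_T$ (for $T\neq\emptyset$)---the very ``proper + quasi-affine $\Rightarrow$ finite'' mechanism you already invoke in the inductive step. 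With that adjustment the bookkeeping you flag as the main obstacle works exactly as you outline.
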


\begin{proof} This can be proved exactly as in Proposition 5.8 of \cite{DK2} using proposition \ref{prop: quasi-affine}, and Corollary \ref{cor: intersection}.
\end{proof}

\begin{remark} The above construction gives a stratification on 
(the connected components of) $X_T$ which is different from the one induced by the Goren-Oort stratification on $X$ in general.
\end{remark}

\subsection{The minimal cone of $X_T$} In this section, we define and determine the minimal cone of $X_T$. We begin by stating some consequences of the results in the previous section that we will need in our arguments.

 \begin{lemma}\label{lemma: h intersect f} Let $T \subseteq \BB$, and $J \subseteq \BB-T$. Let $\beta \in \BB-(T \cup J)$, and $T'=T \cup\{\beta\}$. Then $X_{T',J}$ intersects every connected component of $X_{T,J}$.
\end{lemma}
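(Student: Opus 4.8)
The statement asserts that for $T \subseteq \BB$, $J \subseteq \BB-T$, $\beta \in \BB-(T\cup J)$, and $T'=T\cup\{\beta\}$, the locus $X_{T',J}=X_{T'}\cap\big(\bigcap_{\gamma\in J}Z({\bf f}_\gamma(T'))\big)$ meets every connected component of $X_{T,J}$. The plan is to reduce this to the case where $J$ is empty or to the divisor-intersection statements already established, by working inside a fixed connected component $C$ of $X_{T,J}$ and using that $C$ is proper of dimension $|\BB-(T\cup J)|$ (Proposition \ref{prop:strata}(2)) while the extra vanishing conditions are cut out by finitely many sections of line bundles that are \emph{ample} after restriction.

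First I would observe that $X_{T'}\cap X_{T,J}$ is, by Proposition \ref{prop: sections} (compatibility of the ${\bf f}_\gamma$ with the $\pi$-maps) and the definitions, equal to $X_{T',J}\cup(\text{loci where more }{\bf f}_\gamma\text{ vanish})$; more precisely I want to identify $X_{T',J}$ as the zero locus, inside $X_{T,J}$, of the restriction of ${\bf h}_\beta(T)$ (the partial Hasse invariant whose vanishing cuts $X_T$ down to $X_{T'}$). So the real content is: the zero locus of ${\bf h}_\beta(T)|_{X_{T,J}}$ meets every connected component of $X_{T,J}$. Now ${\bf h}_\beta(T)$ is a section of the line bundle $\omega^{h_\beta}=\omega_{\sigma^{-1}\beta}^{\otimes p}\otimes\omega_\beta^{-1}$, and over $X_{T,J}$ — by the torsion statements of Proposition \ref{prop: torsion} and Definition \ref{dfn: b}, exactly as used in the proof of Proposition \ref{prop: quasi-affine} — every $\omega_\gamma$ is torsion on the open stratum $W_{T,J}$, but on all of $X_{T,J}$ we only know $\bigotimes_{\gamma}\omega_\gamma$ is (the restriction of) an ample bundle. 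The key point to extract is that on $X_{T,J}$ the bundle carrying ${\bf h}_\beta(T)$ is, up to a positive power and twisting by the torsion bundles $\omega_\gamma$ for $\gamma\in J$ (which become trivial after a suitable power), a \emph{positive} (ample) line bundle on each connected component: this follows because on $X_{T,J}$ the classes $h_\beta$ and the $f_\gamma(T)$ for $\gamma\in J$ span, and $h_\beta$ is not in the span of the $b_\gamma$'s alone, so its image under $i_T$ is a nonzero effective-type class — concretely, using the explicit formula $h_{\sigma^{n_\beta}\beta}^\beta$ and relation \eqref{equation: Hasse} together with Proposition \ref{prop: torsion}, one rewrites $\omega^{Nh_\beta}|_{X_{T,J}}$ as (a power of) a genuinely ample bundle on each component of $X_{T,J}$.

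The cleanest route, however, mirrors the proof of Corollary \ref{cor: intersection}: argue by contradiction. Suppose $C$ is a connected component of $X_{T,J}$ disjoint from $X_{T',J}$, i.e.\ ${\bf h}_\beta(T)$ restricted to $C$ is nowhere vanishing. Then $\omega^{h_\beta}|_C\cong\calO_C$. Combining this with the isomorphisms of Proposition \ref{prop: torsion}(1)--(2) and Definition \ref{dfn: b} satisfied on $X_{T,J}$ — which express every $\omega_\gamma|_{X_{T,J}}$ in terms of the $\omega_{\gamma'}$ for $\gamma'\in\BB-(T\cup J)$ and the torsion bundles — one deduces that all $\omega_\gamma$ become torsion on $C$, hence $\bigotimes_\gamma\omega_\gamma|_C$ is simultaneously ample and torsion, forcing $C$ to be finite (it is proper and quasi-affine, as in the proof of Proposition \ref{prop: quasi-affine}). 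But $X_{T',J}$ is cut out from $X_{T,J}$ by one extra section, so if $C$ were a whole connected component disjoint from it, $C$ would have to be a component of $X_{T,J}$ of dimension strictly less than $\dim X_{T,J}=|\BB-(T\cup J)|$; since $T\neq\BB$ forces $|\BB-(T\cup J)|\ge 1$ (the case $\BB=T$ being vacuous, and $\beta$ existing ensures $T\cup J\subsetneq\BB$), equidimensionality from Proposition \ref{prop:strata}(2) gives a contradiction.

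The main obstacle I anticipate is the bookkeeping needed to show that ${\bf h}_\beta(T)|_{X_{T,J}}$ really lands in a bundle that is ample-up-to-torsion on each component — i.e.\ that the vanishing is \emph{nontrivial} on every component — rather than identically zero or identically nonzero for bad reasons; this is where one must invoke Lemma \ref{lemma: intersect} (applied after pulling back along the appropriate $\pi_{T,T^0}$, using that $Z({\bf f}_\gamma(T))=\pi_{T,T^0}^{-1}Z({\bf f}_\gamma(T^0))$) to guarantee the divisor is genuinely there, and then the torsion/ampleness dichotomy to rule out the component being swallowed. Once the reduction to a single-section cut of the proper, equidimensional $X_{T,J}$ is in place, the contradiction argument via quasi-affineness is routine and parallels Corollary \ref{cor: intersection} verbatim.
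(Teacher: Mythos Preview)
Your contradiction-via-quasi-affineness strategy is in the right spirit, but the argument as written has two real gaps.

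\textbf{Gap 1 (minor but unresolved).} You want to identify $X_{T',J}$ with $X_{T,J}\cap Z({\bf h}_\beta(T))$, so that ``$C$ disjoint from $X_{T',J}$'' becomes ``${\bf h}_\beta$ nowhere zero on $C$''. But $X_{T',J}=X_{T'}\cap\bigcap_{\gamma\in J}Z({\bf f}_\gamma(T'))$, and the sections ${\bf f}_\gamma(T')$ are built from $T'$, not $T$: passing from $T$ to $T'=T\cup\{\beta\}$ can change $\widetilde{T}$ near $\gamma$, change $n_\gamma$, and hence change both the weight and the zero locus of ${\bf f}_\gamma$. Proposition~\ref{prop: sections} gives compatibility under the $\pi$-maps, not under the inclusion $X_{T'}\hookrightarrow X_T$, so it does not give ${\bf f}_\gamma(T)|_{X_{T'}}={\bf f}_\gamma(T')$. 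Your parenthetical ``$X_{T',J}\cup(\text{more vanishing})$'' is not a proof; one direction of containment is already unclear.

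\textbf{Gap 2 (fatal as stated).} Even granting the identification, the step ``$\omega^{h_\beta}|_C\cong\calO_C$ $\Rightarrow$ all $\omega_\gamma$ torsion on $C$'' fails. The relations available on $X_{T,J}$ (from ${\bf b}_\gamma$ for $\gamma\in T$ and from Proposition~\ref{prop: torsion} for $\gamma\in J$) reduce everything to $\{\omega_{\gamma'}:\gamma'\in\BB-(T\cup J)\}$. This set has $|\BB-(T\cup J)|$ elements, and the single extra relation $\omega_\beta\cong\omega_{\sigma^{-1}\beta}^p$ cannot force them all to be torsion unless $|\BB-(T\cup J)|=1$. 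For instance, with $T=J=\emptyset$ and $|\BB|=3$, the relation $\omega_\beta\cong\omega_{\sigma^{-1}\beta}^p$ imposes nothing on $\omega_{\sigma\beta}$.

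The paper repairs exactly this by not working on $C$ directly: it first passes to the one-dimensional projective locus $Z=C\cap X_{T,J''}$ with $J''=\BB-(T\cup\{\beta\})\supseteq J$ (nonempty by Corollary~\ref{cor: intersection}, equidimensional of dimension~$1$ by Proposition~\ref{prop:strata}). On $Z$ one now has relations for every $\gamma\neq\beta$, so all $\omega_\gamma$ are expressed in terms of $\omega_\beta$ alone; the extra relation coming from the disjointness hypothesis then forces $\omega_\beta$ (hence every $\omega_\gamma$) to be torsion on $Z$, and the ample-plus-torsion contradiction goes through on the curve $Z$ rather than on $C$. Your instinct to mirror Corollary~\ref{cor: intersection} is right, but the enlargement of $J$ to $J''$ is the missing idea.
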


\begin{proof} Let $C$ be a connected component of $X_{T,J}$. By Corollary \ref{cor: intersection} and Proposition \ref{prop:strata}, we know that $Z:=C \cap X_{\BB-(J \cup \{\beta\}),J}$ is nonempty, projective, and of pure dimension $1$. Assume $X_{T',J}\cap C =\emptyset$. Then $Z \subseteq  X_{T,J}-X_{T,J'}$. In particular, $Z \cap X_{T,J'}\neq \emptyset$, which implies that $\omega_\beta\cong\omega_{\sigma^{-1}\beta}^{-p}$ on $Z$. This, together with all the isomorphisms obtained via Proposition \ref{prop: torsion}, imply that $\omega_\tau$ is torsion on $Z$, for all $\tau \in \BB$. In particular,  we find that $\bigotimes_{\tau\in \BB} \omega_\tau$ is both ample and torsion on $Z$. This implies that $Z$ is quasi-affine, which is a contradiction. \end{proof}

\begin{lemma}\label{lemma: survive} Let $T \subseteq \BB$, $\beta \in \BB-T$, and $C$ a connected component of $X_T \cap Z({\bf f}_\beta(T)$. We have $$\langle f_\tau(T): \tau \in \BB-(T \cup \{\beta\}) \rangle + \langle \pm b_\tau: \tau \in T \rangle \subset  \calC(C, \omega^\bullet).$$In particular, $\langle f_\tau(T): \tau \in \BB-(T \cup \{\beta\}) \rangle + \langle \pm b_\tau: \tau \in T \rangle \subset  \calC(X_T \cap Z({\bf f}_\beta(T)), \omega^\bullet)$.
\end{lemma}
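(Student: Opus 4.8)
The plan is to reduce the claim about the whole stratum $X_T \cap Z({\bf f}_\beta(T))$ to the claim about a single connected component $C$, then to construct the required sections on $C$ explicitly by pulling back the sections ${\bf f}_\tau$ and ${\bf b}_\tau$ and showing they remain nonzero after restriction. First I would observe that, by Proposition \ref{prop:strata}(2) applied with $J=\{\beta\}$, every connected component $C$ of $X_{T,\{\beta\}}=X_T\cap Z({\bf f}_\beta(T))$ is nonempty and equidimensional; and that $\calC(X_T\cap Z({\bf f}_\beta(T)),\omega^\bullet)$ contains the intersection of the cones $\calC(C,\omega^\bullet)$ over the components $C$, so it suffices to prove the "In particular" statement is implied by the first. (In fact, by the closing remarks of \S\ref{section: main} on $X_T$ itself, one expects the cones of the various components to coincide, but we only need the weaker containment.)

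The core of the argument is thus the first displayed inclusion for a fixed component $C$. For each $\tau \in \BB-(T\cup\{\beta\})$, I would take the section ${\bf f}_\tau(T) \in H^0(X_T,\omega^{f_\tau(T)})$ constructed at the end of \S\ref{section: main} (via Proposition \ref{prop: sections}) and restrict it to $C$. The key point to verify is that this restriction is nonzero on $C$: if it vanished identically on $C$, then $C$ would lie inside $Z({\bf f}_\tau(T))\cap Z({\bf f}_\beta(T)) = X_{T,\{\beta,\tau\}}$, but Lemma \ref{lemma: intersect} (combined with Proposition \ref{prop:strata}(1), which identifies $X_{T,\{\beta\}}=\overline{W}_{T,\{\beta\}}$ as the closure of $W_{T,\{\beta\}}$) shows that the vanishing locus of ${\bf f}_\tau(T)$ meets $C$ in a proper divisor, so it cannot contain $C$. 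Alternatively, and perhaps more cleanly, one invokes Corollary \ref{cor: intersection} with $J=\{\beta\}$: $X_{T,\{\tau\}}$ meets every connected component of $X_{T,\{\beta\}}$ in a \emph{proper} subset (since $X_{T,\{\beta,\tau\}}$ has strictly smaller dimension than $X_{T,\{\beta\}}$ by Proposition \ref{prop:strata}(2)), so ${\bf f}_\tau(T)|_C \neq 0$. For the ${\bf b}_\tau(T)$ with $\tau\in T$, no such argument is needed: these are nowhere-vanishing sections on $X_T$ by Definition \ref{dfn: b}, so their restrictions to $C$ are invertible, giving $\pm b_\tau \in \calC(C,\omega^\bullet)$ directly.

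Combining these, $C$ carries nonzero sections of weight $f_\tau(T)$ for each $\tau\in\BB-(T\cup\{\beta\})$ and invertible sections of weight $\pm b_\tau$ for $\tau\in T$; since $\calC(C,\omega^\bullet)$ is a $\QQ^{\geq 0}$-cone containing each of these weights, it contains the cone they generate, which is exactly $\langle f_\tau(T): \tau \in \BB-(T \cup \{\beta\}) \rangle + \langle \pm b_\tau: \tau \in T \rangle$. This proves the first inclusion, and hence the second.

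I expect the main obstacle to be the nonvanishing of ${\bf f}_\tau(T)|_C$: one must be careful that $C$, a connected component of the \emph{reducible} scheme $X_T\cap Z({\bf f}_\beta(T))$, is not entirely swallowed by $Z({\bf f}_\tau(T))$. This is precisely where the dimension count of Proposition \ref{prop:strata}(2) together with the "meets every component" statement of Corollary \ref{cor: intersection} does the work — $X_{T,\{\beta,\tau\}}$ has dimension $|\BB-(T\cup\{\beta,\tau\})| < |\BB-(T\cup\{\beta\})| = \dim C$, so it is a proper closed subset of $C$ and the restricted section survives. Everything else is a formal manipulation of cones.
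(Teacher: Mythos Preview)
Your proof is correct and follows essentially the same approach as the paper: restrict the sections ${\bf f}_\tau(T)$ and ${\bf b}_\tau(T)$ to $C$ and verify nonvanishing, the nontrivial case being $\tau\in\BB-(T\cup\{\beta\})$, handled by the dimension comparison showing $C\not\subseteq X_{T,\{\beta,\tau\}}$. The paper cites Lemma~\ref{lemma: h intersect f} for this step whereas you invoke Proposition~\ref{prop:strata}(2) directly, but the underlying argument is the same.
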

\begin{proof} Clearly, for all $\tau \in T$, we have $\pm b_\tau \in \calC(X_T \cap Z({\bf f}_\beta(T)), \omega^\bullet)$. If $\tau  \in \BB-(T \cup \{\beta\})$, then ${\bf f}_\tau(T)$ restricts to a nonzero section of $\omega^{f_\tau(T)}$ on any connected component of $X_T \cap Z({\bf f}_\beta(T))=X_{T,\{\beta\}}$, by Lemma \ref{lemma: h intersect f}.
\end{proof}

 Given $T \subset \BB$, and $\gerp|p$, we define  $\BB_\gerp(T)$ as follows:
 $$\BB_\gerp(T)=\begin{cases} \BB_\gerp-T_\gerp & {\rm if}\ |\BB_\gerp-\tilde{T}_\gerp| >1, \\
  \BB_\gerp-(T_\gerp \cup \{\beta\}) & {\rm if}\ \BB_\gerp-\tilde{T}_\gerp=\{\beta\}, \\
  \emptyset & {\rm if}\ \BB_\gerp-\tilde{T}_\gerp=\emptyset.
 \end{cases}$$ 
We define $\BB(T)=\bigcup_{\gerp|p} \BB_\gerp(T)$. Note that $\beta \in \BB(T)$ is equivalent to the condition that 
$n_\beta$ is defined, and $\sigma^{n_\beta}\beta \neq \beta$.

\begin{dfn} For any $\beta \in \BB(T)$, define $\epsilon_{T,f_\beta}: \BB \rightarrow  \{-1,0,1\}$ as
$$\epsilon_{T,f_\beta}(\tau)=\begin{cases} -\epsilon_T(\tau) & {\rm if}\ \tau \in \{\beta,\sigma\beta,...,\sigma^{\mu_\beta-1}\beta\}, \\ \ \epsilon_T(\tau) &{\rm otherwise.}
 \end{cases}$$ 
 Given $\beta \in \BB(T)$, using notation from \S \ref{section:explicit}, for  $\tau \in \BB-(T\cup \{\sigma^{n_\beta}\beta\})$, we define 
 $$L_{T,f_\beta,\tau}=\begin{cases} L^{\epsilon_{T,f_\beta}}[\sigma^{\mu_\tau}\tau]& {\rm if}\  \beta \neq \tau \in \BB-\tilde{T}, \\  
 L_{T,\tau}& {\rm if}\  \beta \neq \tau \in \tilde{T}-T,\\
 L^{\epsilon_{T,f_\beta}}[\beta,\sigma^{-1}\tilde{\beta}]&{\rm if}\ \beta=\tau \in \BB-\tilde{T}, \\  
 L^{\epsilon_{T,f_\beta}}[\tilde\beta]&{\rm if}\  \beta=\tau \in \tilde{T}-T,
 \end{cases}$$
 where, $\beta'={\sigma^{n_\beta}\beta}$, and $\tilde{\beta}=\sigma^{\mu_{\beta'}}\beta'$. Note that if $\beta \in \BB_\gerp$, then, $L_{T,f_\beta,\tau}=L_{T,\tau}$, for all $\tau \not \in \BB_\gerp$. Finally, for $\beta \in \BB(T)$, we define $$\calD_{T,f_\beta}=\{\kappa \in \QQ^\BB: L_{T,f_\beta,\tau}(\kappa)\geq 0: \forall \tau \in \BB-(T\cup \{\sigma^{n_\beta}\beta\})\}.$$
 
 \end{dfn}

   \begin{thm} \label{thm: general cones} Let $T \subset \BB$, and $\beta \in \BB-T$. \begin{enumerate}
\item If $\beta \in\BB(T)$, we have 
\begin{enumerate}
\item ${\rm dim}\ H^0(C,\omega^{f_\beta(T)})=1$, for any connected component $C$ of $X_T$.
\item $\calC(X_T \cap Z({\bf f}_\beta(T)),\omega^\bullet)=\calD_{T,f_\beta}.$
\end{enumerate}
\item If $\beta\not\in \BB(T)$, we have $\calC_T  \subseteq \calC(X_T \cap Z({\bf f}_\beta(T)),\omega^\bullet)$.
\end{enumerate}
\end{thm}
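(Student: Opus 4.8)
The plan is to treat the two cases separately, with case (2) being quite short and case (1) requiring the bulk of the work. For case (2), when $\beta \notin \BB(T)$, the point is that either $\BB_\gerp - \tilde{T}_\gerp = \emptyset$ (so $\tilde T_\gerp = \BB_\gerp$ and $\omega_\beta$ is already torsion on $X_T$ by Corollary \ref{cor: torsion}), or $\BB_\gerp - \tilde T_\gerp = \{\beta\}$ with $n_\beta$ not well-behaved, i.e.\ $\sigma^{n_\beta}\beta = \beta$; in either situation the divisor $Z({\bf f}_\beta(T))$ is cut out by a section whose weight already lies in the span of the torsion relations, so restriction to $X_T \cap Z({\bf f}_\beta(T))$ loses no positivity constraint. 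Concretely, I would invoke Lemma \ref{lemma: survive} together with the explicit generators $f_\tau(T)$ of $\calC_T$ from Theorem \ref{thm:main}: each ${\bf f}_\tau(T)$ for $\tau \in \BB - (T\cup\{\beta\})$ restricts to a nonzero section on every connected component of $X_T \cap Z({\bf f}_\beta(T))$ by Lemma \ref{lemma: h intersect f}, and for this exceptional $\beta$ one checks that ${\bf f}_\beta(T)$ itself — or rather the torsion isomorphism $\omega_\beta^{\pm(-p)^{|\BB_\gerp|}-1} \cong \calO$ of Proposition \ref{prop: torsion}(2) — provides the remaining generator $f_\beta(T)$ (up to a positive multiple) on the divisor as well, giving $\calC_T \subseteq \calC(X_T \cap Z({\bf f}_\beta(T)),\omega^\bullet)$.

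For case (1), I would first establish (a), the one-dimensionality of $H^0(C,\omega^{f_\beta(T)})$. The strategy is to reduce via $\pi_{T,T^0}$ (with $T^0 = T \cup (\{\sigma^i\beta: 0<i<n_\beta\}\cap\tilde T)$ as in Proposition \ref{prop: sections}) to the case $T = T^0$, using Lemma \ref{lemma: pullback} to transport $H^0$ isomorphically — noting that $f_\beta(T)$ satisfies the vanishing hypothesis on the relevant weight components. Over $X_{T^0}$, the section ${\bf f}_\beta(T^0)$ cuts out (by Lemma \ref{lemma: intersect}) a nonempty smooth divisor meeting every component, so its zero divisor is effective and reduced; any other section of $\omega^{f_\beta(T^0)}$ differs from it by a global function on the complement of that divisor, and since $W := X_{T^0} - Z({\bf f}_\beta(T^0))$ carries all $\omega_\tau$ as torsion bundles (Proposition \ref{prop: torsion}(1) or (2)) it is quasi-affine — but it is also obtained by removing a divisor from a proper (or at worst the $T=\emptyset$) variety, forcing the ratio to be constant on each component; hence $\dim H^0(C,\omega^{f_\beta(T)}) = 1$.

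For (1)(b), the equality $\calC(X_T \cap Z({\bf f}_\beta(T)),\omega^\bullet) = \calD_{T,f_\beta}$, I would argue by double inclusion exactly in the spirit of the proof of Theorem \ref{thm:main}. For ``$\supseteq$'': the generators $f_\tau(T)$ for $\tau \neq \beta$ restrict to nonzero sections on $X_T \cap Z({\bf f}_\beta(T))$ by Lemma \ref{lemma: survive}/\ref{lemma: h intersect f}, and on the divisor the isomorphism $\omega_{\sigma^{n_\beta}\beta} \cong \omega_\beta^{(-p)^{n_\beta}}$ from Proposition \ref{prop: torsion}(1) (the $+$ sign, since we are \emph{on} $Z({\bf f}_\beta)$) means that the sign of $\epsilon_T$ flips on the string $\{\beta,\dots,\sigma^{\mu_\beta-1}\beta\}$ — which is precisely the passage from $L_{T,\tau}$ to $L_{T,f_\beta,\tau}$ — so the reduced generators land in $\calD_{T,f_\beta}$ and one rebuilds the $\pm b_\tau$ part. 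For ``$\subseteq$'': apply Lemma \ref{lemma: intsum} on $X_T \cap Z({\bf f}_\beta(T))$ using the sections ${\bf h}_{\tau}$ for $\tau$ running over the remaining $\tilde T_\gerq - T_\gerq$, combined with Proposition \ref{prop: O(1)} and Lemma \ref{lemma: exceptional} applied to the further-degenerated strata (with $\sigma^{n_\beta}\beta$ now playing a distinguished role since $\omega_{\sigma^{n_\beta}\beta}$ has become torsion on the divisor), to get the inequalities $L_{T,f_\beta,\tau}(\kappa) \geq 0$ for all $\tau \in \BB - (T \cup \{\sigma^{n_\beta}\beta\})$, plus an induction on $|\tilde T - T|$ as before.

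The main obstacle I expect is the bookkeeping in (1)(b): one must verify that after restricting to $Z({\bf f}_\beta(T))$ the combinatorics of $\tilde{T}$, $n_\bullet$, and $\mu_\bullet$ shift in exactly the way encoded by $\epsilon_{T,f_\beta}$ and the case-split definition of $L_{T,f_\beta,\tau}$ — in particular tracking how $\sigma^{n_\beta}\beta$ migrates (it is removed from the list of ``free'' embeddings because $\omega_{\sigma^{n_\beta}\beta}$ becomes torsion on the divisor) and how the auxiliary embedding $\tilde\beta = \sigma^{\mu_{\beta'}}\beta'$ enters. This is the same kind of delicate sign-and-exponent chase that appears in Case 1 of the proof of Theorem \ref{thm:main}, and it will need to be done carefully but presents no conceptual difficulty beyond that already encountered there.
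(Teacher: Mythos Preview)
Your treatment of part (2) is essentially the paper's argument: Lemma \ref{lemma: survive} supplies all generators except $f_\beta(T)$, and for that one you pass via $\pi_{T,T'}$ (with $T' = T\cup(\BB_\gerp-\{\beta\})$) to the situation $\BB_\gerp \subset T'\cup\{\beta\}$ where the claim is trivial.

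Your direct argument for (1)(a), however, has a genuine gap. You write that $s_2/s_1$ is a regular function on $W = C - Z({\bf f}_\beta(T^0))$, note that $W$ is quasi-affine (correct), and conclude that the ratio is constant because $W$ is the complement of a divisor in a proper variety. This does not follow: $\AA^1 = \PP^1 - \{\infty\}$ is quasi-affine and the complement of a divisor in a proper curve, yet has a one-parameter family of global functions. Quasi-affineness gives you \emph{many} functions, not few. There is no mechanism here forcing $\dim H^0(C,\omega^{f_\beta(T)}) = 1$.

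The paper's route is the reverse of yours: it proves (1)(b) first and \emph{deduces} (1)(a) from it. The deduction is clean: one checks directly that $L_{T,f_\beta,\beta}(f_\beta(T)) = -2p^{n_\beta+\delta} < 0$, so $f_\beta(T) \notin \calD_{T,f_\beta}$; by (1)(b) any section of $\omega^{f_\beta(T)}$ on $C$ must then vanish on $C\cap Z({\bf f}_\beta(T))$, hence is divisible by ${\bf f}_\beta(T)|_C$, hence is a scalar multiple of it. This is the ``automatic divisibility'' philosophy of the paper.

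For (1)(b) itself, the paper's induction is not on $|\tilde T - T|$ but on the cardinality of $\{\sigma^i\beta : 0<i<n_\beta\}\cap(\tilde T - T) = \{\sigma^{r_1}\beta,\ldots,\sigma^{r_t}\beta\}$. The base case $t=0$ is exactly the identification $X_T\cap Z({\bf f}_\beta(T)) = X_{T\cup\{\sigma^{n_\beta}\beta\}}$, where Theorem~\ref{thm:main} applies. In the inductive step for the inclusion $\subseteq$, the paper applies Lemma~\ref{lemma: intsum} on $Y = X_T\cap Z({\bf f}_\beta(T))$ with the two sections ${\bf h}_{\sigma^{r_t}\beta}(T)$ and ${\bf f}_{\sigma^{n_\beta}\beta}(T)$, and here a subtlety arises that your sketch misses: one needs to identify $Y\cap Z({\bf h}_{\sigma^{r_t}\beta}(T))$ with $X_{T'}\cap Z({\bf f}_\beta(T'))$ for $T' = T\cup\{\sigma^{r_t}\beta\}$. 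A priori these differ, because ${\bf f}_\beta(T)|_{X_{T'}}$ need not equal ${\bf f}_\beta(T')$. The paper closes this by invoking (1)(a) \emph{for $T'$}, which holds by the induction hypothesis on (1)(b) for $T'$: both are nonzero sections of $\omega^{f_\beta(T')}$ on each component of $X_{T'}$, so they agree up to scalar. This interlocking of (1)(a) and (1)(b) through the induction is the key structural point, and it is absent from your outline.
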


\begin{proof} First, we show that for $\beta \in \BB(T)$,  (1)(b)  implies (1)(a). Let $C$ be a connected component of $X_T$, and ${\bf f} \in H^0(C,\omega^{f_\beta(T)})$. Let $\tilde{\bf f}$ denote the extension of ${\bf f}$ to $X_T$ by setting it zero on the rest of the connected components. Since $\beta \in \BB(T)$,  we can easily see that there is $\delta \geq 0$ such that $L_{T,f_\beta,\beta}(f_\beta(T))=-2p^{n_\beta+\delta}<0$, which implies that $f_\beta(T) \not \in \calD_{T,f_\beta}$. By  (1)(b), it follows that $f_\beta(T) \not \in \calC(X_T\cap Z({\bf f}_\beta(T)))$, and so the restriction of $\tilde{\bf f}$ to $X_T \cap Z({\bf f}_\beta(T))$ is zero. This means $\bf f$ is divisible by the restriction of ${\bf f}_\beta(T)$ to $C$ (which is nonzero by the discussion at the end of \S \ref{section: main}), from which the claim follows. 

Next, we prove (1)(b). We begin by show that  $\calD_{T,f_\beta} = \calC(X_T\cap Z({\bf f}_\beta(T)),\omega^\bullet)$, for all $\beta \in \BB(T)$, if we have   $\{\sigma^i\beta: 0 < i <n_\beta\} \cap (\tilde{T}-T)=\emptyset$. In this case, Proposition \ref{prop: sections} tells us that  ${\bf f}_\beta(T)={\bf h}_{\sigma^{n_\beta}\beta}(T)\displaystyle\prod_{0 < i < n_\beta} {\bf b}_{\sigma^{i}\beta}(T)^{(-p)^{i}}$, since, by assumption,  $\{\sigma^i\beta: 0 \leq i < n_\beta\}\subset T$, and $T^0=T$ (in the notation of the proposition). It follows that $X_T \cap Z({\bf f}_\beta(T))=X_T \cap Z({\bf h}_{\sigma^{n_\beta}\beta}(T))=X_{T \cup \{\sigma^{n_\beta}\beta\}}$, from which the claim follows by Proposition \ref{prop:explicit} and Theorem \ref{thm:main}, noting that in this case, $L_{T,f_\beta,\tau}=L_{T\cup \{\sigma^{n_\beta}\beta\},\tau}$, for all $\tau \in \BB-(T\cup\{\sigma^{n_\beta}\beta\})$.  

We now consider a general $T$, and prove  $\calD_{T,f_\beta} \subset \calC(X_T\cap Z({\bf f}_\beta(T)),\omega^\bullet)$, for all $\beta \in \BB(T)$, using an  induction on the cardinality of $\{\sigma^i\beta: 0 < i <n_\beta\} \cap (\tilde{T}-T)$. The base case was proved above.  Now assume $\{\sigma^i\beta: 0 < i < n_\beta\} \cap (\tilde{T}-T)=\{\sigma^{r_i}\beta: 1 \leq i \leq t\}$, where $0 < r_1 <...<r_t<n_\beta$, and $t\geq 1$.   Let $\kappa \in \calD_{T,f_\beta}$. By discussions in \S \ref{section: optimal} (See Equation \ref{equation: L-beta}), for $r_t<i<n_\beta$, we can find $\lambda_{\sigma^i\beta} \in \QQ$ such that 
\[
\kappa':=\kappa + \sum_{r_t <i < n_\beta} \lambda_{\sigma^i\beta} b_{\sigma^i\beta} -p^{r_t-n_\beta}L_{T,\sigma^{r_t}\beta}(\kappa)  f_{\sigma^{r_t}\beta}(T),
\]
satisfies $k'_{\tau}=0$, if $\tau=\sigma^i\beta$ for $r_t \leq i < n_\beta$, and $k'_{\sigma^{n_\beta}\beta}=k_{\sigma^{n_\beta}\beta}+p^{r_t-n_\beta}L_{T,\sigma^{r_t}\beta}(\kappa)$.

Since $\kappa \in \calD_{T,f_\beta}$, we have  $L_{T,\sigma^{r_t}\beta}(\kappa)=L_{T,f_\beta,\sigma^{r_t}\beta}(\kappa)\geq 0$.  Lemma \ref{lemma: survive} now implies that $\kappa \in \calC(X_T\cap Z({\bf f}_\beta(T)),\omega^\bullet)$ follows from  $\kappa' \in \calC(X_T\cap Z({\bf f}_\beta(T)),\omega^\bullet)$.   We let $T'=T \cup \{\sigma^{r_t}\beta\}$, and note that $f_\beta(T')=f_\beta(T)$, meaning we have   $0 \neq {\bf f}_\beta(T') \in H^0(X_{T'},\omega^{f_\beta(T)})$. Furthermore, by the second statement in Proposition \ref{prop: sections}, we have $X_T \cap Z(\pi_{T,T'}^*{\bf f}_\beta(T'))=X_T \cap Z({\bf f}_\beta(T))$. Therefore,  $\pi_{T,T'}: X_T \rightarrow X_{T'}$ (defined in \S \ref{section: morphisms}) restricts to give a $\PP^1$-bundle morphism
\[
\pi_{T,T'}: X_T \cap Z({\bf f}_\beta(T)) \rightarrow X_{T'} \cap Z({\bf f}_\beta(T')).
\]
Hence, Lemma \ref{lemma: pullback} implies that
\[
\pi^*_{T,T'}: H^0(X_{T'}\cap Z({\bf f}_\beta(T')), \omega^{\kappa'}) \rightarrow H^0(X_T\cap Z({\bf f}_\beta(T)) , \omega^{\kappa'})
\]
is an isomorphism (noting that $\mu_{\sigma^{r_t}\beta}=n_\beta-r_t$). Consequently,   it is enough to prove that $\kappa' \in \calD_{T',f_\beta} \subseteq \calC(X_{T'}\cap Z({\bf f}_\beta(T')),\omega^\bullet)$, where the inclusion holds  by the induction hypothesis. This follows by checking that for all $\tau \in \BB-(T \cup \{\sigma^{r_t}\beta,\sigma^{n_\beta}\beta\})$, we have $L_{T',f_\beta,\tau}(\kappa')=L_{T,f_\beta,\tau}(\kappa)$ (by considering the cases $t=1$ and $t>1$ separately).

To prove the reverse inclusion, i.e.,  $\calC(X_T\cap Z({\bf f}_\beta(T)),\omega^\bullet)\subset \calD_{T,f_\beta}$, for $\beta \in \BB(T)$, we again use an induction on the cardinality of $\{\sigma^i\beta: 0 < i <n_\beta\} \cap (\tilde{T}-T)=\{\sigma^{r_i}\beta: 1 \leq i \leq t\}$ (the base case having already been proved above). Let $T'=T \cup \{\sigma^{r_t}\beta\}$, and $T^{''}=T \cup \{\sigma^{r_1}\beta,...,\sigma^{r_t}\beta \}$. Note that $f_\beta(T'')=f_\beta(T)$, and $f_{\sigma^{n_\beta}\beta}(T'')=f_{\sigma^{n_\beta}\beta}(T)$.  Considering $\pi_{T,T''}: X_T \rightarrow X_{T''}$, and using an argument as in the case of $\pi_{T,T'}$ above, we see that we have a $(\PP^1)^t$-bundle map $$\pi_{T,T''}: Y:=X_T \cap Z({\bf f}_\beta(T)) \rightarrow Y'':=X_{T''} \cap Z({\bf f}_\beta(T'')).$$

We want to apply Lemma \ref{lemma: intsum} on $X_T \cap Z({\bf f}_\beta(T))$ with ${\bf h}_{\sigma^{r_t}\beta}(T)$ and $\pi_{T,T''}^*{\bf f}_{\sigma^{n_\beta}\beta}(T'')={\bf f}_{\sigma^{n_\beta}\beta}(T)$. These sections satisfy the conditions required in Lemma \ref{lemma: intsum}, by Lemma \ref{lemma: h intersect f} (and noting that  $\sigma^{n_\beta}\beta \neq \beta$, $\sigma^{r_t}\beta \neq \beta$, $\sigma^{n_\beta}\beta\not\in T$). By Lemma \ref{lemma: intsum}, we have 
$$\calC(Y,\omega^\bullet) \subseteq \big(\calC(Y\cap Z({\bf h}_{\sigma^{r_t}\beta}(T)),\omega^\bullet) \cap \calC(Y \cap Z(\pi_{T,T''}^*{\bf f}_{\sigma^{n_\beta}\beta}(T'')),\omega^\bullet) \big)+\langle h_{\sigma^{r_t}\beta}, f_{\sigma^{n_\beta}\beta}(T) \rangle.$$

 Note that $Y\cap Z({\bf h}_{\sigma^{r_t}\beta}(T))=\big(X_T \cap Z({\bf f}_\beta(T)\big) \cap Z({\bf h}_{\sigma^{r_t}\beta}(T))$ can be thought of as the vanishing locus of ${\bf f}$, the restriction of ${\bf f}_\beta(T)$ to $X_T \cap Z({\bf h}_{\sigma^{r_t}\beta}(T))=X_{T'}$. 
  The section ${\bf f}$ is a  section of $\omega^{f_\beta(T)}=\omega^{f_\beta(T')}$. We prove it is nonzero on every connected component  of $X_{T'}$. Since, by discussions in \S\ref{section: morphisms}, $X_T$ is a $\PP^1$-bundle over $X_{T'}$, the connected components of $X_T$ and $X_{T'}$ are in bijection. By Lemma 4.1 of \cite{DK1}, the connected components of $X_{T'}$ map onto those of $X_T$ under the natural embedding $X_T' \rightarrow X_T$.  Hence, given a connected component $C'$ of $X_{T'}$, there is a connected component $C$ of $X_T$ such that $C'=C \cap X_{T'}=C \cap Z({\bf h}_{\sigma^{r_t}\beta}(T))$. If ${\bf f}$ is zero on $C'$, it follows that ${\bf f}_\beta(T)$ is divisible by ${\bf h}_{\sigma^{r_t}\beta}(T)$ over $C$. It follows that $\lambda:=f_\beta(T)-h_{\sigma^{r_t}\beta}\in \calC(C,\omega^\bullet)$, being the weight of the nonzero section ${\bf f}_\beta(T)/{\bf h}_{\sigma^{r_t}\beta}(T)$ on $C$.  Since  $L_{T,\sigma^{r_t}\beta}(\lambda)=-1<0$,  Proposition \ref{prop:explicit} implies that $\lambda \not\in \calD_T=\calC_T=\calC(C, \omega^{\bullet}).$ (For the last equality, see discussions at the end of \S \ref{section: main}). This is a contradiction. 

 Therefore,  $\bf f$ is nonzero on every connected comopnent of $X_{T'}$, and by (1)(a) for $T'$ (which follows from  (1)(b) for $T'$, by the induction hypothesis) $\bf f$ must be a nonzero scalar multiple of ${\bf f}_\beta(T')$ on each connected component of $X_T'$. We deduce that  $$Y\cap Z({\bf h}_{\sigma^{r_t}\beta}(T))=X_{T'} \cap Z({\bf f}_\beta(T')).$$
  By Proposition \ref{prop: O(1)}, we have 
$$\calC(Y \cap Z(\pi_{T,T'}^*{\bf f}_{\sigma^{n_\beta}\beta}(T'')),\omega^\bullet) \subseteq  \{\kappa: L_{T,\sigma^{r_i}\beta}(\kappa)\geq 0,\ \forall \ 1 \leq i \leq t\},$$  
and, by the induction hypothesis, we have  $$\calC(Y\cap Z({\bf h}_{\sigma^{r_t}\beta}(T)),\omega^\bullet)=\calC(X_{T'} \cap Z({\bf f}_\beta(T')),\omega^\bullet) \subseteq\calD_{T',f_\beta}.$$ Putting all these together, we have 
$$\calC(Y,\omega^\bullet) \subseteq  \big( \calD_{T',f_\beta} \cap   \{\kappa: L_{T,\sigma^{r_i}\beta}(\kappa)\geq 0,\ \forall \ 1 \leq i \leq t\}\big) +\langle h_{\sigma^{r_t}\beta}, f_{\sigma^{n_\beta}\beta}(T) \rangle.$$ 
It can be checked that $h_{\sigma^{r_t}\beta}, f_{\sigma^{n_\beta}\beta}(T) \in \calD_{T,f_\beta}$. Therefore, it is enough to show $$\calD_{T',f_\beta} \cap   \{\kappa: L_{T,\sigma^{r_i}\beta}(\kappa)\geq 0,\ \forall \ 1 \leq i \leq t\}\big)  \subset \calD_{T,f_\beta}.$$
First assume $t>1$, in which case $\epsilon_{T,f_\beta}=\epsilon_{T',f_\beta}$, and from which it  is easy to check that $L_{T,f_\beta,\tau}(\kappa)=L_{T',f_\beta,\tau}(\kappa)\geq 0$ for all $\tau \in \BB-(T\cup\{\sigma^{n_\beta}\beta\})$ such that $\tau \not \in \{\sigma^{r_t}\beta, \sigma^{r_{t-1}}\beta\}$. The result follows since the two remaining conditions for $\kappa\in \calD_{T,f_\beta}$ are also satisfied: $L_{T,f_\beta,\tau}(\kappa)=L_{T,\tau}(\kappa)\geq 0$ for $\tau \in \{\sigma^{r_t}\beta, \sigma^{r_{t-1}}\beta\}$. Now assume $t=1$. In this case, $\epsilon_{T,f_\beta}(\tau)=-\epsilon_{T',f_\beta}(\tau)$ if $\tau \in \{\sigma^{r_1}\beta,...,\sigma^{n_\beta-1}\beta\}$, and $\epsilon_{T,f_\beta}(\tau)=\epsilon_{T',f_\beta}(\tau)$ otherwise. An examination of the defintion shows that for all $\sigma^{r_1}\beta \neq \tau  \in \BB-(T\cup\{\sigma^{n_\beta}\beta\})$, either $L_{T,f_\beta,\tau}=L_{T',f_\beta,\tau}$, or there is $N_\tau \geq 0$, such that  $L_{T,f_\beta,\tau}-L_{T',f_\beta,\tau}=2p^{N_\tau} L_{\sigma^{r_1}\beta}(\kappa)\geq 0$. The last condition for $\kappa\in \calD_{T,f_\beta}$ is $L_{T,f_\beta,\sigma^{r_1}\beta}(\kappa)=L_{T,\sigma^{r_1}\beta}(\kappa) \geq 0$, which also holds by the assumption.

We now prove (2). By assumption, either $\BB_\gerp-\tilde{T}_\gerp=\emptyset$ (and $\beta \in \tilde{T}_\gerp-T_\gerp$), or $\BB_\gerp-\tilde{T}_\gerp=\{\beta\}$. By Lemma \ref{lemma: survive}, it is enough to show that $f_\beta(T) \in \calC(X_T \cap Z({\bf f}_\beta(T)),\omega^\bullet)$.   Let $T'=T \cup (\BB_\gerp-\{\beta\})$, so that, by Proposition \ref{prop: sections}, we have $$\pi_{T,T'}: X_T \cap Z({\bf f}_\beta(T)) \rightarrow X_{T'} \cap Z({\bf f}_\beta(T'))$$
is the restriction of $\pi_{T,T'}:X_T \rightarrow X_{T'}$ to $X_{T'} \cap Z({\bf f}_\beta(T'))$. By Lemma \ref{lemma: pullback}, it is enough to show that $f_\beta(T) \in \calC(X_{T'}\cap Z({\bf f}_\beta(T')),\omega^\bullet)$. But, by Proposition \ref{prop: sections}, we have   $X_{T'}\cap Z({\bf f}_\beta(T'))=X_{T'}\cap Z({\bf h}_\beta(T'))=X_{T'\cup\{\beta\}}$, and $\Sigma_{\tau \in \BB_\gerp} \QQ e_\tau \subseteq \calC_{T'\cup\{\beta\}}$, as $\BB_\gerp \subseteq T'\cup\{\beta\}$.
 \end{proof}
 
 \begin{corollary}\label{cor: divisibility}Let $T \subseteq \BB$. Assume ${\bf f}\in H^0(X_T,\omega^\kappa)$, and  $L_{T,f_\beta,\tau}(\kappa)<0$, for some $\beta \in \BB(T)$, $\tau \in \BB-(T \cup\{\sigma^{n_\beta}\beta\})$. Then ${\bf f}$ is divisible by ${\bf f}_\beta(T)$.
\end{corollary}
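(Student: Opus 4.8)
The plan is to deduce this divisibility statement from Theorem~\ref{thm: general cones}(1) by exactly the kind of argument already used in the proof of the implication ``(1)(b) $\Rightarrow$ (1)(a)'' there. The hypothesis puts us in the case $\beta \in \BB(T)$, so both parts of Theorem~\ref{thm: general cones}(1) are available to us.

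First I would let $C$ be a connected component of $X_T$, and restrict ${\bf f}$ to $C$. If this restriction is zero, there is nothing to prove on $C$ (the zero section is divisible by anything), so I may assume ${\bf f}|_C \neq 0$; then $\kappa \in \calC(C,\omega^\bullet) = \calC_T$ by the discussion at the end of \S\ref{section: main}. Now consider the extension $\tilde{\bf f}$ of ${\bf f}|_C$ by zero to all of $X_T$ (as in the proof of Theorem~\ref{thm: general cones}), and look at its restriction to $X_T \cap Z({\bf f}_\beta(T))$. By Theorem~\ref{thm: general cones}(1)(b), we have $\calC(X_T \cap Z({\bf f}_\beta(T)),\omega^\bullet) = \calD_{T,f_\beta}$. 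The hypothesis $L_{T,f_\beta,\tau}(\kappa) < 0$ for some $\tau \in \BB-(T\cup\{\sigma^{n_\beta}\beta\})$ says precisely that $\kappa \notin \calD_{T,f_\beta}$, hence $\kappa \notin \calC(X_T \cap Z({\bf f}_\beta(T)),\omega^\bullet)$. Therefore the restriction of $\tilde{\bf f}$ — equivalently, of ${\bf f}|_C$ — to $C \cap Z({\bf f}_\beta(T))$ must vanish.

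Next I would convert this vanishing into divisibility. Since ${\bf f}_\beta(T)$ cuts out a (reduced, by Lemma~\ref{lemma: intersect}) divisor meeting $C$, and ${\bf f}|_C$ vanishes along that divisor, ${\bf f}|_C$ is divisible by ${\bf f}_\beta(T)|_C$ in $H^0(C,\omega^\bullet)$; concretely, ${\bf f}|_C/{\bf f}_\beta(T)|_C$ is a regular section of $\omega^{\kappa - f_\beta(T)}$ on $C$. Doing this on every connected component and recombining gives that ${\bf f}$ is divisible by ${\bf f}_\beta(T)$ on all of $X_T$, as required. (One should note here that ${\bf f}_\beta(T)$ is nowhere a zero divisor since, by Proposition~\ref{prop:strata}, $X_{T,\{\beta\}}$ is equidimensional of codimension one in $X_T$, so the quotient is well defined globally once it is defined componentwise.)

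The only point requiring a little care — and the step I expect to be the mild obstacle — is justifying that vanishing of ${\bf f}$ along the reduced divisor $Z({\bf f}_\beta(T))$ inside $X_T$ really yields an honest global quotient section of $\omega^{\kappa-f_\beta(T)}$: this uses that $X_T$ is smooth (hence the local rings are UFDs), that ${\bf f}_\beta(T)$ generates the ideal of its (reduced, smooth by Lemma~\ref{lemma: intersect}) zero divisor locally, and that this divisor meets every component, so no component of $X_T$ is entirely contained in it. All of this is already packaged in the smoothness and equidimensionality statements quoted from \S\ref{section: morphisms} and Lemma~\ref{lemma: intersect}, so the argument is essentially a citation-and-assembly exercise rather than a new computation. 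This is, in effect, the ``automatic divisibility'' phenomenon of \cite{DK1,DK2} transported to the stratum $X_T$.
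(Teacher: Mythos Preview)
Your proposal is correct and follows essentially the same approach as the paper: use Theorem~\ref{thm: general cones}(1)(b) to conclude $\kappa \notin \calC(X_T \cap Z({\bf f}_\beta(T)),\omega^\bullet)$, deduce that ${\bf f}$ vanishes on $Z({\bf f}_\beta(T))$, and infer divisibility. The paper's proof is just two sentences, working directly with ${\bf f}$ on all of $X_T$; your detour through connected components and the extension-by-zero trick $\tilde{\bf f}$ (borrowed from the (1)(b)$\Rightarrow$(1)(a) argument) is unnecessary here, since $\kappa \notin \calC(X_T \cap Z({\bf f}_\beta(T)),\omega^\bullet)$ already forces $H^0(X_T \cap Z({\bf f}_\beta(T)),\omega^\kappa)=0$, so the restriction of ${\bf f}$ itself vanishes globally on the divisor without any component-wise bookkeeping.
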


\begin{proof} By Theorem \ref{thm: general cones}, the assumption implies that $\kappa \not \in \calC(X_T \cap Z({\bf f}_\beta(T)), \omega^\bullet)$. Hence, the restriction of $\bf f$ to $X_T \cap Z({\bf f}_\beta(T))$ is zero, which implies that ${\bf f}$ is divisible by ${\bf f}_\beta(T)$.
\end{proof}

\begin{dfn} Let $T \subseteq \BB$. The minimal cone of $X_T$ is defined to be 
$$\calC^{\rm min}_T=  i_T( \calC_T \cap (\bigcap_{\beta \in \BB(T)}  \calD_{T,f_\beta}))\subseteq \calC_T^{T-{\rm red}} \subseteq \QQ^{\BB-T},$$
where $i_T:\QQ^{\BB} \rightarrow \QQ^{\BB-T}$ is defined in \S \ref{section: optimal}). Note that for all $\beta \in T$, we have $${\rm Ker} (i_T)=\langle \pm b_\beta: \beta \in T \rangle \subseteq  \calC_T \cap (\bigcap_{\beta \in \BB(T)}  \calD_{T,f_\beta}).$$  
It follows that $\calC_T \cap (\bigcap_{\beta \in \BB(T)}  \calD_{T,f_\beta})= i_T^{-1}(\calC^{\rm min}_T)=j_T(C_T^{\rm min}) + \langle \pm b_\beta: \beta \in T \rangle$.

\end{dfn}

\begin{prop} \label{prop: diagonal} Let $T \subseteq \BB$ be such that $\tilde{T}=T$. Then $$\calC^{\rm min}_T=\{\sum_\beta \ell_\beta e_\beta \in \QQ^{\BB-T}: p^{n_\beta} \ell_{\sigma^{n_\beta}\beta} \geq \ell_\beta, \forall \beta \in \BB-T\}.$$
 \end{prop}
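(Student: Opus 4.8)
The strategy is to unwind the definition of $\calC^{\rm min}_T$ into an explicit system of linear inequalities and to match it with the cone on the right. Since $\calC_T$ and each $\calD_{T,f_\beta}$ ($\beta\in\BB(T)$) contain $\Ker(i_T)=\langle\pm b_\beta:\beta\in T\rangle$, the surjection $i_T$ commutes with finite intersections of these cones, so
\[
\calC^{\rm min}_T=\calC_T^{T\!-{\rm red}}\cap\bigcap_{\beta\in\BB(T)}i_T(\calD_{T,f_\beta}).
\]
I would use throughout that $\tilde T=T$ forces $\mu_\beta=n_\beta$, that every $n_\beta$ is odd (as in the proof of Lemma \ref{lemma: no boxes}), that $\beta\mapsto\sigma^{n_\beta}\beta$ is a single cyclic permutation of each $\BB_\gerp-T_\gerp$, and that $\epsilon_T(\sigma^i\beta)=(-1)^i$ for $\beta\in\BB-T$ and $0\le i<n_\beta$. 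By Theorem \ref{thm:main} and Proposition \ref{prop:explicit}, $\calC_T^{T\!-{\rm red}}$ is the simplicial cone spanned by the vectors $v_\beta:=i_T(f_\beta(T))=-e_{\sigma^{n_\beta}\beta}+p^{n_\beta}e_\beta$, and these form a basis of $\QQ^{\BB-T}$ (the transition matrix is $p^{n_\bullet}\cdot{\rm Id}$ minus a permutation matrix); writing $\kappa=\sum_\beta a_\beta v_\beta$ one gets $\calC_T^{T\!-{\rm red}}=\{a_\beta\ge 0\ \forall\beta\}$, $\ell_\beta=p^{n_\beta}a_\beta-a_{\beta^-}$ with $\beta^-$ the cyclic predecessor, and the descent $\overline L_{T,\beta}$ through $i_T$ of the functional $L_{T,\beta}$ of Proposition \ref{prop:explicit} equals $(p^{|\BB_\gerp|}-1)a_\beta$.

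Next I would compute $i_T(\calD_{T,f_\beta})$ for $\beta\in\BB(T)$. Since $\tilde T=T$, Proposition \ref{prop: sections} gives ${\bf f}_\beta(T)={\bf h}_{\sigma^{n_\beta}\beta}(T)\prod_{0<i<n_\beta}{\bf b}_{\sigma^i\beta}(T)^{(-p)^i}$, so $X_T\cap Z({\bf f}_\beta(T))=X_{T\cup\{\sigma^{n_\beta}\beta\}}$ and, by Theorem \ref{thm: general cones}(1)(b), $\calD_{T,f_\beta}=\{L_{T,f_\beta,\tau}\ge 0:\tau\in\BB-(T\cup\{\sigma^{n_\beta}\beta\})\}$. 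A short computation with the sign function $\epsilon_{T,f_\beta}$ then shows that $L_{T,f_\beta,\beta}$ descends to $M_\beta:=p^{n_\beta}\ell_{\sigma^{n_\beta}\beta}-\ell_\beta$, while for the remaining $\tau$ (and $L_{T,f_\beta,\tau}=L_{T,\tau}$ automatically when $\tau$ lies outside the prime of $\beta$) the difference $L_{T,f_\beta,\tau}-L_{T,\tau}$ is supported on the block $\{\beta,\sigma\beta,\dots,\sigma^{n_\beta-1}\beta\}$, where $\epsilon_{T,f_\beta}-\epsilon_T=-2\epsilon_T$; the sign pattern $\epsilon_T(\sigma^i\beta)=(-1)^i$ then collapses this difference to $-2p^{\,c}\ell_\beta$ for a cyclic‑distance exponent $c=c_{\beta,\tau}\ge 0$. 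Hence $i_T(\calD_{T,f_\beta})$ is cut out by $M_\beta\ge 0$, by $\overline L_{T,\tau}\ge 2p^{\,c_{\beta,\tau}}\ell_\beta$ for $\tau$ in the prime of $\beta$, and by $\overline L_{T,\tau}\ge 0$ otherwise.

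Finally I would check that intersecting all of this equals the claimed cone $\calN:=\{\kappa:M_\beta(\kappa)\ge 0\ \forall\beta\in\BB-T\}$. The inclusion $\calC^{\rm min}_T\subseteq\calN$ is immediate: for $\beta\in\BB(T)$ the inequality $M_\beta\ge 0$ is one of the defining ones, and for the remaining $\beta$ (the ``singletons'', with $\sigma^{n_\beta}\beta=\beta$) the inequality $M_\beta\ge 0$ is exactly $\overline L_{T,\beta}\ge 0$, valid on $\calC_T^{T\!-{\rm red}}$. For the reverse inclusion, take $\kappa\in\calN$: chaining $\ell_\beta\le p^{n_\beta}\ell_{\sigma^{n_\beta}\beta}$ once around each cyclic orbit gives $(p^{|\BB_\gerp|}-1)\ell_\beta\ge 0$, so all $\ell_\beta\ge 0$; solving the recursion $\ell_\beta=p^{n_\beta}a_\beta-a_{\beta^-}$ around the orbit then expresses $(p^{|\BB_\gerp|}-1)a_\beta$ as a non‑negative ($\ZZ[p]$‑) combination of the $\ell$'s, so all $a_\beta\ge 0$; thus $\kappa\in\calC_T^{T\!-{\rm red}}$ and all $M_\beta\ge 0$. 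For the last family of inequalities, in that expansion of $\overline L_{T,\tau}=(p^{|\BB_\gerp|}-1)a_\tau$ the coefficient of $\ell_\beta$ is exactly $p^{\,c_{\beta,\tau}}$ and that of $\ell_{\sigma^{n_\beta}\beta}$ is $p^{\,c_{\beta,\tau}+n_\beta}$; discarding the other (non‑negative) terms and using $p^{n_\beta}\ell_{\sigma^{n_\beta}\beta}\ge\ell_\beta$ yields $\overline L_{T,\tau}\ge 2p^{\,c_{\beta,\tau}}\ell_\beta$, as required.

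I expect the main obstacle to be the middle step: correctly identifying the descended functionals $L_{T,f_\beta,\tau}$, in particular pinning down the exponent $c_{\beta,\tau}$ and the signs contributed by $\epsilon_{T,f_\beta}$ and then verifying that the coefficient of $\ell_\beta$ inside $\overline L_{T,\tau}$ matches that exponent (this is what makes the ``factor of $2$'' come out), which is precisely where the cyclic structure of $\sigma^{n_\bullet}$ and the oddness of the $n_\beta$ are used. The remaining parts are formal manipulations of simplicial cones and their dual bases.
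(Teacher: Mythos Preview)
Your proof is correct and follows essentially the same route as the paper's.  Both arguments reduce to computing the descended functionals in terms of the $\ell_\beta$'s via the block decomposition and the sign pattern $\epsilon_T(\sigma^i\beta)=(-1)^i$, obtaining $L_{T,f_\beta,\beta}\mapsto p^{n_\beta}\ell_{\sigma^{n_\beta}\beta}-\ell_\beta$ and $L_{T,f_\beta,\tau}\mapsto L_{T,\tau}-2p^{c}\ell_\beta$ (equivalently, the paper writes $p^{n_1(\tau)}L_{T,f_\beta,\tau}(\kappa)=\sum_{i\in I}p^{n_i(\tau)}\ell_{\bar\sigma^i\tau}+p^{n_j(\tau)}M_\beta$), then cycle-chaining to get $\ell_\beta\ge 0$ and reading off the off-diagonal inequalities from the expansion $p^{n_1(\tau)}L_{T,\tau}(\kappa)=\sum_i p^{n_i(\tau)}\ell_{\bar\sigma^i\tau}$.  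Your auxiliary coordinates $a_\beta$ with $\overline L_{T,\tau}=(p^{|\BB_\gerp|}-1)a_\tau$ are not needed (the paper works directly with the $\ell$-expansion of $L_{T,\tau}$), but they are correct and do no harm.
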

 
 \begin{proof} The projection $i_T: \QQ^\BB \rightarrow \QQ^{\BB-\TT}$ is given by $i_T(\sum_{\beta\in\BB} k_\beta e_\beta)=\sum_{\beta \in \BB-T} \ell_\beta e_\beta$, where, for $\beta \in \BB-T$, we have $\ell_\beta=\sum_{i=0}^{\mu_\beta-1} (-p)^i k_{\sigma^i\beta}$. Note that  for $\beta \in \BB-\tilde{T}=\BB-T$,  we have $$L_{T,f_\beta,\beta}(\sum_{\beta\in\BB} k_\beta e_\beta )=p^{n_\beta} \ell_{\sigma^{n_\beta}\beta} - \ell_\beta.$$ This proves one inclusion.  For the reverse inclusion, we need to show that if $\kappa=\sum_{\beta\in\BB} k_\beta e_\beta$ is such that $L_{T,f_\beta,\beta}(\sum_{\beta\in\BB} k_\beta e_\beta )=p^{n_\beta} \ell_{\sigma^{n_\beta}\beta} - \ell_\beta \geq 0$, for all $\beta \in \BB-T$, then $\kappa \in \calC_T \cap (\bigcap_{\beta \in \BB-T}  \calD_{T,f_\beta})$. Note that these conditions imply $\ell_\beta \geq 0$ for all $\beta \in \BB-T$. For $\gerp|p$, and  $\tau \in \BB_\gerp-T_\gerp$, set $\bar{\sigma}\tau:=\sigma^{n_\tau}\tau\in \BB_\gerp-T_\gerp$, and for $1 \leq i \leq |\BB_\gerp-T_\gerp|$, let  $0 < n_i(\tau) \leq |\BB_\gerp|$ be such that $\bar{\sigma}^i\tau=\sigma^{n_i(\tau)}\tau$. Then,   $$p^{n_1(\tau)}L_{T,\tau}(\kappa)=\sum_{i=1}^{|\BB_\gerp-T_\gerp|} p^{n_i(\tau)}\ell_{\bar{\sigma}^i\tau} \geq 0.$$
 It remains to show that $L_{T,f_\beta, \tau}(\kappa)\geq0$, for all $\beta \in \BB(T)=\BB-T$ and $\beta \neq \tau \in \BB-(T\cup\{\sigma^{n_\beta}\beta\})$. If $\beta \in \BB_\gerp$ and $\tau\not\in \BB_\gerp$, we have  $L_{T,f_\beta, \tau}(\kappa)=L_{T,\tau}(\kappa) \geq 0$ from above. Hence, we can assume $\tau \in \BB_\gerp$, and pick $1\leq j \leq |B_\gerp-T_\gerp|$ such that $\bar{\sigma}^j\tau=\beta$. Let $I=\{1,...,|\BB_\gerp-T_\gerp|\}-\{j, j+1\}$. Then 
 $$ p^{n_1(\tau)}L_{T,f_\beta,\tau}(\kappa)=\sum_{i\in I} p^{n_i(\tau)}\ell_{\bar{\sigma}^i\tau} + p^{n_j(\tau)}(p^{n_\beta} \ell_{\sigma^{n_\beta}\beta} - \ell_\beta) \geq 0.$$
 
 \end{proof}
 
 \begin{remark} For a general $T$, set $\calC_T^{{\rm min},0}=\iota_T(\{\kappa \in \calC_T: L_{T,\beta,f_\beta}(\kappa) \geq 0, \forall \beta \in \BB(T)\})$. It is clear that $\calC_T^{{\rm min}} \subseteq \calC_T^{{\rm min},0}$. Proposition \ref{prop: diagonal} shows that $\calC_T^{{\rm min}} =\calC_T^{{\rm min},0}$ if $\tilde{T}=T$.  We expect this equality to be true for all $T\subseteq \BB$.
 \end{remark}

\section{Minimal weights}

 Let $T \subseteq \BB$ and $0 \neq {\bf f} \in H^0(X_T,\omega^\kappa)$. For each $\beta \not\in T$, we let $a_\beta$ be the maximal power of ${\bf f}_\beta(T)$ that divides $\bf f$ (which exists by Lemma 7.2 of \cite{DK2}).   
It follows from part (2) of Proposition \ref{prop:strata} that for any $\beta,\beta' \not\in T$, the strata $X'_T \cap Z({\bf f}_\beta(T))$ and $X'_T \cap Z({\bf f}_{\beta'}(T))$ have no common irreducible components.  This implies that ${\bf f}$ is divisible by $\prod_{\beta \in \BB(T)} {\bf f}_\beta(T)^{a_\beta}$. Hence, we can write 
$${\bf f} ={\bf f}_0\prod_{\beta \not\in T} {\bf f}_\beta(T)^{a_\beta}$$ 
for some ${\bf f}_0 \in H^0(X_T,\omega^{\kappa_0})$,
where $\kappa_0 = \kappa - \sum_{\beta\not\in T} a_\beta f_\beta(T)$.
It follows from the definition of the $a_\beta$ that $\bf f_0$ is not divisible by ${\bf f}_\beta(T)$ for any $\beta \not\in T$, and that $i_T(\kappa_0) \in \calC_T^{T-\rm red}$, where $i_T: \QQ^{\BB} \to \QQ^{\BB - T}$ is defined in (\ref{equation: L-beta}).

\begin{dfn}  For $0 \neq {\bf f} \in H^0(X_T,\omega^\kappa)$, we define the {\em minimal weight} of ${\bf f}$ to be
\[
\Phi({\bf f})=i_T(\kappa_0) \in \ZZ^{\BB- T}.
\]

\end{dfn}

\begin{prop} \label{prop: min weight} Let $T \subseteq \BB$, and $0 \neq {\bf f} \in H^0(X_T,\omega^\kappa)$. Then we have $\Phi({\bf f}) \in \calC^{\rm min}_T$.
\end{prop}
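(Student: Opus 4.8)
The plan is to show that $\Phi({\bf f}) = i_T(\kappa_0)$ lies in $\calC^{\rm min}_T = i_T\big(\calC_T \cap \bigcap_{\beta \in \BB(T)} \calD_{T,f_\beta}\big)$. Since $i_T(\kappa_0) \in \calC_T^{T-\rm red}$ already (as noted just before the definition), and since $\calC_T = j_T(\calC_T^{T-\rm red}) + \langle \pm b_\beta : \beta \in T\rangle$ with $i_T \circ j_T = \mathrm{id}$, it suffices to show that the representative $j_T(i_T(\kappa_0))$ — equivalently, that $\kappa_0$ itself up to the kernel of $i_T$ — satisfies $L_{T,f_\beta,\tau}(\kappa_0) \geq 0$ for every $\beta \in \BB(T)$ and every $\tau \in \BB - (T \cup \{\sigma^{n_\beta}\beta\})$. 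Note the functionals $L_{T,f_\beta,\tau}$ factor through $i_T$ (they only involve $k_\sigma^i\tau$ with appropriate signs, matching the $i_T$-reduction), so evaluating on $\kappa_0$ or on $j_T(i_T(\kappa_0))$ gives the same answer; this reduces the problem to a positivity statement about $\kappa_0$.

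The key step is a \emph{contrapositive} argument using Corollary \ref{cor: divisibility}: if $L_{T,f_\beta,\tau}(\kappa_0) < 0$ for some $\beta \in \BB(T)$ and some admissible $\tau$, then ${\bf f}_0$ is divisible by ${\bf f}_\beta(T)$. But this contradicts the construction of $\kappa_0$ and ${\bf f}_0$, since $a_\beta$ was chosen as the \emph{maximal} power of ${\bf f}_\beta(T)$ dividing ${\bf f}$, so ${\bf f}_0$ is by definition not divisible by any ${\bf f}_\beta(T)$, $\beta \notin T$. (One should also handle the case $\beta \notin \BB(T)$, but $\calC^{\rm min}_T$ only involves $\calD_{T,f_\beta}$ for $\beta \in \BB(T)$, so no condition is imposed there — the $\beta \notin \BB(T)$ strata contribute nothing to the intersection by definition, consistent with part (2) of Theorem \ref{thm: general cones}, which says $\calC_T \subseteq \calC(X_T \cap Z({\bf f}_\beta(T)),\omega^\bullet)$ in that case.) Therefore $L_{T,f_\beta,\tau}(\kappa_0) \geq 0$ for all the relevant $\beta,\tau$, i.e., $\kappa_0 \in \bigcap_{\beta\in\BB(T)} \calD_{T,f_\beta}$.

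Finally, combining $\kappa_0 \in \calC_T$ (which follows because ${\bf f}_0$ is a nonzero section of $\omega^{\kappa_0}$ on $X_T$, using Theorem \ref{thm:main}, so $\kappa_0 \in C_T \subseteq \calC_T$) with $\kappa_0 \in \bigcap_{\beta \in \BB(T)} \calD_{T,f_\beta}$, we get $\kappa_0 \in \calC_T \cap \bigcap_{\beta\in\BB(T)} \calD_{T,f_\beta}$, whence $\Phi({\bf f}) = i_T(\kappa_0) \in \calC^{\rm min}_T$ by definition of the minimal cone.

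I expect the main obstacle to be a bookkeeping one rather than a conceptual one: verifying carefully that the functionals $L_{T,f_\beta,\tau}$ are well-defined on the reduced space (i.e., descend through $i_T$) so that the positivity statement about $\kappa_0$ is genuinely equivalent to membership of $i_T(\kappa_0)$ in $\calC^{\rm min}_T$, and double-checking that Corollary \ref{cor: divisibility} applies with exactly the right index set $\BB - (T \cup \{\sigma^{n_\beta}\beta\})$ that appears in the definition of $\calD_{T,f_\beta}$. Once those compatibilities are in hand, the argument is a direct application of the maximality in the definition of the $a_\beta$ together with Corollary \ref{cor: divisibility}.
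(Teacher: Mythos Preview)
Your proposal is correct and follows essentially the same approach as the paper: both arguments reduce to showing $\kappa_0 \in \calC_T \cap \bigcap_{\beta \in \BB(T)} \calD_{T,f_\beta}$, then invoke Corollary~\ref{cor: divisibility} to derive a contradiction with the maximality of the $a_\beta$ if some $L_{T,f_\beta,\tau}(\kappa_0) < 0$. The paper's version is slightly more streamlined in that it uses the identity $i_T^{-1}(\calC^{\rm min}_T) = \calC_T \cap \bigcap_{\beta \in \BB(T)} \calD_{T,f_\beta}$ (recorded right after the definition of $\calC^{\rm min}_T$) to bypass your discussion of the functionals factoring through $i_T$, but the substance is identical.
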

 \begin{proof} Write ${\bf f}={\bf f}_0\prod_{\beta \not\in T} {\bf f}_\beta(T)^{a_\beta}$, where  $a_\beta$ is as above, ${\bf f}_0 \in H^0(X'_T,\delta^{\xi_0}\omega^{\kappa_0})$, and $i_T(\kappa_0)=\Phi({\bf f}) \in \calC_T^{\rm red}$.  If $\Phi({\bf f})\not \in \calC^{\rm min}_T$, then $\kappa_0 \not\in  i_T^{-1}(\calC^{\rm min}_T)=\calC_T \cap (\bigcap_{\beta \in \BB(T)}  \calD_{T,f_\beta})$. Since $\kappa_0 \in \calC_T$, it follows that $\kappa_0 \not\in \calD_{T,f_\beta}$, for some $\beta \in \BB(T)$. Hence,  there exist $\beta \in \BB(T)$, and $\tau \in \BB-(T \cup\{\sigma^{n_\beta}\beta\})$, such that $L_{T,f_\beta,\tau}(\kappa_0)<0$. By Corollary \ref{cor: divisibility}, it follows that ${\bf f}_0$ is divisible by ${\bf f}_\beta(T)$. This contradicts the definition of ${\bf f}_0$.
\end{proof}

 \section{The $\Res_{F/\QQ}\GL_2$ setting} \label{section: GL2}
In this section, we explain how to deduce results analogous to those of preceding sections, but in the context of Shimura varieties for the group $\Res_{F/\QQ}\GL_2$.  In so doing, we also account for the (prime-to-$p$) Hecke action.

\subsection{Hilbert modular varieties and automorphic line bundles} First consider the scheme
$$ \widetilde{X}_K := \coprod_\epsilon X_K^\epsilon, $$
where the disjoint union runs over all $\epsilon \in (\AA_F^{\infty,p})^\times/\det(K^p)(\widehat{\ZZ}^{(p)})^\times$(and $K$, as always, is a sufficiently small open compact subgroup of $\GL_2(\AA_F^\infty)$ with $K_p = \GL_2(\CO_{F,p})$).  Let $\CO_{F,(p)}$ denote the localization of $\CO_F$ at the prime $p$ of $\ZZ$, and $\CO_{F,(p),+}^\times$ the subgroup of $\CO_{F,(p)}^\times$ consisting of totally positive elements.  We have a natural action of $\CO_{F,(p),+}^\times$ on $\widetilde{X}_K$ defined by $\nu\cdot(A/S,\iota,\lambda,\alpha) = (A/S,\iota,\nu\lambda,\alpha)$ for $\nu \in \CO_{F,(p),+}^\times$, sending $X_K^\epsilon(S)$ to $X_K^{\nu\epsilon}(S)$. Note that the action factors through $\CO_{F,(p),+}^\times/(\CO_F^\times \cap K)^2$.  Moreover the resulting action is free for $K$ sufficiently small (see~\cite[Lemma~2.4.1]{DS}), and the quotient $\widetilde{X}_K/\CO_{F,(p),+}^\times$ is smooth of relative dimension $d$ and finite type over $\ZZ_{(p)}$.  Indeed it admits a finite \'etale cover by the disjoint union of the $X_K^\epsilon$, where $\epsilon$ runs over representatives of the finite quotient
$$(\AA_F^{\infty,p})^\times/\CO_{F,(p),+}^\times\det(K^p)(\widehat{\ZZ}^{(p)})^\times = \AA_F^\times/F^\times F_{\infty,+}^\times\det(K)\AA_\QQ^\times.$$
  We let 
$$ X_K' := (\widetilde{X}_K/\CO_{F,(p),+}^\times) \otimes\FF.$$

We also have a natural action of $\CO_{F,(p),+}^\times$ on the line bundle $\omega_\beta$ over $\widetilde{X}_K \otimes \FF$ (induced by the identity on $A$).  If $K$ is sufficiently small (more precisely $p$-neat in the sense of \cite[Def.~3.2.3]{DS}), then this action defines descent data to a line bundle on $X_K'$, which we also denote $\omega_\beta$.  Similarly we write 
$\omega^\kappa$ for the line bundle $\otimes_{\beta\in\BB}\omega_\beta^{k_\beta}$ for $\kappa = \sum k_\beta e_\beta \in \ZZ^\BB$. It is immediate from their definition that the partial Hasse invariants ${\bf h}_\beta$ (on the union of the $X_K^\epsilon \otimes \FF$) descend to sections of $\omega^{h_\beta}$ over $X'_K$, and so also do the stratifications they define.  For $T \subset \BB$, we denote the resulting closed subscheme of $X'_K$ by $X'_{K,T}$, or simply $X'_T$ when $K$ is clear from the context.

Similarly, the line bundles $\wedge^2{\mathcal H}_\beta$ on $\widetilde{X}_K \otimes \FF$ descend to ones on $X_K'$, which we denote by $\delta_\beta$.  Note though that the canonical trivialization defined by $\epsilon_\beta$ does {\em not} descend to $X_K'$. However, we have canonical isomorphisms $\wedge^2{\mathcal H}_{\sigma^{-1}\beta} \stackrel{\sim}{\to} \wedge^2{\mathcal H}_\beta$, descending to isomorphisms $\delta_{\sigma^{-1}\beta}^p \stackrel{\sim}{\lra} \delta_\beta$ for each $\beta \in \BB$ (see for example~\cite[\S3.2.2]{DKS}).  Writing $\delta^\kappa$ for the line bundle $\otimes_{\beta\in\BB}\delta_\beta^{k_\beta}$, where $\kappa = \sum k_\beta e_\beta \in \ZZ^\BB$, we thus obtain a canonical trivialization of $\delta^{h_\beta}$ for each $\beta$.
It follows that $\delta^\kappa$ depends only on the character $(\CO_F/p)^\times \to \FF^\times$ induced by $\kappa$.  More precisely, let $\Delta$ denote the group of characters $(\CO_F/p)^\times \to \FF^\times$, and consider the surjective homomorphism $\ZZ^{\BB} \to \Delta$ defined by $\kappa \to \overline{\kappa}$ (induced by $e_\beta \mapsto \overline{\beta}$).  One sees easily that the kernel is the lattice in $\ZZ^{\BB}$ freely generated by the $h_\beta$, so that $\delta^\kappa$ depends (up to canonical isomorphism) only on $\overline{\kappa} \in \Delta$, and so we denote it $\delta^{\overline{\kappa}}$.  Examining the definition of the section ${\bf b}_\beta(T)$ on (the union over $\epsilon$ of the) $X_T$, we see that it descends to a trivialization of $\delta^{-\overline{\beta}}\omega^{b_\beta}$ on $X_T'$, which we again denote ${\bf b}_\beta(T)$.

\subsection{Hecke actions and equivariance}
Recall from \cite[\S\S2.1.3, 3.1.1]{DKS} that if $K_1$ and $K_2$ are sufficiently small open subgroups of $\GL_2(\AA_{F}^\infty)$ as above, and $g \in \GL_2(\AA_{F}^{\infty,p})$ is such that $g^{-1}K^p_2g \subset K^p_1$, then we have a morphism $\rho_g:X_{K_2}' \to X_{K_1}'$ of schemes over $\FF$, and isomorphisms $\rho_g^*\omega_\beta \stackrel{\sim}{\to} \omega_\beta$ and $\rho_g^*\delta_\beta \stackrel{\sim}{\to} \delta_\beta$ of line bundles over $X_{K_2}$, giving rise to an action of $\GL_2(\AA_{F}^{\infty,p})$ on
$$\varinjlim_{K^p} H^0(X'_K, \delta^\xi\omega^\kappa)$$
for any $\xi \in \Delta$, $\kappa \in \ZZ^{\BB}$.  Identifying $H^0(X'_K,\delta^\xi\omega^\kappa)$ with the $K^p$ invariants under this action, we thus also obtain an action of commuting Hecke operators $T_v$ and $S_v$ on $H^0(X'_K, \delta^\xi\omega^\kappa)$ for all primes $v$ of $F$ such that $\GL_2(\CO_{F,v})\subset K^p$.  Furthermore, the partial Hasse invariants ${\bf h}_\beta$ are invariant under the action of $\GL_2(\AA_F^{\infty,p})$, so the morphisms $\rho_g$ restrict to morphisms $\rho_g:X_{K_2,T}' \to X_{K_1,T}'$ and give rise to an action of $\GL_2(\AA_{F}^{\infty,p})$ on
$$\varinjlim_{K^p} H^0(X'_{K,T}, \delta^\xi\omega^\kappa).$$
We thus also obtain Hecke operators $T_v$ and $S_v$ on $H^0(X'_{T}, \delta^\xi\omega^\kappa)$ for all $K$ and $v$ as above, commuting with each other and with the injective maps
$$H^0(X'_T, \delta^\xi\omega^\kappa) \longrightarrow
 H^0(X'_T, \delta^\xi\omega^{\kappa+h_\beta})$$
defined by multiplication by ${\bf h}_\beta$ for $\beta \not\in T$.  Similarly, the isomorphisms
$$H^0(X'_T, \delta^\xi\omega^\kappa) \stackrel{\sim}{\longrightarrow}
 H^0(X'_T, \delta^{\xi-\overline{\beta}}\omega^{\kappa+b_\beta})$$
defined by multiplication by ${\bf b}_\beta(T)$ for $\beta \in T$ are Hecke-equivariant.

We claim also that the sections ${\bf h}_{\beta}^{\beta'}(T)$ of Proposition~\ref{prop: sections} descend to Hecke-equivariant sections over $X'_T$.  Indeed it follows exactly as in \cite[\S5.3]{DKS} or \cite[\S6.5.2]{M} that the isomorphisms constructed in the proof of \cite[Thm.~5.2]{TX} are Hecke-equivariant in the following sense.  Firstly, implicit in the construction of the isomorphism $\iota_{T'}$ is a choice of isomorphism $\alpha_v:\GL_2(F_v) \stackrel{\sim}{\to} B^\times_{S(T),v}$ for all finite $v \not\in S(T)$ inducing an isomorphism $\GL_2(\AA_F^{\infty,p}) \cong (B_{S(T)}\otimes_F\AA_F^{\infty,p})^\times$. Furthermore, the isomorphisms $\alpha_v$ can be chosen (dependently on $\epsilon$) so that the level $K_T$ in the notation of \S\ref{section: morphisms} is $K'_T \cap G(\AA^{\infty,p})$, where
$$K'_{T,p} = \prod_{\gerp \in S(T)} \CO_{B,\gerp}^\times
\prod_{\gerp \in {\rm{Iw}}(T)} \alpha_{\gerp}(K_0(\gerp))
\prod_{\gerp \not\in S(T)\cup \rm{Iw}(T)} \alpha_{\gerp}(\GL_2(\CO_{F,\gp})),$$
and the union over $\epsilon$ of the isomorphisms $\iota_{T'}$ descends to an isomorphism  $$\xymatrix{X'_{T'}  \ar[r]^{\iota'_{T'}\ \ \ \ \ \ \ \  }_{\cong\ \ \ \ \ \ \ \ } & \Pi_{_{\beta \in \tilde{T}-T'}} \PP(\calV_{\beta}) \ar[r]^{\ \ \pi'_{T'}} &X_{G',K'_{T}}}  $$
for a collection of rank two vector bundles $\calV_\beta$ on the Shimura variety $X_{G',K'_T}$ of level $K'_T$ for the group $G' = B_{S(T)}^\times$.  We thus obtain morphisms of strata $\pi'_{T,T'}:X'_T \to X'_{T'}$ exactly as in \S\ref{section: morphisms}. Note also that for $K_1$, $K_2$ and $g$ as above, we have $g_T^{-1}K'^p_{2,T}g_T \subset K'^p_{1,T}$ (writing $g_T$ for $\alpha^p(g)$), and hence a morphism $\rho_{g_T}:X_{G',K'_{2,T}} \to X_{G',K'_{1,T}}$ of schemes over $\FF$.  Furthermore, the construction of the vector bundles $\calV_\beta$ yields isomorphisms $\rho_{g_T}^*\calV_\beta \stackrel{\sim}{\to} \calV_\beta$, and hence a morphism
$$ \prod_{\beta\in \widetilde{T} - T'} \PP_{X_{G',K'_{2,T}}}(\calV_\beta) \lra \prod_{\beta\in \widetilde{T} - T'} \PP_{X_{G',K'_{1,T}}} (\calV_\beta)$$
over $\rho_{g_T}$.  The argument of \cite[\S5.3]{DKS} shows that the isomorphisms $\iota_{T'}$ can be chosen (for all sufficiently small $K$) so that the resulting isomorphism $\iota'_{T'}$ is compatible with the above morphism of projectivizations for all $g \in \GL_2(\AA_F^{\infty,p})$ (such that $g^{-1}K_2 g \subset K_1$).  In particular, it follows that the morphisms $\pi'_{T,T'}$ between strata are compatible with the morphisms $\rho_g$.
Furthermore, for $\beta \in \BB - \widetilde{T}$, the line bundles $\eta_\beta$ on (the union over $\epsilon$ of) the $X_{G,K_T}$ descend to ones on $X_{G',K'_T}$ such that $\iota'^*_{T'}\pi'^*_{T'}\eta_\beta \cong \omega_\beta$, so we obtain isomorphisms $\pi'^*_{T,T'}\omega^\kappa \cong \omega^\kappa$ exactly as in Lemma~\ref{lemma: pullback} and sections
$${\bf h}_{\beta}^{\beta'}(T) \in H^0(X_T',\omega^{h_\beta^{\beta'}})$$
exactly as in Proposition~\ref{prop: sections}.  Finally, for $K_1$, $K_2$ and $g$ as above, we have canonical isomorphisms $\rho_{g_T}^*\eta_\beta \cong \eta_\beta$, which we see (as in \cite[\S5.4]{DKS} or \cite[\S6.6]{M}) are compatible under $\iota'^*_{T'}\pi'^*_{T'}\eta_\beta \cong \omega_\beta$ with the isomorphims $\rho_g^*\omega_\beta \cong \omega_\beta$.  It follows that the sections ${\bf h}_{\beta}^{\beta'}(T)$ are invariant under the action of $\GL_2(\AA_F^{\infty,p})$.

Recall that for $\beta\not\in T$, we let 
$$f_\beta(T) = \left\{\begin{array}{rl}
h^{\beta}_{\sigma^{n_\beta}\beta},& \mbox{if $\beta\not\in \widetilde{T}$};\\
-b^{\beta}_{\sigma^{n_\beta}\beta},& \mbox{if $\beta\in \widetilde{T}-T$}\end{array}\right.$$
(where slightly extend the definition of $n_\beta$ to be $|\BB_\gp|$ in the case $\BB_\gp = \widetilde{T}_\gp$).
We now define ${\bf f}_\beta(T)$ exactly as before, but as sections on $X'_T$, so that
\begin{itemize}
\item if $\beta \not\in \widetilde{T}$, then ${\bf f}_\beta(T) ={\bf h}^\beta_{\sigma^{n_\beta}\beta} \in H^0(X'_T, \omega^{f_\beta(T)})$ if $\beta \not\in \widetilde{T}$, 
\item and if $\beta \in \widetilde{T}-T$, then ${\bf f}_\beta(T) ={\bf h}^{\sigma\beta}_{\sigma^{n_\beta}\beta}{\bf b}_{\sigma\beta}(T)^{-p^{n_\beta - 1}} \in H^0(X'_T, \delta^{\sigma^{n_\beta}\overline{\beta}}\omega^{f_\beta(T)})$ if $\beta \in \widetilde{T}-T$.
\end{itemize}
Note that by the discussion above, the ${\bf f}_\beta(T)$ are invariant under the action of $\GL_2(\AA_F^{\infty,p})$.  In particular, we have the following:
\begin{proposition} \label{prop: Hasse-like} Suppose that $K = K_pK^p$ is a sufficiently small open subgroup of $\GL_2(\AA_F^{\infty})$ such that $K_p = \GL_2(\CO_{F,p})$, and $v$  is a prime of $F$ such that $\GL_2(\CO_{F,v}) \subset K^p \subset \GL_2(\AA_F^{\infty,p})$.  If $\beta \not\in T \subset \BB$, then
\begin{enumerate}
\item $T_v{\bf f}_\beta(T) = ({\rm Nm}_{F/\QQ}v + 1) {\bf f}_\beta(T)$ and $S_v {\bf f}_\beta(T) = {\bf f}_\beta(T)$,
\item the map 
$$H^0(X'_T,\delta^\xi\omega^\kappa) \stackrel{f_\beta(T)\cdot}{\lra} H^0(X'_T,\delta^{\xi'}\omega^{\kappa+f_\beta(T)})$$
commutes with the operators $T_v$ and $S_v$,
where $\xi' = \xi$ or $\xi + \sigma^{n_\beta}\overline{\beta}$ according to whether or not $\beta\not\in\widetilde{T}$. 
\end{enumerate}
\end{proposition}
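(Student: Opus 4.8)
The plan is to deduce Proposition~\ref{prop: Hasse-like} directly from the $\GL_2(\AA_F^{\infty,p})$-invariance of the sections ${\bf f}_\beta(T)$ that was established in the preceding discussion. The two assertions are formal consequences of that invariance together with the definitions of the Hecke operators $T_v$ and $S_v$ in terms of the $\rho_g$-maps, so the proof should be short; the real content is already in the construction of the ${\bf f}_\beta(T)$ on $X'_T$ and the compatibilities $\rho_g^*\omega_\beta\cong\omega_\beta$, $\rho_g^*\delta_\beta\cong\delta_\beta$, $\rho_{g_T}^*\eta_\beta\cong\eta_\beta$.

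First I would recall the explicit recipe for $T_v$ and $S_v$. For a prime $v$ with $\GL_2(\CO_{F,v})\subset K^p$, these operators on $H^0(X'_K,\delta^\xi\omega^\kappa)$ are given (as in \cite[\S\S2.1.3,3.1.1]{DKS}) as sums/compositions of pullback and trace maps along the correspondence $X'_{K'}\rightrightarrows X'_K$ attached to suitable $g$'s (the double coset $K^p\,\mathrm{diag}(1,\varpi_v)\,K^p$ for $T_v$, and $\mathrm{diag}(\varpi_v,\varpi_v)$ for $S_v$), using the canonical isomorphisms $\rho_g^*\omega^\kappa\cong\omega^\kappa$, $\rho_g^*\delta^\xi\cong\delta^\xi$ to identify sections. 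The section ${\bf f}_\beta(T)\in H^0(X'_T,\delta^{\xi_0}\omega^{f_\beta(T)})$ (with $\xi_0=0$ or $\sigma^{n_\beta}\overline\beta$) lives in a space on which the same formalism applies, since the $\rho_g$ restrict to the strata $X'_{K,T}$ and the identifications $\rho_g^*\omega_\beta\cong\omega_\beta$, $\rho_g^*\delta_\beta\cong\delta_\beta$ carry ${\bf f}_\beta(T)$ on the source stratum to ${\bf f}_\beta(T)$ on the target stratum — this is exactly the invariance statement proved above.

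For part (1): because $\rho_g^*{\bf f}_\beta(T)={\bf f}_\beta(T)$ for every $g$ appearing in the correspondence defining $T_v$, the pullback of ${\bf f}_\beta(T)$ under each of the $\mathrm{Nm}_{F/\QQ}v+1$ degeneracy maps is again ${\bf f}_\beta(T)$, and applying the trace map then just multiplies by the degree; summing over the $\mathrm{Nm}_{F/\QQ}v+1$ components gives $T_v{\bf f}_\beta(T)=(\mathrm{Nm}_{F/\QQ}v+1){\bf f}_\beta(T)$. (Equivalently, invoke the standard computation that an Eisenstein/everywhere-unramified class — here a trivialization-type section pulled back from a Hasse invariant — has $T_v$-eigenvalue $\mathrm{Nm}_{F/\QQ}v+1$.) For $S_v$, the single map $\rho_{\mathrm{diag}(\varpi_v,\varpi_v)}$ is an isomorphism on $X'_{K,T}$, compatible with the scaling action, and fixes ${\bf f}_\beta(T)$ by invariance, so $S_v{\bf f}_\beta(T)={\bf f}_\beta(T)$. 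For part (2): multiplication by ${\bf f}_\beta(T)$ is a map of sheaves that is compatible with all the $\rho_g^*$ (again by invariance of ${\bf f}_\beta(T)$ and naturality of the isomorphisms $\rho_g^*\omega^\kappa\cong\omega^\kappa$, $\rho_g^*\delta^\xi\cong\delta^\xi$ under tensor product), hence commutes with the pullback and trace maps building $T_v$ and $S_v$; one also checks the target character is $\xi'=\xi+\xi_0$ with $\xi_0$ as stated, directly from the weight of ${\bf f}_\beta(T)$.

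The only mild obstacle is bookkeeping: one must make sure the degeneracy maps of the Hecke correspondence genuinely restrict to the strata $X'_{K,T}$ (which holds because the ${\bf h}_\beta$, and hence the stratification, are $\GL_2(\AA_F^{\infty,p})$-invariant, as noted in \S\ref{section: GL2}) and that the canonical isomorphisms $\rho_g^*\delta_\beta\cong\delta_\beta$ are compatible with the isomorphisms $\delta_{\sigma^{-1}\beta}^p\cong\delta_\beta$ used to define $\delta^{\overline\kappa}$ — both are recorded in \cite[\S3.2.2]{DKS}. Granting these, the proposition is immediate, so I would keep the written proof to a few lines: invoke the established invariance of ${\bf f}_\beta(T)$, cite the definition of the Hecke operators, and conclude (1) and (2) formally.
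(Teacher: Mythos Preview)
Your proposal is correct and takes essentially the same approach as the paper: the paper simply notes that the ${\bf f}_\beta(T)$ are invariant under the action of $\GL_2(\AA_F^{\infty,p})$ and then states the proposition with ``In particular, we have the following,'' giving no further proof. Your write-up spells out the formal deduction of (1) and (2) from this invariance in more detail than the paper does, but the idea is identical.
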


\subsection{Cones of weights}
Fix $T \subset \BB$ and a sufficiently small open compact subgroup $K$ of $\GL_2(\AA_F^\infty)$ containing $\GL_2(\CO_{F,p})$.  Consider the cone of weights 
$$C(X'_T,\delta^\bullet\omega^\bullet)= \{\,(\lambda,\kappa) \in \ZZ^\BB \times \ZZ^\BB\,:\, H^0(X_T',\delta^\lambda\omega^\kappa)\neq 0 \,\}$$
and let $\mathcal{C}(X'_T,\delta^\bullet\omega^\bullet)$ be its $\QQ^{\ge 0}$-saturation.  We have the following immediate consequence of Theorem~\ref{thm:main} and the discussion above.
\begin{cor} \label{cor:main} We have $\mathcal{C}(X'_T,\delta^\bullet\omega^\bullet)= \QQ^{\BB} \times \mathcal{D}_T$.
\end{cor}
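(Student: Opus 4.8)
The plan is to reduce the statement to Theorem~\ref{thm:main} (which computes $\calC_T = \calD_T$ for the PEL varieties $X_T$) by using the finite \'etale cover $\coprod_\epsilon X_{K}^\epsilon \to X'_K$ together with the descent data for $\omega_\beta$ and $\delta_\beta$ discussed in \S\ref{section: GL2}. First I would observe that, since $X'_T = (\widetilde{X}_{K,T}/\CO_{F,(p),+}^\times)\otimes\FF$ and $\widetilde{X}_{K,T}$ is (the union over $\epsilon$ of) the $X_{K,T}^\epsilon$, a nonzero section of $\delta^\lambda\omega^\kappa$ on $X'_T$ pulls back to a nonzero section of $\omega^\kappa$ on each $X_T^\epsilon$ (the pullback of $\delta_\beta$ to $X_T^\epsilon$ being canonically trivial via $\epsilon_\beta$). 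Hence $H^0(X'_T,\delta^\lambda\omega^\kappa)\neq 0$ forces $\kappa \in C_T$, which by Theorem~\ref{thm:main} gives the inclusion $\calC(X'_T,\delta^\bullet\omega^\bullet) \subseteq \QQ^\BB \times \calD_T$.

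For the reverse inclusion I would exhibit enough nonzero sections. On the $\omega$-side, the sections ${\bf f}_\beta(T) \in H^0(X'_T,\delta^{\xi_\beta}\omega^{f_\beta(T)})$ and ${\bf b}_\beta(T)^{\pm1} \in H^0(X'_T,\delta^{\mp\overline\beta}\omega^{\pm b_\beta})$ constructed in \S\ref{section: GL2} are nonzero, so by Theorem~\ref{thm:main} (which says $\calD_T = \langle f_\beta(T):\beta\in\BB-T\rangle + \langle\pm b_\beta:\beta\in T\rangle$) every $\kappa\in\calD_T$ is (a positive rational multiple of) a weight for which some $\delta^\xi\omega^\kappa$ has a nonzero section. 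On the $\delta$-side, the key point is that $\delta^\kappa$ depends only on $\overline\kappa\in\Delta$ and that $\delta^{h_\beta}$ is canonically trivial; combined with the partial Hasse invariants ${\bf h}_\beta \in H^0(X'_T,\omega^{h_\beta})$ and the above sections, one can adjust the $\delta$-component freely: given any $(\lambda,\kappa)$ with $\kappa\in\calD_T$, multiply a section of the appropriate $\delta^{\xi}\omega^{\kappa}$ by a suitable monomial in the ${\bf h}_\beta(T)$, ${\bf b}_\beta(T)^{\pm1}$ to reach a section of $\delta^{\lambda}\omega^{\kappa'}$ with $\kappa'$ in the same $\QQ^{\ge0}$-ray as $\kappa$ — here one uses that the ${\bf h}_\beta$ move $\delta$ trivially while the ${\bf b}_\beta(T)^{\pm1}$ move it by $\mp\overline\beta$, and that $\{\overline\beta:\beta\in\BB\}$ spans $\Delta$ (indeed $\overline{h_\beta}=0$ while the $\overline\beta$ generate). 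Taking $\QQ^{\ge0}$-saturations then yields $\QQ^\BB\times\calD_T \subseteq \calC(X'_T,\delta^\bullet\omega^\bullet)$.

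The main obstacle I anticipate is bookkeeping the $\delta$-components: one must check that the available sections (the ${\bf h}_\beta$, the ${\bf b}_\beta(T)^{\pm1}$, and the ${\bf f}_\beta(T)$) generate, under multiplication and up to rescaling the $\omega$-weight within its ray, the full lattice $\ZZ^\BB$ of $\delta$-weights while keeping the $\omega$-weight inside $\calD_T$ — equivalently, that the projection of the cone of available $(\lambda,\kappa)$ onto the $\delta$-factor is all of $\QQ^\BB$. This is a finite linear-algebra verification using $\overline{h_\beta}=0$, $\overline{b_\beta}=\overline\beta$, and the fact that the map $\ZZ^\BB\to\Delta$, $\kappa\mapsto\overline\kappa$, is surjective with kernel spanned by the $h_\beta$; the nonvanishing of all the requisite sections is exactly what Theorem~\ref{thm:main} and \S\ref{section: GL2} provide, so no new geometry is needed.
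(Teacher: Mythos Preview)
Your approach is the same as the paper's, and the first inclusion is essentially identical (one small correction: the pull-back of a nonzero ${\bf f}$ is nonzero on \emph{some} $X_T^\epsilon$, not necessarily on each one --- but that is all you need).

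The reverse inclusion, however, has a gap as you have written it.  You propose to adjust the $\delta$-component by multiplying by monomials in the ${\bf h}_\beta(T)$ and ${\bf b}_\beta(T)^{\pm 1}$, noting that the former move $\delta$ trivially and the latter by $\mp\overline\beta$.  But ${\bf b}_\beta(T)$ is only defined for $\beta\in T$, so the $\delta$-shifts you can realize this way lie in the span of $\{e_\beta:\beta\in T\}$, which is not all of $\QQ^\BB$ when $T\subsetneq\BB$.  Your appeal to ``$\{\overline\beta:\beta\in\BB\}$ spans $\Delta$'' does not help, since you have no invertible section with $\delta$-weight $\pm e_\beta$ for $\beta\notin T$.

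The fix is precisely the ingredient you mention but do not exploit: the canonical trivialization of $\delta^{h_\beta}$ is itself a nowhere-vanishing section of $\delta^{h_\beta}\omega^0$ on $X'_T$, so $\pm(h_\beta,0)\in C(X'_T,\delta^\bullet\omega^\bullet)$ for \emph{every} $\beta\in\BB$.  Since $\{h_\beta:\beta\in\BB\}$ is a $\QQ$-basis of $\QQ^\BB$, this immediately gives $\QQ^\BB\times\{0\}\subset\mathcal{C}(X'_T,\delta^\bullet\omega^\bullet)$, and then the sections ${\bf f}_\beta(T)$ and ${\bf b}_\beta(T)^{\pm1}$ supply the generators of $\{0\}\times\mathcal{D}_T$ (up to $\delta$-shifts, which you can now kill).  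This is exactly how the paper argues: it simply lists the weights $\pm(h_\beta,0)$, $\pm(-e_\beta,b_\beta)$ for $\beta\in T$, $(0,f_\beta(T))$ for $\beta\notin\widetilde{T}$, and $(e_{\sigma^{n_\beta}\beta},f_\beta(T))$ for $\beta\in\widetilde{T}-T$, and observes that these generate $\QQ^\BB\times\mathcal{D}_T$.  No ``bookkeeping'' or ray-adjustment is needed once you use the $\delta^{h_\beta}$-trivializations directly.
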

\begin{proof} Suppose that $0 \neq {\bf f} \in H^0(X_T',\delta^\lambda\omega^\kappa)\neq 0 \,\}$.
For each $\epsilon \in (\AA_F^{\infty,p})^\times$, let 
$${\bf f}^\epsilon \in H^0(X^\epsilon_T, \delta^{\lambda}\omega^{\kappa}) \cong H^0(X^\epsilon_T, \omega^{\kappa})$$
 denote the pull-back of ${\bf f}$ via the finite \'etale morphism $X_T^\epsilon \to X_T'$ (where $X_T^\epsilon$ is the closed subscheme of $X_K^\epsilon\otimes\FF$ defined by the vanishing of the ${\bf h}_\beta$ for $\beta\in T$). 
Since ${\bf f} \neq 0$, we have ${\bf f}^\epsilon \neq 0$ for some $\epsilon$, and hence $\kappa \in \mathcal{C}_T = \mathcal{D}_T$ by Theorem~\ref{thm:main}.  It follows that $\mathcal{C}(X'_T,\delta^\bullet\omega^\bullet)\subset \QQ^{\BB} \times \mathcal{D}_T$.

For the opposite inclusion, note that $C(X'_T,\delta^\bullet\omega^\bullet)$ contains each of the following:
\begin{itemize}
\item $\pm(h_\beta,0)$ for all $\beta\in \BB$ (from the canonical trivializations of $\delta^{h_\beta}$);
\item $\pm(-e_\beta,b_\beta)$ for all $\beta \in T$ (from the trivializations ${\bf b}_\beta(T)$);
\item $(0,f_\beta(T))$ for all $\beta\notin \widetilde{T}$ (from the sections ${\bf f}_\beta(T)$);
\item $(e_{\sigma^{n_\beta}\beta},f_\beta(T))$ for all $\beta\in \widetilde{T} - T$ (from the sections ${\bf f}_\beta(T)$).
\end{itemize}
Since the $\QQ^\BB = \langle h_\beta : \beta \in \BB \rangle$ and
$\mathcal{D}_T = \langle f_\beta(T) : \beta \not\in T \rangle
 + \langle \pm b_\beta : \beta \in T \rangle$, it follows that 
$ \QQ^{\BB} \times \mathcal{D}_T\subset \mathcal{C}(X'_T,\delta^\bullet\omega^\bullet)$.
\end{proof}

\begin{cor} \label{cor:vanishing} Suppose that $\xi \in \Delta$ and $\kappa \in \ZZ^{\BB}$.  If $\kappa\not\in \mathcal{D}_T$, then $H^0(X'_T,\delta^\xi\omega^\kappa) = 0$.
\end{cor}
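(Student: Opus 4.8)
The plan is to deduce this directly from Corollary~\ref{cor:main}. Suppose for contradiction that $\kappa \notin \mathcal{D}_T$ but $H^0(X'_T,\delta^\xi\omega^\kappa) \neq 0$, so there is a nonzero section ${\bf f} \in H^0(X'_T,\delta^\xi\omega^\kappa)$. Choose any $\lambda \in \ZZ^{\BB}$ mapping to $\xi$ under the surjection $\ZZ^{\BB} \to \Delta$ of \S\ref{section: GL2}; since $\delta^\lambda$ depends only on $\overline{\lambda} = \xi$, we have $\delta^\lambda \cong \delta^\xi$ canonically, so ${\bf f}$ is also a nonzero element of $H^0(X'_T,\delta^\lambda\omega^\kappa)$. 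Hence $(\lambda,\kappa) \in C(X'_T,\delta^\bullet\omega^\bullet) \subseteq \mathcal{C}(X'_T,\delta^\bullet\omega^\bullet)$, which equals $\QQ^{\BB}\times\mathcal{D}_T$ by Corollary~\ref{cor:main}. This forces $\kappa \in \mathcal{D}_T$, contradicting the assumption.

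First I would recall the precise meaning of $\delta^\xi$ for $\xi \in \Delta$: by the discussion preceding the definition of the Hecke action, the kernel of $\ZZ^\BB \to \Delta$ is freely generated by the $h_\beta$, and $\delta^{h_\beta}$ is canonically trivial, so $\delta^\xi$ is well-defined up to canonical isomorphism and agrees with $\delta^\lambda$ for any lift $\lambda$ of $\xi$. This is the only slightly delicate bookkeeping point, and it is already established in the excerpt, so the argument is essentially immediate.

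Thus the actual content is entirely contained in Corollary~\ref{cor:main} (which in turn rests on Theorem~\ref{thm:main}); the present corollary is just the contrapositive statement about vanishing of individual cohomology groups, reorganized so as not to mention the extra weight $\lambda$. There is no real obstacle: the only thing to be careful about is to phrase the lift of $\xi$ correctly and to note that nonvanishing of $H^0(X'_T,\delta^\xi\omega^\kappa)$ puts $(\lambda,\kappa)$ into the integral cone $C(X'_T,\delta^\bullet\omega^\bullet)$, hence into its saturation. I would write this as a two-sentence proof.

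\begin{proof} Choose $\lambda \in \ZZ^{\BB}$ with $\overline{\lambda} = \xi$, so that $\delta^\xi \cong \delta^\lambda$ canonically.  If $H^0(X'_T,\delta^\xi\omega^\kappa) \neq 0$, then $(\lambda,\kappa) \in C(X'_T,\delta^\bullet\omega^\bullet) \subseteq \mathcal{C}(X'_T,\delta^\bullet\omega^\bullet) = \QQ^{\BB}\times\mathcal{D}_T$ by Corollary~\ref{cor:main}, so $\kappa \in \mathcal{D}_T$.
\end{proof}
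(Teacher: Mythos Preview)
Your proof is correct and matches the paper's approach exactly: the paper simply writes ``This is immediate from Corollary~\ref{cor:main},'' and your argument spells out precisely why. The only extra care you take---choosing a lift $\lambda$ of $\xi$ and identifying $\delta^\xi$ with $\delta^\lambda$---is the natural unpacking of that one-line proof.
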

\begin{proof} This is immediate from Corollary~\ref{cor:main}.
\end{proof}

We may also define a stratification on $X'_T$ as in \S\ref{subsection: stratification}: For $J \subset \BB - T$, we let
$$X'_{T,J}=X'_T \cap \big(\bigcap_{\beta \in J} Z({\bf f}_\beta(T))\big),\quad\mbox{and}\quad
W'_{T,J}=X'_{T,J}-{\bigcup}_{\beta \not \in J} X'_{T,\{\beta\}}.$$
Since the line bundles $\delta_\beta$ are torsion, the same proofs as before carry over to give the following: 
\begin{prop} \label{prop:strata'}
For each $J \subset \BB - T$, we have
\begin{enumerate}
\item $W'_{T,J}$ is non-empty, quasi-affine and equidimensional of dimension $\BB - (T \cup J)$;
\item $X'_{T,J} = \overline{W}'_{T,J} = \bigcup_{J' \supset J} W'_{T,J'}$.
\end{enumerate}
\end{prop}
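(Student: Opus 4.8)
The plan is to follow the arguments of \S\ref{subsection: stratification} essentially verbatim, the sole new input being that the line bundles $\delta_\beta$ on $X'$ are torsion. Indeed, as noted above, $\delta^\lambda$ depends (up to canonical isomorphism) only on the induced character $\overline{\lambda}\in\Delta$, and $\Delta$ is finite, so a bounded tensor power of each $\delta_\beta$ is trivial. Consequently the analogue of Proposition~\ref{prop: torsion} holds on $X'_T$: the sections ${\bf h}_\beta(T)$, ${\bf b}_\beta(T)$ and ${\bf f}_\beta(T)$ descend to $X'_T$ (as explained earlier in this section), and the isomorphisms between powers of the $\omega_\beta$ that they induce on the loci $X'_T\cap Z({\bf f}_\beta(T))$ and $X'_T-Z({\bf f}_\beta(T))$ are given by the same formulas as on $X_T$, possibly twisted by a torsion power of some $\delta_\beta$, which does not affect any of the arguments that follow.

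I would then prove the quasi-affineness in (1) exactly as in Proposition~\ref{prop: quasi-affine}. Combining the isomorphisms above with Definition~\ref{dfn: b} shows that every $\omega_\beta$, and hence $\bigotimes_{\beta\in\BB}\omega_\beta$, restricts to a torsion line bundle on $W'_{T,J}$. On the other hand $\bigotimes_{\beta\in\BB}\omega_\beta$ is ample on $X'$: this descends from its ampleness on each $X_K^\epsilon\otimes\FF$ along the finite surjective morphism $\coprod_\epsilon X_K^\epsilon\otimes\FF\to X'$. Hence its restriction to $W'_{T,J}$ is ample and torsion, which forces $\calO_{W'_{T,J}}$ to be ample; since $W'_{T,J}$ is quasi-compact it is therefore quasi-affine.

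For the remaining assertions I would reduce to the already-established case of $X$. The finite \'etale surjection $q:\coprod_\epsilon X_T^\epsilon\to X'_T$ (where $X_T^\epsilon\subset X_K^\epsilon\otimes\FF$ is the corresponding stratum) pulls the descended sections ${\bf f}_\beta(T)$ on $X'_T$ back to the sections ${\bf f}_\beta(T)$ on each $X_T^\epsilon$; therefore $q^{-1}(X'_{T,J})=\coprod_\epsilon X_{T,J}^\epsilon$ and $q^{-1}(W'_{T,J})=\coprod_\epsilon W_{T,J}^\epsilon$. Since $q$ is finite \'etale and surjective it is open, closed, and preserves local dimension, hence compatible with closures and able to detect non-emptiness, equidimensionality and the dimension count. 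Non-emptiness of $W'_{T,J}$, the identities $X'_{T,J}=\overline{W}'_{T,J}=\bigcup_{J'\supset J}W'_{T,J'}$, and equidimensionality of dimension $|\BB-(T\cup J)|$ then follow from Proposition~\ref{prop:strata} (and Corollary~\ref{cor: intersection}) applied to each $X^\epsilon=X_K^\epsilon\otimes\FF$. Alternatively, one can repeat the proofs of Corollary~\ref{cor: intersection} and Proposition~\ref{prop:strata} directly on $X'_T$, using that $X'_T$ is proper when $T\neq\emptyset$, since, as in \S\ref{section: morphisms} and the constructions above, $X'_T$ is a $(\PP^1)^{|\widetilde{T}-T|}$-bundle over the quaternionic Shimura variety $X_{G',K'_T}$, which is proper because $S(T)\supseteq T$ contains an infinite place, together with Corollary~5.7 of \cite{DK2} for the case $T=\emptyset$.

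The only point requiring care is to ensure that passing from $X$ to $X'$ does not disturb the ample-versus-torsion dichotomy underlying the quasi-affineness argument; this is precisely what the torsion-ness of the $\delta_\beta$ guarantees. Everything else is a formal transfer of the corresponding statements over $X$, either through the finite \'etale cover or by rerunning the same proofs.
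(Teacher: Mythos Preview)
Your proposal is correct and matches the paper's approach: the paper simply remarks that since the $\delta_\beta$ are torsion, the proofs of \S\ref{subsection: stratification} carry over verbatim, which is precisely what you spell out (with the additional, helpful, observation that one can alternatively transfer the results directly along the finite \'etale cover $\coprod_\epsilon X_T^\epsilon \to X'_T$).
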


 Finally, we translate our results on minimal weights to the setting of $\Res_{F/\QQ} \GL_2$.  Suppose that $\kappa \in \ZZ^{\BB}$, $\xi \in \Delta$, $0 \neq {\bf f} \in H^0(X'_T,\delta^\xi\omega^\kappa)$, and for each $\beta \not\in T$, we let $a_\beta$ be the maximal power of ${\bf f}_\beta(T)$ that divides $\bf f$ (which again exists by \cite[Lemma~7.2]{DK2}).   
By part (1) of Proposition \ref{prop:strata'}, the strata $X'_T \cap Z({\bf f}_\beta(T))$ and $X'_T \cap Z({\bf f}_{\beta'}(T))$ have no common irreducible components for distinct $\beta,\beta'\not\in T$, so ${\bf f}$ is divisible by $\prod_{\beta \in \BB(T)} {\bf f}_\beta(T)^{a_\beta}$. We can therefore write 
$${\bf f} ={\bf f}_0\prod_{\beta \not\in T} {\bf f}_\beta(T)^{a_\beta}$$ 
for some ${\bf f}_0 \in H^0(X'_T,\delta^{\xi_0}\omega^{\kappa_0})$,
where $\xi_0 = \xi - \sum_{\beta\in \widetilde{T}-T} a_\beta\sigma^{n_\beta}\overline{\beta}$, $\kappa_0 = \kappa - \sum_{\beta\not\in T} a_\beta f_\beta(T)$, and ${\bf f}_0$ is not divisible by ${\bf f}_\beta(T)$ for any $\beta \not\in T$.

\begin{dfn}  For $0 \neq {\bf f} \in H^0(X'_T,\delta^\xi\omega^\kappa)$, we define the {\em minimal weight} of ${\bf f}$ to be
\[
\Phi({\bf f})=i_T(\kappa_0) \in \ZZ^{\BB - T},
\]
where $i_T: \QQ^{\BB} \to \QQ^{\BB - T}$ is defined in (\ref{equation: L-beta}), so in particular, $\Phi({\bf f}) \in i_T(\calC_T) = \calC_T^{T-\rm red}$.
\end{dfn}

\begin{prop} \label{prop: min weight'} If $0 \neq {\bf f} \in H^0(X'_T,\delta^\xi\omega^\kappa)$, then $\Phi({\bf f}) \in \calC^{\rm min}_T$.
\end{prop}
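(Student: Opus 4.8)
The plan is to follow the proof of Proposition~\ref{prop: min weight} essentially verbatim, the one new ingredient being the $\Res_{F/\QQ}\GL_2$ analogue of the divisibility statement of Corollary~\ref{cor: divisibility}. By the discussion preceding the statement we may write ${\bf f} = {\bf f}_0\prod_{\beta\not\in T}{\bf f}_\beta(T)^{a_\beta}$ with $0 \neq {\bf f}_0 \in H^0(X'_T,\delta^{\xi_0}\omega^{\kappa_0})$ not divisible by any ${\bf f}_\beta(T)$ and $\Phi({\bf f}) = i_T(\kappa_0) \in \calC_T^{T-{\rm red}}$. Since ${\bf f}_0 \neq 0$, Corollary~\ref{cor:vanishing} gives $\kappa_0 \in \calD_T = \calC_T$. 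Because $\ker(i_T) = \langle \pm b_\beta : \beta \in T\rangle$ is contained in $\calC_T \cap \bigcap_{\beta\in\BB(T)}\calD_{T,f_\beta}$, we have $i_T^{-1}(\calC^{\rm min}_T) = \calC_T \cap \bigcap_{\beta\in\BB(T)}\calD_{T,f_\beta}$; so if $\Phi({\bf f}) \not\in \calC^{\rm min}_T$ then, since $\kappa_0 \in \calC_T$, there is some $\beta \in \BB(T)$ with $\kappa_0 \not\in \calD_{T,f_\beta}$, i.e.\ $L_{T,f_\beta,\tau}(\kappa_0) < 0$ for some $\tau \in \BB - (T\cup\{\sigma^{n_\beta}\beta\})$.

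To reach a contradiction I would prove the primed form of Corollary~\ref{cor: divisibility}: under these hypotheses, ${\bf f}_0$ is divisible by ${\bf f}_\beta(T)$ on $X'_T$. This rests on the primed analogue of Theorem~\ref{thm: general cones}(1)(b), namely that $H^0(X'_T \cap Z({\bf f}_\beta(T)), \delta^{\xi}\omega^{\kappa}) = 0$ whenever $\beta \in \BB(T)$ and $\kappa \not\in \calD_{T,f_\beta}$; this follows from the PEL statement by the same finite-\'etale-descent argument used for Corollary~\ref{cor:main}. Indeed $\coprod_\epsilon X_T^\epsilon \to X'_T$ is finite \'etale and restricts to a finite \'etale cover of $X'_T \cap Z({\bf f}_\beta(T))$ by the $X_T^\epsilon \cap Z({\bf f}_\beta(T))$; on each $X_T^\epsilon$ the line bundle $\delta^{\xi}$ is trivial (the $\wedge^2\calH_\beta$ being canonically trivialized there by the $\epsilon_\beta$), so a nonzero section of $\delta^\xi\omega^\kappa$ on $X'_T \cap Z({\bf f}_\beta(T))$ would pull back to a nonzero section of $\omega^\kappa$ on some $X_T^\epsilon \cap Z({\bf f}_\beta(T))$, forcing $\kappa \in \calD_{T,f_\beta}$ by Theorem~\ref{thm: general cones}(1)(b). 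Applying this with $(\xi,\kappa) = (\xi_0,\kappa_0)$ shows the restriction of ${\bf f}_0$ to $X'_T \cap Z({\bf f}_\beta(T))$ vanishes; since, by Proposition~\ref{prop:strata'}(1), $Z({\bf f}_\beta(T)) = X'_{T,\{\beta\}}$ is an effective Cartier divisor in the (smooth, hence reduced) scheme $X'_T$ containing no irreducible component, ${\bf f}_\beta(T)$ is a nonzerodivisor and ${\bf f}_0$ is therefore divisible by ${\bf f}_\beta(T)$, contradicting the choice of ${\bf f}_0$.

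I do not anticipate a serious obstacle: the combinatorial and geometric content is already present in the PEL treatment (Theorem~\ref{thm: general cones} through Corollary~\ref{cor: divisibility}), and the only points needing care are the routine descent of the vanishing statement along $\coprod_\epsilon X_T^\epsilon \to X'_T$, using that $\delta^\bullet$ trivializes upstairs, together with the standard passage from vanishing on $Z({\bf f}_\beta(T))$ to divisibility, which needs only that ${\bf f}_\beta(T)$ is a regular section — guaranteed by Proposition~\ref{prop:strata'}(1). An alternative, perhaps cleaner, bookkeeping is to first transcribe Theorem~\ref{thm: general cones} and Corollary~\ref{cor: divisibility} wholesale to the primed setting (just as Theorem~\ref{thm:main} was transcribed in Corollary~\ref{cor:main}), after which the present proof is word-for-word that of Proposition~\ref{prop: min weight}.
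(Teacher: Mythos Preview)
Your proposal is correct and follows essentially the same approach as the paper: both reduce to the PEL setting via the finite \'etale cover $\coprod_\epsilon X_T^\epsilon \to X'_T$ (using that $\delta^\bullet$ trivializes upstairs) and invoke the PEL result to force divisibility of ${\bf f}_0$ by ${\bf f}_\beta(T)$. The only cosmetic difference is that the paper pulls back ${\bf f}_0$ itself, applies Corollary~\ref{cor: divisibility} on each $X_T^\epsilon$, and then observes that the quotient ${\bf f}_1 = {\bf f}_0/{\bf f}_\beta(T)$ descends (since both ${\bf f}_0$ and ${\bf f}_\beta(T)$ do), whereas you first transcribe the vanishing statement of Theorem~\ref{thm: general cones}(1)(b) to $X'_T$ and then carry out the divisibility step directly there; these are interchangeable packagings of the same argument.
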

 \begin{proof} Write ${\bf f}={\bf f}_0\prod_{\beta \not\in T} {\bf f}_\beta(T)^{a_\beta}$ as above, so ${\bf f}_0 \in H^0(X'_T,\delta^{\xi_0}\omega^{\kappa_0})$ and $i_T(\kappa_0)=\Phi({\bf f})$.  As in the proof of Proposition~\ref{prop: min weight}, if $\Phi({\bf f})\not \in \calC^{\rm min}_T$, then $L_{T,f_\beta,\tau}(\kappa_0)<0$ for some $\beta \in \BB(T)$ and  $\tau \in \BB-(T \cup\{\sigma^{n_\beta}\beta\})$.

For each $\epsilon \in (\AA_F^{\infty,p})^\times$, let ${\bf f}_0^\epsilon \in H^0(X^\epsilon_T, \delta^{\xi_0}\omega^{\kappa_0})$ denote the pull-back of ${\bf f}_0$ to $X_T^\epsilon$ (as in the proof of Corollary~\ref{cor:main}).  Since $L_{T,f_\beta,\tau}(\kappa_0)<0$, it follows from Corollary \ref{cor: divisibility} that ${\bf f}_0^\epsilon$ is divisible by (the pull-back of) ${\bf f}_\beta(T)$. Since this holds for all $\epsilon$, it follows that the pull-back of ${\bf f}_0$ to $\coprod_\epsilon X_T^\epsilon$ has the form ${\bf f}_1{\bf f}_\beta(T)$ for some 
$${\bf f}_1 \in H^0(\coprod_\epsilon X^\epsilon_T, \delta^{\xi_1}\omega^{\kappa_1}),$$
where $\xi_1 = \xi$ or $\xi_0 - \sigma^{n_\beta}\overline{\beta}$ and $\kappa_1 = \kappa_0 - f_\beta(T)$.  Since ${\bf f}_0$ and ${\bf f}_\beta(T)$ descend to sections over $X'_T$, it follows that so does ${\bf f}_1$, contradicting the definition of ${\bf f}_0$.
 \end{proof}

Combining Propositions~\ref{prop: Hasse-like} and~\ref{prop: min weight} immediately gives the following:
\begin{cor} Let $S$ be a set of primes containing all $v|p$ and all $v$ such that $\GL_2(\CO_{F,v}) \not\subset K$.  Suppose that ${\bf f} \in H^0(X'_T,\delta^\xi\omega^\kappa)$ is an eigenform for the Hecke operators $T_v$ and $S_v$ for all $v \not\in S$. Then there is an eigenform ${\bf f}' \in H^0(X'_T,\delta^{\xi'}\omega^{\kappa'})$ with the same eigenvalues as ${\bf f}$ for $T_v$ and $S_v$ for all $v\not\in S$, and such that
$i_T(\kappa') \in \calC^{\rm min}_T$.
\end{cor}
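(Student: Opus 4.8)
The plan is to take for ${\bf f}'$ the primitive part ${\bf f}_0$ of ${\bf f}$ already isolated in the construction preceding the definition of $\Phi$: recall that we wrote ${\bf f} = {\bf f}_0\prod_{\beta\notin T}{\bf f}_\beta(T)^{a_\beta}$, with $a_\beta$ the largest power of ${\bf f}_\beta(T)$ dividing ${\bf f}$, so that ${\bf f}_0\in H^0(X'_T,\delta^{\xi_0}\omega^{\kappa_0})$ is not divisible by any ${\bf f}_\beta(T)$, and by Proposition~\ref{prop: min weight'} we have $i_T(\kappa_0)=\Phi({\bf f})\in\calC^{\rm min}_T$. Thus the only thing left to prove is that ${\bf f}_0$ is again a Hecke eigenform, with the same $T_v$- and $S_v$-eigenvalues as ${\bf f}$ for every $v\notin S$; then ${\bf f}':={\bf f}_0$, $\kappa':=\kappa_0$, $\xi':=\xi_0$ does the job.

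For this I would first record that multiplication by $h:=\prod_{\beta\notin T}{\bf f}_\beta(T)^{a_\beta}$ is injective on global sections: since $X'_T$ is smooth, hence reduced and equidimensional, and each $X'_T\cap Z({\bf f}_\beta(T))=X'_{T,\{\beta\}}$ is pure of codimension one in $X'_T$ by Proposition~\ref{prop:strata'}, no ${\bf f}_\beta(T)$ vanishes identically on an irreducible component of $X'_T$, so ${\bf f}_\beta(T)$ — and hence $h$ — is a non-zero-divisor. Next, fixing $v\notin S$ with $T_v{\bf f}=a_v{\bf f}$ and $S_v{\bf f}=b_v{\bf f}$, I would invoke Proposition~\ref{prop: Hasse-like}(2): multiplication by each ${\bf f}_\beta(T)$ commutes with $T_v$ and $S_v$, hence so does multiplication by $h$, viewed as a map $H^0(X'_T,\delta^{\xi_0}\omega^{\kappa_0})\to H^0(X'_T,\delta^{\xi}\omega^{\kappa})$. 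Consequently
\[
h\cdot(T_v{\bf f}_0)=T_v(h\cdot{\bf f}_0)=T_v{\bf f}=a_v{\bf f}=h\cdot(a_v{\bf f}_0),
\]
and likewise $h\cdot(S_v{\bf f}_0)=h\cdot(b_v{\bf f}_0)$; by the injectivity just noted, $T_v{\bf f}_0=a_v{\bf f}_0$ and $S_v{\bf f}_0=b_v{\bf f}_0$, as required, and $i_T(\kappa')=i_T(\kappa_0)\in\calC^{\rm min}_T$ by Proposition~\ref{prop: min weight'}.

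Since all of the genuinely geometric input has already been established, there is no substantial obstacle here — this is why the corollary is an immediate consequence of Propositions~\ref{prop: Hasse-like} and~\ref{prop: min weight'}. The one point to keep honest is that the factorization ${\bf f}={\bf f}_0\,h$ is \emph{canonical}: the exponents $a_\beta$ and the quotient ${\bf f}_0$ are unambiguously determined precisely because the divisors $X'_T\cap Z({\bf f}_\beta(T))$ share no irreducible component for distinct $\beta\notin T$ (Proposition~\ref{prop:strata'}), which, combined with the Hecke-equivariance of multiplication by the ${\bf f}_\beta(T)$, is what forces ${\bf f}_0$ to inherit the eigenform property rather than merely being some section of the smaller weight.
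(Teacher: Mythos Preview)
Your proof is correct and is precisely the argument the paper has in mind when it says the corollary follows immediately from Propositions~\ref{prop: Hasse-like} and~\ref{prop: min weight'}: take ${\bf f}' = {\bf f}_0$, use Proposition~\ref{prop: min weight'} for the minimal-cone condition, and use Proposition~\ref{prop: Hasse-like}(2) together with the injectivity of multiplication by $\prod_{\beta\notin T}{\bf f}_\beta(T)^{a_\beta}$ to transfer the Hecke eigenvalues. You have simply spelled out the details the paper leaves implicit.
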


\end{document}